\documentclass{amsart}
\usepackage{cite,geometry,amssymb,amstext,amsmath,amscd,amsthm,amsfonts,enumerate,graphicx,latexsym,stmaryrd,multicol,bm,xcolor}
\usepackage[all]{xy}
\usepackage{comment}
\newtheorem{thm}{Theorem}[section]
\newtheorem{lem}[thm]{Lemma}
\newtheorem{prop}[thm]{Proposition}
\newtheorem{cor}[thm]{Corollary}
\theoremstyle{definition}
\newtheorem{dfn}[thm]{Definition}
\newtheorem{ques}[thm]{Question}

\newtheorem{eg}[thm]{Example}

\theoremstyle{remark}
\newtheorem{rem}[thm]{Remark}

\newtheorem*{ac}{Acknowledgments}
\newtheorem*{conv}{Convention}
\numberwithin{equation}{thm}
\def\add{\operatorname{add}}
\def\ann{\operatorname{ann}}

\def\C{\mathcal{C}}
\def\c{\mathsf{C}}
\def\cm{\operatorname{CM}}
\def\codim{\operatorname{codim}}

\def\cx{\operatorname{cx}}
\def\Db{\operatorname{\mathsf{D^b}}}
\def\depth{\operatorname{depth}}
\def\Dsg{\operatorname{\mathsf{D^{sg}}}}
\def\E{\mathcal{E}}
\def\EE{\operatorname{E}}
\def\edim{\operatorname{edim}}

\def\Ext{\operatorname{Ext}}
\def\fd{\operatorname{fd}}
\def\ge{\geqslant}
\def\gor{\operatorname{Gor}}
\def\Gproj{\operatorname{\mathsf{Gproj}}}
\def\GProj{\operatorname{\mathsf{GProj}}}
\def\grade{\operatorname{grade}}

\def\height{\operatorname{ht}}
\def\Hom{\operatorname{Hom}}

\def\id{\operatorname{id}}
\def\Im{\operatorname{Im}}
\def\inj{\operatorname{inj}}
\def\ipd{\operatorname{IPD}}
\def\K{\mathrm{K}}
\def\Ktac{\mathrm{K}_{\operatorname{tac}}}
\def\L{\mathbf{L}}
\def\le{\leqslant}

\def\M{\mathcal{M}}
\def\m{\mathfrak{m}}
\def\Max{\operatorname{Max}}
\def\Mod{\operatorname{\mathsf{Mod}}}
\def\mod{\operatorname{\mathsf{mod}}}
\def\n{\mathfrak{n}}
\def\nf{\operatorname{NF}}
\def\ng{\operatorname{Ngor}}
\def\nsgr{\operatorname{Nsgr}}
\def\pd{\operatorname{pd}}
\def\Proj{\operatorname{\mathsf{Proj}}}
\def\proj{\operatorname{\mathsf{proj}}}
\def\p{\mathfrak{p}}

\def\q{\mathfrak{q}}
\def\RHom{\operatorname{\mathbf{R}Hom}}
\def\reg{\operatorname{Reg}}
\def\sing{\operatorname{Sing}}
\def\spec{\operatorname{Spec}}

\def\syz{\Omega}
\def\T{\mathcal{T}}
\def\thick{\operatorname{thick}}
\def\Tor{\operatorname{Tor}}
\def\TR{(\mathbf{tr})}
\def\U{\mathbf{U}}
\def\V{\mathrm{V}}
\def\VV{\mathbf{V}}
\def\X{\mathcal{X}}

\def\Y{\mathcal{Y}}
\def\Z{\mathcal{Z}}
\def\ZZ{\mathbb{Z}}

\begin{document}
\allowdisplaybreaks
\title{On strongly G-regular rings}
\author{Kaito Kimura}
\address{Department of Mathematics, Purdue University, 150 N. University Street, West Lafayette, IN 47907, USA}
\email{m21018b@gmail.com}
\author{Yuki Mifune}
\address{Graduate School of Mathematics, Nagoya University, Furocho, Chikusaku, Nagoya 464-8602, Japan}
\email{yuki.mifune.c9@math.nagoya-u.ac.jp}
\author{Yuya Otake}
\address{Graduate School of Mathematical Sciences, The University of Tokyo, 3-8-1 Komaba, Meguro-ku, Tokyo 153-8914, Japan}
\email{yuyaotake31@g.ecc.u-tokyo.ac.jp}
\author{Ryo Takahashi}
\address{Graduate School of Mathematics, Nagoya University, Furocho, Chikusaku, Nagoya 464-8602, Japan}
\email{takahashi@math.nagoya-u.ac.jp}
\urladdr{https://www.math.nagoya-u.ac.jp/~takahashi/}
\thanks{2020 {\em Mathematics Subject Classification.} 13C60, 13D09, 13H10, 16E65}
\thanks{{\em Key words and phrases.} Gorenstein projective module, G-regular ring, strongly G-regular ring, quasi-dominant ring, CM-finite algebra, CM-free algebra, weakly Gorenstein algebra, virtually Gorenstein algebra, derived category}
\begin{abstract}
A noetherian ring is called G-regular when all finitely generated Gorenstein projective modules are projective. 
In this paper, we study rings satisfying the stronger condition that all Gorenstein projective modules are projective, which we call strongly G-regular. 
We show that the notion of strongly G-regular rings is closely related to that of quasi-dominant rings introduced by Takahashi and to the covariant/contravariant finiteness of a certain thick subcategory. 
We also answer a series of questions due to Chen in the negative, showing that the Gorenstein projective analogue of the Auslander--Ringel--Tachikawa theorem fails even for commutative local artin algebras which are weakly Gorenstein in the sense of Ringel and Zhang.
\end{abstract}
\maketitle
%%%%%%%%%%%%%%%%%%%%%%%%%%%%%%%%%%%%%%%%%%%%%%%%%%%%%%%%%%%%
\section{Introduction}

Let $R$ be a commutative noetherian local ring with residue field $k$ admitting a dualizing complex $D$.
Denote by $\Db(R)$ the bounded derived category of finitely generated $R$-modules.
We say that $R$ is {\em quasi-dominant} if $k$ belongs to the smallest thick subcategory of $\Db(R)$ containing $R,D$ and any non-perfect $R$-complex.
Takahashi \cite{T23} obtains a classification theorem of thick subcategories over a Cohen--Macaulay ring which is locally quasi-dominant.
We extend this theorem to the case where the ring is not Cohen--Macaulay:

\begin{thm}[Corollary \ref{lqdom}]\label{main2}
Let $R$ be a commutative noetherian ring with a dualizing complex $D$.
Suppose that $R$ is locally quasi-dominant.
Then the assignment of each subcategory $\X$ of $\Db(R)$ to the set of prime ideals $\p$ such that $X_\p$ has finite projective dimension for all $X\in\X$ induces a one-to-one correspondence
$$
{\left\{
\begin{matrix}
\text{Thick subcategories of $\Db(R)$}\\
\text{containing $R$ and $D$}
\end{matrix}
\right\}}
\cong
{\left\{
\begin{matrix}
\text{Generalization-closed subsets of $\spec R$}\\
\text{between $\reg R$ and $\gor R$}
\end{matrix}
\right\}.}
$$
\end{thm}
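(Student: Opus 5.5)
Write $\ipd(\X)$ for the set of primes $\p$ with $\pd_{R_\p}X_\p<\infty$ for all $X\in\X$. I would prove the correspondence by the usual two maps. The forward map is $\X\mapsto\ipd(\X)$. The inverse sends a generalization-closed $W$ with $\reg R\subseteq W\subseteq\gor R$ to
$$\X_W=\{X\in\Db(R)\mid \pd_{R_\p}X_\p<\infty \text{ for all } \p\in W\}.$$

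\textbf{Well-definedness.} For any $\X$, the set $\ipd(\X)$ is generalization-closed, because finite projective dimension localizes. It contains $\reg R$ trivially. It is contained in $\gor R$ once $D\in\X$: if $D_\p$ has finite projective dimension, then $R_\p$ is Gorenstein. Conversely, $\X_W$ is thick, and it contains $R$, and it contains $D$ because $W\subseteq\gor R$. By construction $\ipd(\X_W)\supseteq W$. For the reverse inclusion, take $\p\notin W$; we need some $X\in\X_W$ with $X_\p$ of infinite projective dimension. Since $W$ is generalization-closed, $V(\p)\cap W=\emptyset$. Choose a high syzygy $M$ of $R/\p$. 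Then $M$ and $R/\p$ are both supported on $V(\p)$ away from $W$, so $M_\q$ is free for all $\q\in W$. Meanwhile $M_\p$ has infinite projective dimension because $R_\p$ is not regular. So $\ipd(\X_W)=W$.

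\textbf{Injectivity.} The substantive part is showing $\X=\X_{\ipd(\X)}$ for every thick $\X$ containing $R$ and $D$. The inclusion $\subseteq$ is clear. For $\supseteq$, take $Y$ with $Y_\p$ perfect for all $\p\in\ipd(\X)$. I would run a local-to-global argument in the style of \cite{T23}: reduce, via the localization/"level" arguments developed earlier in the paper (e.g.\ a result that membership in a thick subcategory containing $R$ can be checked locally, or via $\thick$ closure of Koszul objects), to showing $Y_\p\in\thick_{\Db(R_\p)}(\X_\p)$ for every prime $\p$.

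Locally there are two cases. If $\p\in\ipd(\X)$, then $Y_\p$ is perfect, hence lies in $\thick(R_\p)$. If $\p\notin\ipd(\X)$, some $X\in\X$ has $X_\p$ non-perfect. Quasi-dominance of $R_\p$ then puts the residue field $\kappa(\p)$ in $\thick(R_\p,D_\p,X_\p)$, which is contained in $\thick(\X_\p)$. From there I would use the same argument inductively on the primes contained in $\p$ (noetherian induction on $\dim R_\p$): the thick subcategory $\thick(\X_\p)$ contains $R_\p$, $D_\p$ and $\kappa(\p)$, and by induction its localizations at smaller primes contain the appropriate objects. The aim is to conclude that it contains every complex that is locally perfect on $\ipd(\X)\cap\spec R_\p$, via a Hopkins–Neeman-type argument using Koszul complexes on the punctured spectrum.

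\textbf{Main obstacle.} The main obstacle is this gluing step in the non-Cohen–Macaulay setting: passing from "contains $\kappa(\q)$ at all bad $\q$" to "contains $Y$". I would first try to invoke the (presumably earlier-established) non-CM extension of Takahashi's theorem for a single local quasi-dominant ring, which identifies thick subcategories containing $R,D$ and $k$ with those determined on the punctured spectrum. Then I would combine it with the localization principle to conclude.
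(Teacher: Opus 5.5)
The well-definedness and surjectivity steps are fine. One small slip: a syzygy of $R/\p$ is not supported in $\V(\p)$. What you actually use is $(R/\p)_\q=0$ for $\q\in W$, and $R/\p$ itself already does the job.

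The genuine gap is injectivity, i.e.\ $\X=\X_W$ where $W$ is the finite-projective-dimension locus of $\X$. This is the entire content of the theorem, and your proposal never proves it.
\begin{enumerate}[(a)]
\item The reduction ``membership in a thick subcategory containing $R$ can be checked locally'' is not available. The paper establishes no such principle. Its only local--global statement for thick subcategories, the equality \eqref{6} in Theorem \ref{str G-req quasi-dom}, needs the extra hypothesis $(\#)$ and only decides whether $\thick\{R,D\}$ is all of $\Db(R)$. The present statement assumes nothing like $(\#)$.
\item Even granting that reduction, the local step does not close. Knowing $R_\p,D_\p,\kappa(\p)\in\thick(\X_\p)$ does not put $Y_\p$ in $\thick(\X_\p)$; you need the same gluing over $\spec R_\p$. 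So the ``induction on $\dim R_\p$'' restates the original problem. The Hopkins--Neeman/Koszul argument meant to do the gluing is never carried out, and it would also have to handle the fact that objects of $\thick(\X_\p)$ need not be localizations of objects of $\X$.
\item Your fallback, invoking a non-Cohen--Macaulay extension of Takahashi's classification for a single local quasi-dominant ring, is circular: that extension is exactly what is being proved.
\end{enumerate}

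The paper avoids the gluing problem by leaving the derived category. By Proposition \ref{2} (Krause--Stevenson plus syzygies), thick subcategories of $\Db(R)$ containing $R$ correspond to thick subcategories of $\c(R)$ containing $R$. These are resolving subcategories of $\mod R$ contained in $\c(R)$. Theorem \ref{3} with $\E=\{R,D\}$ then reduces everything to showing $\X=\nf^{-1}(\nf(\X))$ for such $\X$ with $D\in\thick_{\Db(R)}\X$. The argument then runs as follows:
\begin{enumerate}[(1)]
\item For $\p\in\nf(\X)$, quasi-dominance gives $\kappa(\p)\in\thick(\X_\p)$.
\item \cite[Lemma 10.6]{T23} together with \cite[Lemma 3.2(1)]{crspd} upgrades this to $\syz^{\depth R_\p}\kappa(\p)\in\add(\X_\p)$, an additive rather than thick closure. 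Hence $\c_0(R_\p)\subseteq\add(\X_\p)$.
\item After reducing to the local case, \cite[Theorem 3.8]{T23} does the gluing. It produces an exact sequence $0\to C\to M\oplus N\to X\to 0$ with $C\in\c_0(R)\subseteq\X$ and $X\in\X$, so $M\in\X$.
\end{enumerate}
This module-theoretic mechanism is the ingredient missing from your plan. You would need either to adopt it or to supply a fully worked-out derived substitute.
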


\noindent
Here, $\reg R$ and $\gor R$ stand for the regular and Gorenstein loci of $R$, respectively.

Takahashi \cite{T08} calls a commutative noetherian ring $R$ {\em G-regular} if any finitely generated Gorenstein projective $R$-module is projective.
Following this, we call $R$ {\em strongly G-regular} if any (possibly infinitely generated) Gorenstein projective $R$-module is projective.
Evidently, a strongly G-regular ring is G-regular.
A theorem of Iyengar and Krause \cite{IK06} implies that $R$ is strongly G-regular if and only if the smallest thick subcategory of $\Db(R)$ that contains $R,D$ coincides with $\Db(R)$.
%This enables us to reduce each problem on arbitrary Gorenstein projective modules to a problem in $\Db(R)$, where one works in a finitely generated setting.
%Another advantage is that thick subcategories and Verdier quotients behave well under localization at a prime ideal.
Using this, we can show that, under mild assumptions, strong G-regularity is a local property. 
Since G-regularity need not be preserved under localization, these two notions clearly differ in positive Krull dimensions. 
Strong G-regularity is closely related to quasi-dominance:

\begin{thm}[Corollary \ref{sgrprprng}]\label{main3}
Let $R$ be an excellent ring with a dualizing complex.
The following hold.
\begin{enumerate}[\rm(1)]
\item
The ring $R$ is strongly G-regular if and only if $R$ is locally quasi-dominant with $\reg R=\gor R$.
\item
If $R$ is a non-Gorenstein quasi-dominant local ring with an isolated singularity, $R$ is strongly G-regular.
\end{enumerate}
\end{thm}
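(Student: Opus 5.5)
The plan is to work entirely inside $\Db(R)$, using the reformulation stated just above: by the Iyengar--Krause theorem, $R$ is strongly G-regular if and only if $\thick\{R,D\}=\Db(R)$. I will also use that strong G-regularity is local, in the sense that $\thick\{R,D\}=\Db(R)$ holds if and only if $\thick\{R_\p,D_\p\}=\Db(R_\p)$ for every prime $\p$. Here I take for granted that this localization principle holds under the excellence and dualizing-complex hypotheses, as claimed in the paragraph before the statement. Both parts of the statement then reduce to questions about which thick subcategories contain the residue fields.

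For (1), first suppose $R$ is strongly G-regular. Fix a prime $\p$. Then $\Db(R_\p)=\thick\{R_\p,D_\p\}$, so $\kappa(\p)$ lies in the thick closure of $R_\p$, $D_\p$ and any complex. Hence $R_\p$ is quasi-dominant. If $\p\in\gor R$, then $D_\p$ is a shift of $R_\p$ up to perfect complexes, so $\Db(R_\p)=\thick R_\p$ consists of perfect complexes. Thus $\kappa(\p)$ has finite projective dimension and $\p\in\reg R$. This gives $\reg R=\gor R$.

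Conversely, assume $R$ is locally quasi-dominant and $\reg R=\gor R$. Apply Theorem \ref{main2} to the thick subcategory $\thick\{R,D\}$. Its associated set is the set of primes $\p$ with $D_\p$ perfect, which is exactly $\gor R$. The whole category $\Db(R)$ corresponds to $\reg R$. Since these two sets are equal and the correspondence is bijective, $\thick\{R,D\}=\Db(R)$, so $R$ is strongly G-regular. For this step I need $\reg R$ and $\gor R$ to be generalization-closed, and I need Theorem \ref{main2} to be available; excellence guarantees that these loci are open.

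For (2), let $R$ be local, non-Gorenstein and quasi-dominant, with an isolated singularity. For every non-maximal prime $\p$, the ring $R_\p$ is regular. So $\reg R=\gor R=\spec R\setminus\{\m\}$, where $\m\notin\gor R$ because $R$ is non-Gorenstein. Every local ring $R_\p$ with $\p\ne\m$ is regular and hence trivially quasi-dominant, since $\kappa(\p)$ is perfect. Together with the hypothesis at $\m$, this shows $R$ is locally quasi-dominant, and part (1) applies.

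The main obstacle is the converse direction of (1). It relies on the precise form of the invariant in Theorem \ref{main2} and on checking that $\thick\{R,D\}$ maps to $\gor R$: concretely, that $D_\p$ is perfect if and only if $R_\p$ is Gorenstein, for the normalized dualizing complex. If one prefers to avoid Theorem \ref{main2}, the alternative I would try first is induction on $\dim R/\p$. Assuming $\kappa(\q)\in\thick\{R,D\}$ localized for all $\q\supsetneq\p$, I would use quasi-dominance at $\p$ together with a non-perfect complex. Such a complex exists precisely because $\p\notin\gor R=\reg R$, which is the point where the hypothesis $\reg R=\gor R$ is consumed.
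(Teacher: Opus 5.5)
Your proposal is correct and follows essentially the paper's route: the paper deduces this statement from the equivalence (1)$\Leftrightarrow$(7) of Theorem~\ref{str G-req quasi-dom}, whose proof likewise combines Iyengar--Krause, localization together with the observation that a Gorenstein $R_\p$ with $\kappa(\p)\in\thick\{R_\p,D_\p\}$ must be regular, and Corollary~\ref{lqdom} (your use of Theorem~\ref{main2}, forcing $\thick\{R,D\}=\Db(R)$ when $\reg R=\gor R$) for the converse, with (2) an immediate consequence of (1). Note that you only use the easy direction of the localization principle, so the excellence hypothesis (condition $(\#)$, needed in the paper only for the local-to-global equality \eqref{6}) is never actually consumed in your argument, and your remark that openness of the loci is needed is unnecessary since $\reg R$ and $\gor R$ are always generalization-closed.
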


G-regularity connects to approximation theory for finitely generated Gorenstein projective modules and to the homological finiteness of the (sub)category they form, namely, its {\em covariant/contravariant finiteness}. 
By the maximal Cohen--Macaulay approximation theorem due to Auslander and Buchweitz \cite{AB}, the category of finitely generated Gorenstein projective $R$-modules is contravariantly finite if $R$ is Gorenstein or G-regular. 
Conversely, Christensen, Piepmeyer, Striuli, and Takahashi \cite{CPST08} prove that if $R$ is a complete local ring and this category is contravariantly finite, then $R$ is either G-regular or Gorenstein.
For artin algebras, combining results of Beligiannis \cite{B98} and Beligiannis and Krause \cite{BK} yields a characterization of strong G-regularity in terms of the homological finiteness of a certain thick subcategory of the category of finitely generated modules. 
We can establish a higher-dimensional analogue of the above characterization for artin algebras:
%More precisely, the strong G-regularity of $R$ is characterized in terms of the homological finiteness of a certain thick subcategory of the category $\cm(R)$ of maximal Cohen--Macaulay $R$-modules. 

\begin{thm}[Theorem \ref{contfin}]\label{main4}
Let $R$ be a Cohen--Macaulay non-Gorenstein complete local ring.
Then $R$ is a strongly G-regular ring if and only if the smallest thick subcategory of $\cm(R)$ containing $R$ and the canonical module $\omega_R$ is covariantly finite, if and only if it is contravariantly finite.
\end{thm}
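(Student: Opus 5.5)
Write $\T$ for the smallest thick subcategory of $\cm(R)$ containing $R$ and $\omega_R$. The plan is to relate $\T$ to the thick subcategory $\thick_{\Db(R)}\{R,\omega_R\}$ of $\Db(R)$ and then run a maximal Cohen--Macaulay approximation argument in the style of Auslander--Buchweitz and of \cite{CPST08}. Since $R$ is Cohen--Macaulay with canonical module $\omega_R$, we may take $D=\omega_R$ as dualizing complex. By the Iyengar--Krause characterization recalled in the introduction, $R$ is strongly G-regular if and only if $\thick_{\Db(R)}\{R,\omega_R\}=\Db(R)$. Because every object of $\Db(R)$ is built from maximal Cohen--Macaulay modules, this should be equivalent to $\T=\cm(R)$. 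One inclusion uses that $\cm(R)\cap\thick_{\Db(R)}\{R,\omega_R\}$ is thick in $\cm(R)$ and contains $R,\omega_R$. For the other, an MCM module lying in the derived thick closure lies in $\T$; I would check this via the Auslander--Buchweitz description of $\Db(R)$ modulo perfect complexes and the corresponding statement for $\omega_R$. I will use this translation freely.

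\textbf{($\Rightarrow$).} If $R$ is strongly G-regular, then $\T=\cm(R)$, and $\cm(R)$ is functorially finite in $\mod R$ over a complete (hence excellent, with canonical module) CM local ring. Contravariant finiteness is the MCM approximation theorem of Auslander--Buchweitz. Covariant finiteness follows from MCM hulls, or from duality with $\omega_R$, since $\Hom(-,\omega_R)$ is an exact duality on $\cm(R)$ and carries approximations to approximations. So both finiteness conditions hold.

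\textbf{($\Leftarrow$).} Assume $\T$ is contravariantly finite; the covariant case should follow by applying $(-)^\dagger=\Hom(-,\omega_R)$, which preserves $\T$ because it swaps $R$ and $\omega_R$. Let $M\in\cm(R)$ be arbitrary. Take a right $\T$-approximation $f\colon X\to M$ and enlarge $X$ by a free cover so that $f$ is surjective. Wakamatsu's lemma gives $\Ext^1(\T,K)=0$ for $K=\ker f$, where $K$ is MCM. Since $\T$ is thick, hence closed under syzygies and cosyzygies inside $\cm(R)$, this upgrades to $\Ext^{i}(T,K)=0$ for all $i>0$ and $T\in\T$; in particular $\Ext^{>0}(\omega_R,K)=0$. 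For an MCM module $K$ over a CM local ring this forces $K$ to have finite injective dimension, so $K\in\add\omega_R\subseteq\T$ (completeness and a minimality argument as in \cite{CPST08} pin this down). Then $M$ sits in an exact sequence $0\to K\to X\to M\to0$ with $K,X\in\T$, so $M\in\T$ by thickness. Hence $\T=\cm(R)$ and $R$ is strongly G-regular.

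\textbf{Main obstacle.} I expect the hardest step to be the upgrade from $\Ext^{>0}(\omega_R,K)=0$ to $\id K<\infty$. The statement "MCM and $\Ext^{>0}(\omega,K)=0$ implies $K\in\add\omega$" is a known open-type question in general, so I would exploit more than $\omega_R$. Namely, $\Ext^{>0}(T,K)=0$ for all $T\in\T$, including $\Omega^{-n}$-type modules built from $\omega_R$ and $R$ in $\T$. The first thing I would try is Wakamatsu's lemma with a minimal approximation over the complete ring, giving a genuine cotorsion pair $(\T,\T^\perp\cap\cm R)$. Taking $M$ to be a high syzygy of $k$ should then show that $\T^\perp\cap\cm R$ consists of modules of finite injective dimension. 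The non-Gorenstein hypothesis is used to exclude the degenerate case $\T=\cm(R)$ automatically with $R$ Gorenstein, where $\omega_R\cong R$ and the equivalence fails.
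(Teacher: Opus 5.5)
Your translation of strong G-regularity into $\T=\cm(R)$ (this is Proposition~\ref{2} together with Theorem~\ref{str G-req quasi-dom}; see Remark~\ref{12}) agrees with the paper. So do the direction ($\Rightarrow$), the equivalence of the two finiteness conditions via $(-)^\dagger$, and the first half of ($\Leftarrow$): an exact sequence $0\to K\to X\to M\to 0$ with $X\in\T$ and $K\in\T^\perp\cap\cm(R)$, which the paper obtains from Lemma~\ref{wak}(2). Two small corrections. Do not enlarge $X$ by a free module: Wakamatsu's lemma needs a minimal approximation, and since $R\in\T$ every right $\T$-approximation is already surjective. In ($\Rightarrow$) only finiteness inside $\cm(R)$ is needed, which is trivial; MCM hulls approximate by modules of finite injective dimension, not by $\cm(R)$.

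The genuine gap is the step from $K\in\T^\perp\cap\cm(R)$ to $K\in\T$. It carries the whole theorem, and you leave it open. The implication you first state is not an open question; it is false in general. Since $\Ext^i(\omega,K)\cong\Ext^i(K^\dagger,R)$, it says every MCM module $N$ with $\Ext^{>0}(N,R)=0$ is free. That fails for every ring that is not G-regular, and over a singular Gorenstein ring every MCM module satisfies its hypothesis. The full condition $K\in\T^\perp$ does not help formally either: if $G$ is a non-free totally reflexive module, then $G^\dagger\in\T^\perp\cap\cm(R)$ has infinite injective dimension. The implication does hold a posteriori for strongly G-regular rings, by Corollary~\ref{stgregtr} and Remark~\ref{5}(1), but that is the conclusion you are after. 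So contravariant finiteness must be used a second time, globally, and the non-Gorenstein hypothesis must enter exactly here. Your sketch (a cotorsion pair, high syzygies of $k$) does neither. Your account of that hypothesis is also off: for Gorenstein $R$ one has $\T=\add R$, which is always functorially finite but differs from $\cm(R)$ unless $R$ is regular.

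The paper closes the gap with two ideas.

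\emph{First idea.} It shows that $\C:={}^\perp\T\cap\cm(R)$ is contravariantly finite; note $K^\dagger\in\C$ because $\T^\dagger=\T$. Approximate $(\syz M)^\dagger$ rather than $M$, getting $0\to Y\to X\to(\syz M)^\dagger\to 0$ with $Y\in\T^\perp$ and $X\in\T$. Pull back along $0\to M^\dagger\to\omega^{\oplus n}\to(\syz M)^\dagger\to 0$. The resulting extension of $\omega^{\oplus n}$ by $Y$ splits because $Y\in\T^\perp$. Dualizing yields $0\to X^\dagger\to Y^\dagger\oplus R^{\oplus n}\to M\to 0$, which is a right $\C$-approximation by Lemma~\ref{wak}(1), since $X^\dagger\in\T\subseteq\C^\perp$.

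\emph{Second idea.} It applies \cite[Theorem 1.4]{T11} to the contravariantly finite resolving subcategory $\C$. Since $Y^\dagger\in\C\subseteq{}^\perp R$, if $\pd Y^\dagger=\infty$ then $\C$ is $\cm(R)$ or $\mod R$. Then $\syz^d k\in{}^\perp R$ with $d=\dim R$, i.e.\ $\Ext^{>d}(k,R)=0$, so $R$ is Gorenstein. Hence $Y^\dagger$ is free, so $Y\in\add\omega\subseteq\T$ and $M\in\T$.

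In your notation, what you need is precisely that $K^\dagger$ is free. Without this rigidity theorem for contravariantly finite resolving subcategories, or a substitute, your argument does not close.
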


\noindent
Here, $\cm(R)$ stands for the category of maximal Cohen--Macaulay $R$-modules.
Applying this theorem, under an additional assumption, we recover a theorem of Dey, Kimura, Liu, and Otake \cite{DKLO25}, which states that a non-Gorenstein Cohen--Macaulay local ring of finite CM-representation type is strongly G-regular.

%The theory of algebras of finite representation type has been playing a fundamental role in representation theory since Gabriel \cite{G} established a celebrated classification theorem. 
%A classical result in this theory is the Auslander--Ringel--Tachikawa theorem \cite{A76,RT}, which asserts that an artin algebra $\Lambda$ is of finite representation type if and only if every $\Lambda$-module is a direct sum of finitely generated ones.
%This theorem shows that finite representation type, although formulated in terms of finitely generated modules, determines the structure of the entire module category.

Let $\Lambda$ be an artin algebra.
Beligiannis \cite{B98} calls $\Lambda$ {\em CM-finite} if there exist only finitely many isomorphism classes of indecomposable finitely generated Gorenstein projective $\Lambda$-modules.
Also, Chen \cite{Che12} calls $\Lambda$ {\em CM-free} if every finitely generated Gorenstein projective $\Lambda$-module is projective.
Note that CM-freeness is the same as G-regularity.
Ringel and Zhang \cite{RZ} call $\Lambda$ {\em weakly Gorenstein} if every finitely generated $\Lambda$-module $M$ with $\Ext_\Lambda^{>0}(M,\Lambda)=0$ is Gorenstein projective.

As a Gorenstein projective analogue of the classical Auslander--Ringel--Tachikawa theorem \cite{A76,RT}, Chen \cite{Che08} proves that when $\Lambda$ is Iwanaga--Gorenstein, $\Lambda$ is CM-finite if and only if every Gorenstein projective $\Lambda$-module is a direct sum of finitely generated Gorenstein projective $\Lambda$-modules.
%Thus, it is natural to ask whether an Auslander--Ringel--Tachikawa-type theorem holds for Gorenstein projective modules, that is, whether finitely generated Gorenstein projective modules determine the structure of the category of all Gorenstein projective modules.
Later, Beligiannis \cite{B98} shows that every Gorenstein projective $\Lambda$-module is a direct sum of finitely generated Gorenstein projective $\Lambda$-modules if and only if $\Lambda$ is both CM-finite and {\em virtually Gorenstein}.
Virtual Gorensteinness is a basic assumption in the study of Gorenstein projective modules via cotorsion pairs and approximation theory; see \cite{B05,B98,BR,BK,Che17} for instance.
As every Iwanaga--Gorenstein artin algebra is virtually Gorenstein, Beligiannis's theorem extends Chen's.

Chen \cite[Problems A,B,C]{Che17} formulated the following three problems for an artin algebra $\Lambda$.

\begin{enumerate}[\rm(A)]
\item Is $\Lambda$ CM-finite if and only if every Gorenstein projective $\Lambda$-module is a direct sum of finitely generated Gorenstein projective $\Lambda$-modules?
\item Is $\Lambda$ virtually Gorenstein if it is CM-finite?
\item Is every Gorenstein projective $\Lambda$-module projective if $\Lambda$ is CM-free?
\end{enumerate}

\noindent Beligiannis's theorem \cite{B98} shows that the ``if'' part of (A) holds, while the ``only if'' part is equivalent to (B). 
Thus, (A) and (B) are equivalent, and the affirmativity of either of them implies the affirmativity of (C).
%The aim of this paper is to study these problems using methods in commutative algebra.
We give a negative answer to (C), which hence gives a negative answer to all the three problems:

\begin{thm}[Theorem \ref{hanrei}]\label{main1}
Let $k$ be any field.
Then there exists a commutative local finite-dimensional $k$-algebra $R$ which is weakly Gorenstein and G-regular (i.e., CM-free) but not strongly G-regular, that is to say, the ring $R$ possesses an infinitely generated Gorenstein projective module which is not projective.
\end{thm}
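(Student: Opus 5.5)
Over an artin local ring $R$ the dualizing complex is $D=E=\EE_R(k)$, the injective hull of the residue field. By the Iyengar--Krause criterion quoted in the introduction, $R$ fails to be strongly G-regular exactly when $\thick_{\Db(R)}(R,E)\ne\Db(R)$, which here amounts to $k\notin\thick(R,E)$. So the plan is to choose a concrete finite-dimensional local algebra and verify three properties of it: (i) G-regularity, (ii) weak Gorensteinness, and (iii) $k\notin\thick(R,E)$.

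I would look for $R$ with $\m^3=0$, a short Loewy length, and an explicitly computable $E$. A natural first try is a ring of the form $R=k[x,y,z]/I$ with $\m^3=0$, or a fiber product / trivial extension $R=A\ltimes M$, chosen so that $E$ has a very rigid syzygy structure. The key idea for (iii) is to find a property $P$ of complexes that is closed under shifts, cones and summands, and that holds for $R$ and $E$ but fails for $k$. Possible choices for $P$ include a constraint on Betti numbers or Bass numbers (for instance, the Poincaré series being a rational function with a prescribed denominator), a linear-algebra constraint on minimal complexes, or the vanishing of some natural transformation. The cleanest candidate is a "support variety"-type invariant. Equivalently, one can exhibit a nonzero cohomological functor, or a Verdier-quotient obstruction, that kills $R$ and $E$ but not $k$. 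Concretely, I would try to show that every minimal complex in $\thick(R,E)$ has Betti numbers that eventually grow at a rate strictly different from that of $k$. Alternatively, and more robustly, I would produce a nonprojective infinitely generated Gorenstein projective module directly: a totally acyclic complex of free modules of infinite rank whose syzygies are not projective. This could be built as a direct limit or product of finitely generated pieces that are not themselves totally acyclic.

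For (i) and (ii), I would use the standard criteria for rings with $\m^3=0$. Results in the style of Yoshino and Christensen--Veliche say that a non-Gorenstein ring with $\m^3=0$ admitting a nonfree totally reflexive module must satisfy numerical conditions, such as $\ell(\m^2)=\edim R-1$ and the socle being $\m^2$. Choosing $R$ to violate these conditions gives G-regularity. Weak Gorensteinness then requires showing that every finitely generated $M$ with $\Ext^{>0}(M,R)=0$ is free. For this I would use the fact that over rings with $\m^3=0$, the vanishing of $\Ext^{\gg0}(M,R)$ forces $M$ to be free unless the same numerical conditions hold, via Hilbert series arguments on minimal resolutions.

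The main obstacle is (iii): certifying that $k\notin\thick(R,E)$ while no finitely generated obstruction exists. My first attempt would be to write an explicit $\ZZ$-graded $R$ and construct a graded totally acyclic complex of free modules of infinite rank. For example, I would take the compatible system $\Hom$-dual to injective resolutions and splice a resolution of $E$ (whose syzygies stay in $\thick(R,E)$) with a suitable coresolution, then check that its syzygies are not projective. Failing that, I would look for a triangulated invariant separating $k$ from $\thick(R,E)$, for instance the compact objects of the stable category of Gorenstein projectives.
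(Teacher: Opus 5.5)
Your reduction is correct: for an artinian local ring the dualizing complex is $\EE_R(k)$, and by the Iyengar--Krause criterion (Theorem \ref{str G-req quasi-dom}, Remark \ref{12}) $R$ fails to be strongly G-regular exactly when $k\notin\thick_{\Db(R)}\{R,\EE_R(k)\}$. Beyond that, though, the proposal lists strategies rather than giving a proof. You never fix $R$, and step (iii), which is the whole content of the theorem, is left open. There are three further problems.

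Your criteria for (i) and (ii) do not help choose $R$. Strongly G-regular rings satisfy (i) and (ii) anyway (Remark \ref{5}(1), Corollary \ref{stgregtr}), and some of your own candidates are strongly G-regular. A fiber product $S\times_kT$ of artinian local rings that are not fields has decomposable maximal ideal, and $k[x_1,\dots,x_n]/(x_1,\dots,x_n)^2$ with $n\ge2$ is Burch. Both are dominant (Example \ref{1}(1)), and being artinian and non-Gorenstein they are strongly G-regular by Corollary \ref{sgrprprng}(2).

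Your separating invariants also fail on the ring that does work. Take $R=A\otimes_kB$ with $\m^2=0=\n^2$, $p=\dim_k\m\ge2$ and $q=\dim_k\n\ge2$. Minimal resolutions tensor over $k$, so $P^R_k(t)=1/((1-pt)(1-qt))$ and $P^R_{\EE_R(k)}(t)=(p-t)(q-t)/((1-pt)(1-qt))$. Since $(p-t)(q-t)$ does not vanish at $1/p$ or $1/q$, neither the denominator of the Poincar\'e series nor the Betti growth distinguishes $k$ from $\EE_R(k)$.

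The splicing idea cannot work either. Over a G-regular ring, if a totally acyclic complex has one term of finite rank, that term's image is a finitely generated Gorenstein projective module, hence projective. Then the adjacent short exact sequences split, since $\Ext^1_R(N,P)=0$ for $N$ Gorenstein projective and $P$ projective. Inductively all images are projective and the complex is contractible. In particular no free resolution of $\EE_R(k)$ can sit inside a nontrivial complete resolution.

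The missing idea is a mechanism that genuinely uses infinite rank. Over $A$ with $\m^2=0$, let $a$ be a square-zero endomorphism of a free module $G$ with $\Im a\subseteq\m G$. Then $a$ is exact only if $\ker a=\Im a=\m G$, i.e.\ $G/\m G\cong\m G$, which forces $\operatorname{rank}G=p\cdot\operatorname{rank}G$. This is impossible in finite rank when $p\ge2$, but a bijection $\lambda\colon\mathbb{N}\times\{1,\dots,p\}\to\mathbb{N}$ realizes it in countable rank.

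The paper applies this to both tensor factors. On the free $R$-module $G$ with basis $\{e_{r,s}\}_{(r,s)\in\mathbb{N}^2}$ it sets $a(e_{\lambda(r,i),s})=x_ie_{r,s}$ and $b(e_{r,s})=\sum_jy_je_{r,\mu(s,j)}$, so $a^2=0=b^2$ and $ab=ba$. A $B$-linear map $g$ (merely additive over $R$) with $ag+ga=1$ and $bg=gb$ gives exactness. An additive map $h$ on $\Hom_R(G,H)$ with $b^*h+hb^*=1$ and $a^*h=ha^*$ gives exactness after $\Hom_R(-,H)$. Proposition \ref{twosqzeromaps} combines the two: $c=\left(\begin{smallmatrix}a&b\\b&-a\end{smallmatrix}\right)$ on $G\oplus G$ gives a totally acyclic one-periodic complex. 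Its image is nonzero and annihilated by $I^2\ne0$, where $I=\m R+\n R$, so it is not projective.

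Your plan for (i) and (ii) matches the paper. For this $R$, both G-regularity and $\TR$ follow from \cite[Example 2.5(a)]{AV}, because $\dim_k\operatorname{Soc}R=pq\ne p+q-1=\edim R-1$.
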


\noindent
%For non-Gorenstein artinian local rings, quasi-dominance is equivalent to strong G-regularity, while a typical example in Theorem \ref{main1} is not dominant \cite{T23}.
%These observations leads us to the proof of Theorem \ref{main1}.
The notion of a strong G-regular ring is also introduced by Atkins and Vraciu \cite{AV}, whose definition is equivalent to being both weakly Gorenstein and G-regular.
As an application of the above theorem, we observe that strong G-regularity in the sense of Atkins and Vraciu is strictly weaker than strong G-regularity in our sense.
%One of the rings $R$ given in the above theorem has already turned out to be G-regular \cite{T23}, we prove which admits a non-projective Gorenstein projective module by constructing a nontrivial totally acyclic complex.

The organization of this paper is as follows.
In Section 2, we develop the theory of quasi-dominant rings and prove the classification of subcategories stated in Theorem \ref{main2}.
In Section 3, we characterize strong G-regularity in terms of derived categories and quasi-dominance, and prove Theorem \ref{main3}.
In Section 4, we relate strong G-regularity to the homological finiteness of thick subcategories, and obtain Theorem \ref{main4}.
In the final Section 5, we provide examples concerning strong G-regularity and prove Theorem \ref{main1}, which answers Chen's questions in the negative.

Throughout this paper, we adopt the following convention.
\begin{conv}
Let $R$ be a commutative noetherian ring. 
For a prime ideal $\p$ of $R$, denote by $\kappa(\p)$ the residue field of $R_\p$, that is, $\kappa(\p)=R_\p/\p R_\p$.
For an ideal $I$ of $R$, the set of prime ideals of $R$ containing $I$ is denoted by $\V(I)$.
All subcategories are assumed to be strictly full.
We denote by $\Mod R$ the category of $R$-modules and by $\mod R$ the subcategory consisting of finitely generated $R$-modules. 
The bounded derived category of $\mod R$ is denoted by $\Db(R)$.
We view $\mod R$ as the subcategory of $\Db(R)$.
\end{conv}

%%%%%%%%%%%%%%%%%%%%%%%%%%%%%%%%%%%%%%%%%
\section{Quasi-dominant rings}

In this section, we shall introduce the notion of quasi-dominance for arbitrary local rings by extending the definition of quasi-dominance for Cohen--Macaulay local rings \cite{T23}, and investigate its fundamental properties.
Quasi-dominance originates from dominance, which has also been introduced in the same paper \cite{T23}.

Recall that a {\em thick} subcategory of a triangulated category $\T$ is defined as a subcategory closed under shifts, cones, and direct summands.
The \textit{thick closure} of a subcategory $\X$ of $\T$ is defined to be the smallest thick subcategory containing $\X$, and denoted $\thick_\T \X$.
We now recall the definitions of dominance and local dominance.
For our purpose, we adopt the equivalent definitions; see \cite[Corollary 10.8 and Remark 10.9]{T23}.

\begin{dfn}
\begin{enumerate}[(1)]
\item
Suppose that $R$ is a local ring with residue field $k$.
We say that $R$ is \textit{dominant} provided that $k$ belongs to $\thick_{\Db(R)}\{R, X\}$ for every object $X\in\Db(R)$ with infinite projective dimension.
\item
We say that $R$ is \textit{locally dominant} if the local ring $R_\p$ is dominant for every prime ideal $\p$ of $R$.
\end{enumerate}
\end{dfn}

Recall that a \textit{dualizing complex} is by definition an object $D\in\Db(R)$ of finite injective dimension such that the homothety morphism $R\to\RHom_R(D,D)$ is an isomorphism.
We define quasi-dominance as follows, allowing in the definition of dominance the use of a dualizing complex to build the residue field.

\begin{dfn}\label{def_quasidom}
\begin{enumerate}[(1)]
\item
Let $R$ be a local ring with residue field $k$ admitting a dualizing complex $D$.
We say that $R$ is {\em quasi-dominant} if $k\in\thick_{\Db(R)}\{R,D,X\}$ for every $X\in\Db(R)$ with infinite projective dimension.
\item
We say that $R$ is {\em locally quasi-dominant} if for each $\p\in\spec R$ the local ring $R_\p$ is quasi-dominant.
\end{enumerate}
\end{dfn}

\begin{rem}\label{4}
\begin{enumerate}[(1)]
\item
When $R$ is a Cohen--Macaulay local ring, $R$ is quasi-dominant in our sense if and only if $R$ is quasi-dominant in the sense of \cite{T23}.
We refer the reader to \cite[Proposition 10.15(2)]{T23}.
\item
Let $R$ be a local ring with residue field $k$ admitting a dualizing complex $D$.
Then $R$ is quasi-dominant if and only if $R$ is either Gorenstein dominant, or non-Gorenstein with $k\in \thick_{\Db(R)}\{R,D\}$.
\end{enumerate}
\end{rem}

Recall that $R$ is said to have {\em finite CM-representation type} if it has only finitely many isomorphism classes of indecomposable maximal Cohen--Macaulay modules. 
Here are examples of quasi-dominant local rings.

\begin{eg}\label{1}
Let $R$ be a local ring with maximal ideal $\m$ and residue field $k$.
\begin{enumerate}[(1)]
\item
If $R$ is dominant, then $R$ is quasi-dominant.
This is immediate from the definitions.
Hence, $R$ is quasi-dominant if $R$ is a hypersurface, or a Cohen--Macaulay ring with minimal multiplicity and infinite residue field, or more generally, a {\em Burch} ring in the sense of \cite{burch}, or if $\m$ is {\em quasi-decomposable} in the sense of \cite{fiber}.
\item
If $R$ is an excellent Cohen--Macaulay local ring of finite CM-representation type, then it is quasi-dominant; see \cite[Proposition 10.15(4)]{T23}.
We will remove the assumption of excellence in Corollary \ref{8}.
\item
If the residue field $k$ is infinite, and $R$ is not Gorenstein but {\em almost Gorenstein} in the sense of \cite{GTT15}, then $R$ is a quasi-dominant local ring.
We refer the reader to \cite[Proposition 10.15(5)]{T23}.
\end{enumerate}
\end{eg}

In the following, we explore the quasi-dominance property of Veronese subrings.
For a group $G$ and a $G$-graded ring $A$, we denote by $\mod^GA$ the category of finitely generated $G$-graded $A$-modules.

\begin{eg}\label{vrns}
In what follows, we use the basic facts on Veronese subrings stated in \cite[Chapter 3]{GW78} and \cite[Exercise 3.6.21]{BH}.
Let $B=k[x_1,\ldots, x_d]$ be a polynomial ring over a field $k$ with $\deg(x_i)=1$ for all $1\le i\le d$.
Let $A$ be the $n$th Veronese subring of $B$.
Then $A$ is Cohen--Macaulay.
Denote by $\omega_A$ the canonical module of $A$.
The assignment $M=\bigoplus_{i\in\ZZ}M_i\mapsto M=\bigoplus_{i=0}^{n-1}M_{\overline{i}}$ where $M_{\overline{i}}=\bigoplus_{j\in\ZZ} M_{nj+i}$ gives an exact functor $\eta:\mod^\ZZ B\to\mod^{\ZZ/n\ZZ}B$; note that $B$ is $\ZZ$-graded and $\ZZ/n\ZZ$-graded.
Then $A=B_{\overline{0}}$ and $\omega_A=B_{\overline{-d}}$.
Let $R$ be the localization of $A$ at its graded maximal ideal.
We investigate if $R$ is either Gorenstein or quasi-dominant, using the characterization \cite[Example 10.8]{GTT15} of almost Gorenstein graded rings.
If $A$ is an almost Gorenstein graded ring, $R$ is an almost Gorenstein local ring, so $R$ is either Gorenstein or quasi-dominant.
\begin{enumerate}[(1)]
\item
Let $n=1$ or $d=1$.
Then $A$ is regular, and so is $R$.
Hence $R$ is both Gorenstein and quasi-dominant.
\item
Let $n=2$.
If $d$ is even, $A$ is Gorenstein and so is $R$.
If $d=3$, then $A$ is not Gorenstein but almost Gorenstein, so $R$ is quasi-dominant.
Now, suppose that $d$ is odd and at least $5$.
Then $A$ is not almost Gorenstein.
However, $R$ is a quasi-dominant local ring.
In fact, as $B$ is regular, $k$ belongs to $\thick_{\mod B}B$.
Since $B=A\oplus\omega_A$, we observe that $k\in\thick_{\Db(A)}\{A, \omega_A\}$, which implies that $k\in\thick_{\Db(R)}\{R,\omega_R\}$.
\item
Let $n=3$.
If $d=2$, then $R$ has minimal multiplicity, and is quasi-dominant if $k$ is infinite by Example \ref{1}(1).
If $d=3$, then $A$ is Gorenstein, and so is $R$.
Let $d=4$.
Then $A$ is not almost Gorenstein.
However, $R$ is quasi-dominant.
Indeed, the minimal free resolution of the graded $B$-module $M=B/(x_1,\dots,x_4)^2$ has the form
$$
0 \to B(-5)^{\oplus 4}\to B(-4)^{\oplus 15}\to B(-3)^{\oplus 20}\to B(-2)^{\oplus 10}\to B\to M\to 0.
$$
Sending this by the exact functor $\eta$ and taking the degree $\overline2$ components, we obtain an exact sequence 
$$
0 \to B_{\overline{0}}^{\oplus 4}\to B_{\overline{1}}^{\oplus 15}\to B_{\overline{2}}^{\oplus 20}\to B_{\overline{0}}^{\oplus 10}\to B_{\overline{2}}\to 0
$$
of $A$-modules.
This shows $B_{\overline{1}}$ is in $\thick_{\mod A}\{B_{\overline0}, B_{\overline{2}}\}=\thick_{\mod A}\{A, \omega_A\}$, and so is $B=B_{\overline{0}}\oplus B_{\overline{1}}\oplus B_{\overline{2}}$.
As $k$ belongs to $\thick_{\mod B}B$, we get $k\in \thick_{\Db(A)}B=\thick_{\Db(A)}\{A, \omega_A\}$.
Thus $k\in\thick_{\Db(R)}\{R,\omega_R\}$.
%\item
%If either $n$ or $d$ is increased beyond the range considered in (3), then, whenever the existing criteria fail to confirm that the ring is dominant or almost Gorenstein, we currently do not know how to determine whether it is quasi-dominant.
\end{enumerate}
\end{eg}

\begin{ques}
Can one determine whether $R$ is either Gorenstein or quasi-dominant for other $n$ and $d$\,?
\end{ques}

Next, we investigate how the quasi-dominance property is transferred by fundamental algebraic operations.
The proposition below shows the stability of quasi-dominance under modding out by a regular element.
Note that the same statement holds for dominance; see \cite[Theorem 5.6]{T23}.

\begin{prop}\label{qdomdeform}
Let $(R,\m, k)$ be a local ring with a dualizing complex $D$.
Let $x\in \m$ be $R$-regular.
Then:\\
{\rm (1)} If $R/xR$ is quasi-dominant, then so is $R$.\quad
{\rm (2)} If $x\notin \m^2$ and $R$ is quasi-dominant, then so is $R/xR$.
\end{prop}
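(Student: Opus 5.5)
Set $\overline R=R/xR$ and $\overline D=D\otimes^{\mathbf L}_R\overline R$. Since $x$ is $R$-regular, $\overline D$ is a dualizing complex for $\overline R$. Both rings have residue field $k$. By Remark \ref{4}(2), quasi-dominance splits into the Gorenstein case, where it is dominance, and the non-Gorenstein case, where it means $k\in\thick\{R,D\}$. A regular element does not change Gorensteinness: $R$ is Gorenstein if and only if $\overline R$ is. In the Gorenstein case we may take $D=R$ and $\overline D=\overline R$, so quasi-dominance is dominance and both statements follow from \cite[Theorem 5.6]{T23}. It therefore suffices to treat the non-Gorenstein case and prove
$$k\in\thick_{\Db(\overline R)}\{\overline R,\overline D\}\iff k\in\thick_{\Db(R)}\{R,D\},$$
with the direction $\Leftarrow$ only under $x\notin\m^2$.

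\textbf{Part (1).} Restriction of scalars $\Db(\overline R)\to\Db(R)$ is exact, so it carries $\thick\{\overline R,\overline D\}$ into $\thick_{\Db(R)}\{\overline R,\overline D\}$. We have $\overline R=\mathrm{cone}(R\xrightarrow{x}R)$ and $\overline D\cong\mathrm{cone}(D\xrightarrow{x}D)$, so both lie in $\thick\{R,D\}$. Hence $k\in\thick\{\overline R,\overline D\}$ implies $k\in\thick_{\Db(R)}\{R,D\}$.

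\textbf{Part (2).} Apply the exact functor $-\otimes^{\mathbf L}_R\overline R$. It sends $R\mapsto\overline R$ and $D\mapsto\overline D$, and it sends $k$ to $k\otimes^{\mathbf L}_R\overline R$. Since $x$ annihilates $k$, the latter has homology $k$ in degrees $0$ and $1$. So $k\in\thick\{R,D\}$ gives $k\otimes^{\mathbf L}_R\overline R\in\thick_{\Db(\overline R)}\{\overline R,\overline D\}$.

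The remaining step is to extract $k$ from this complex, and this is where $x\notin\m^2$ enters. By the standard result (Nagata), when $x\in\m\setminus\m^2$ is regular, $k\otimes^{\mathbf L}_R\overline R\cong k\oplus k[1]$ in $\Db(\overline R)$. One way to see this is that the $\overline R$-syzygies of $k$ split off a copy of $k$; equivalently, the class in $\Ext^2_{\overline R}(k,k)$ defining the triangle $k[1]\to k\otimes^{\mathbf L}_R\overline R\to k\to k[2]$ vanishes. That class is, up to sign, the image of $x$ under the natural map $\m/\m^2\to$ (Koszul/Eisenbud operator) $\Ext^2_{\overline R}(k,k)$, and it is zero exactly because $x\notin\m^2$. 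Granting the splitting, $k$ is a direct summand of $k\otimes^{\mathbf L}_R\overline R$, hence $k\in\thick\{\overline R,\overline D\}$ by closure under summands, and $\overline R$ is quasi-dominant.

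I expect the splitting to be the main obstacle. I would either cite Nagata's theorem ($\syz^2_{\overline R}k$ has $k$ as a summand of the first syzygy) or argue directly: choose a minimal free resolution of $k$ over $R$ with $x$ as part of a minimal generating set of $\m$, reduce it modulo $x$, and exhibit the splitting at the level of complexes.
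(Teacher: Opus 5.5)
Your argument follows the paper's proof step for step. The Gorenstein case goes through Remark~\ref{4}(2) and \cite[Theorem 5.6]{T23}. Part (1) uses the triangles for multiplication by $x$ on $R$ and on $D$, together with restriction of scalars. Part (2) uses the exact functor $-\otimes^{\mathbf L}_R\overline R$. You assert without proof that $D\otimes^{\mathbf L}_R\overline R$ is dualizing for $\overline R$; this is exactly the computation $D\otimes^{\mathbf L}_R\overline R\cong\RHom_R(\overline R,D)[1]$ that the paper carries out, and it is standard. The one real deviation is that you tensor $k$ rather than $\m$. There your justification of the key splitting is wrong as written, although the splitting $k\otimes^{\mathbf L}_R\overline R\cong k\oplus k[1]$ itself is true.

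\textbf{The justification.} Suppose the obstruction class were the image of $\bar x$ under a linear map $\m/\m^2\to\Ext^2_{\overline R}(k,k)$. Then it would vanish when $x\in\m^2$, which is backwards. For $R=k\llbracket s\rrbracket$ and $x=s^2$, one gets $k\otimes^{\mathbf L}_R\overline R\cong(\overline R\xrightarrow{s}\overline R)$, a perfect complex, so $k$ is not a summand. Nagata's lemma is also misquoted: it says $\m/x\m\cong k\oplus\m/xR$. It says nothing about $\overline R$-syzygies of $k$ splitting off $k$.

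\textbf{How the paper avoids this.} It applies the functor to $\m$, which lies in $\thick\{R,D\}$ along with $k$. Since $x$ is regular on $\m$, $\m\otimes^{\mathbf L}_R\overline R\cong\m/x\m\cong k\oplus\m/xR$, a module with $k$ as a direct summand. No derived splitting is needed.

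\textbf{Two ways to keep $k$.}
First, derive your splitting from the same lemma. Tensoring $\m\to R\to k\rightsquigarrow$ exhibits $k\otimes^{\mathbf L}_R\overline R$ as the cone of $\m/x\m\to\overline R$. Under Nagata's decomposition this map is $(0,\mathrm{incl})\colon k\oplus\m/xR\to\overline R$, whose cone is $k[1]\oplus k$.

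Second, argue with the class $\delta$ in the triangle $k[1]\to k\otimes^{\mathbf L}_R\overline R\to k\xrightarrow{\delta}k[2]$. Apply $-\otimes^{\mathbf L}_{\overline R}k$ and use $(k\otimes^{\mathbf L}_R\overline R)\otimes^{\mathbf L}_{\overline R}k\cong k\otimes^{\mathbf L}_Rk$. This gives an exact sequence $\Tor_2^{\overline R}(k,k)\xrightarrow{\delta_*}k\to\m/\m^2\to\m/(\m^2+xR)\to0$. As $x\notin\m^2$, the last map has one-dimensional kernel, so $k\to\m/\m^2$ is injective and $\delta_*=0$. Finally, $\delta_*$ is the functional corresponding to $\delta$ under $\Ext^2_{\overline R}(k,k)\cong\Hom_k(\Tor_2^{\overline R}(k,k),k)$, so $\delta=0$.
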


\begin{proof}
When $R$ is Gorenstein, the assertion follows from Remark \ref{4}(2) and \cite[Theorem 5.6]{T23}.
We consider the case where $R$ is not Gorenstein.
Set $D_{R/xR}=\RHom_R(R/xR, D)$, which is a dualizing complex of $R/xR$.

(1) It suffices to show that $k\in\thick_{\Db(R/xR)}\{R/xR, D_{R/xR}\}$ implies $k\in\thick_{\Db(R)}\{R, D\}$; see Remark \ref{4}(2).
The two exact triangles $R \xrightarrow{x} R \to R/xR\rightsquigarrow$ and $D_{R/xR} \to D\xrightarrow{x} D \rightsquigarrow$ in $\Db(R)$ show that $R/xR$ and $D_{R/xR}$ belong to $\thick_{\Db(R)}\{R,D\}$.
Using the natural exact functor $\Db(R/xR)\to\Db(R)$, we are done.

(2) By Remark \ref{4}(2), assuming $k\in\thick_{\Db(R)}\{R, D\}$, we have only show $k\in\thick_{\Db(R/xR)}\{R/xR, D_{R/xR}\}$.
The maximal ideal $\m$ is in $\thick_{\Db(R)}\{R,D\}$. 
The exact functor $(-)\otimes_R^{\mathbf L} R/xR:\Db(R)\to \Db(R/xR)$ yields $\m\otimes_R^{\mathbf L} R/xR\in\thick_{\Db(R/xR)}\{R/xR, D\otimes_R^{\mathbf L} R/xR\}$.
As $x\notin\m^2$ is $R$-regular, we have $\m\otimes_R^{\mathbf L} R/xR\cong\m\otimes_R R/xR\cong k\oplus\m/x R$.
Therefore, $k$ belongs to $\thick_{\Db(R/xR)}\{R/xR, D\otimes_R^{\mathbf L} R/xR\}$.
There are isomorphisms
$$
\begin{array}{l}
\RHom_{R/xR}(D\otimes_R^{\mathbf L}R/xR, D_{R/xR})
=\RHom_{R/xR}(D\otimes_R^{\mathbf L}R/xR,\RHom_R(R/xR, D))\\
\qquad\cong \RHom_R(D\otimes_R^{\mathbf L}R/xR, D)
\cong \RHom_R(R/xR, \RHom_R(D, D))
\cong \RHom_R(R/xR, R) \cong (R/xR)[-1],\\
D\otimes_R^{\mathbf L}R/xR 
\cong \RHom_{R/xR}( \RHom_{R/xR}(D\otimes_R^{\mathbf L}R/xR, D_{R/xR}), D_{R/xR})\\
\qquad\cong \RHom_{R/xR}((R/xR)[-1], D_{R/xR}) \cong  D_{R/xR}\,[1]
\end{array}
$$
in $\Db(R/xR)$.
It is observed that the residue field $k$ belongs to $\thick_{\Db(R/xR)}\{R/xR, D_{R/xR}\}$.
\end{proof}

Similarly as in the proof of \cite[Corollary 5.8]{T23}, the corollary below is deduced from Proposition \ref{qdomdeform}.

\begin{cor}\label{compqdom}
Let $(R,\m)$ be a local ring with a dualizing complex $D$.
Then $R$ is quasi-dominant if and only if so is the formal power series ring $R[\![X]\!]$ over $R$, if and only if so is the ($\m$-adic) completion $\widehat{R}$ of $R$.
\end{cor}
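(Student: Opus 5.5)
We treat the two equivalences separately. Throughout we use that $R[\![X]\!]$ and $\widehat{R}$ are local rings admitting dualizing complexes, namely $D\otimes_R R[\![X]\!]$ and $D\otimes_R\widehat{R}$; both maps are flat local homomorphisms whose closed fibers are Gorenstein (the fibers are $k[\![X]\!]$ and $k$).

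\emph{Power series ring.} Set $S=R[\![X]\!]$. The element $X$ is $S$-regular, lies in $\m S+XS$ but not in its square, and $S/XS\cong R$. Proposition~\ref{qdomdeform}(1) applied to $x=X$ shows that if $R$ is quasi-dominant then so is $S$. Conversely, Proposition~\ref{qdomdeform}(2) shows that if $S$ is quasi-dominant then so is $S/XS=R$. Both applications use the dualizing complex $D_S=D\otimes_RS$.

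\emph{Completion.} By Remark~\ref{4}(2), quasi-dominance of $R$ splits into two cases: $R$ is Gorenstein and dominant, or $R$ is non-Gorenstein with $k\in\thick_{\Db(R)}\{R,D\}$. Gorensteinness ascends and descends along $R\to\widehat R$. Dominance behaves the same way by \cite[Corollary 5.8]{T23}, which settles the Gorenstein case. In the non-Gorenstein case we must show
\[
k\in\thick_{\Db(R)}\{R,D\} \iff k\in\thick_{\Db(\widehat R)}\{\widehat R,\widehat D\},\qquad \widehat D=D\otimes_R\widehat R .
\]

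For ($\Rightarrow$), apply the exact functor $-\otimes_R\widehat R$. It sends $R\mapsto\widehat R$, $D\mapsto\widehat D$ and $k\mapsto k$.

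For ($\Leftarrow$), which is the main obstacle, we imitate the argument of \cite[Corollary 5.8]{T23}. The key fact is that the restriction functor $\Db(\widehat R)\to\Db(R)$ is fully faithful on complexes with finite-length homology, and that every such $\widehat R$-complex is of the form $Y\otimes_R\widehat R$ for a unique $Y\in\Db(R)$. We would try the following concrete route. Choose a Koszul complex $K$ on a system of parameters of $R$, so that $K\otimes_R\widehat R=\widehat K$. Since $k\in\thick\{\widehat R,\widehat D\}$, tensoring with $\widehat K$ gives
\[
k\otimes\widehat K\in\thick_{\Db(\widehat R)}\{\widehat K,\,\widehat D\otimes\widehat K\}.
\]
All objects here have finite-length homology and are completions of $R$-objects, namely $k\otimes K$, $K$ and $D\otimes K$. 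Using the equivalence on finite-length-homology objects, we transport the building of $k\otimes\widehat K$ back to $\Db(R)$. This gives $k\otimes_R K\in\thick\{K,D\otimes K\}\subseteq\thick\{R,D\}$. Finally, $k\otimes_R K$ is a direct sum of shifts of $k$, so $k$ is a direct summand of it and lies in $\thick\{R,D\}$.

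The delicate point is to justify that a thick closure computed inside the finite-length-homology subcategory of $\Db(\widehat R)$ descends to $\Db(R)$. Equivalently, one needs that the relevant cones can be taken inside the image of $R$-objects. This holds because $\Db_{\mathrm{fl}}(R)\simeq\Db_{\mathrm{fl}}(\widehat R)$ is an equivalence of triangulated categories, so thick closures correspond.
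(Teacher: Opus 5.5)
Your argument is correct. The power-series step is exactly what the paper does: Proposition~\ref{qdomdeform}(1) and (2) applied to the regular element $X\notin(\m,X)^2$ of $R[\![X]\!]$.

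For the completion you take a different route. The paper, following the proof of \cite[Corollary 5.8]{T23}, says it deduces the whole corollary from Proposition~\ref{qdomdeform}. The natural way to do this, and presumably the argument of \cite[Corollary 5.8]{T23}, is a standard presentation of the completion. With $\m=(x_1,\dots,x_n)$ one has $\widehat R\cong R[\![X_1,\dots,X_n]\!]/(X_1-x_1,\dots,X_n-x_n)$. The elements $X_i-x_i$ form a regular sequence. Each lies outside the square of the maximal ideal of the relevant quotient, since its $X_i$-coefficient is a unit. So parts (1) and (2) carry quasi-dominance up to $R[\![X_1,\dots,X_n]\!]$ and back down to $\widehat R$. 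That argument is uniform: no Gorenstein/non-Gorenstein split, no separate citation for dominance of $\widehat R$, and no descent machinery.

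Your route instead proves descent of the membership $k\in\thick\{\widehat R,\widehat D\}$ directly. You pass to finite-length homology with a Koszul complex and use the triangle equivalence $-\otimes_R\widehat R\colon\Db_{\mathrm{fl}}(R)\to\Db_{\mathrm{fl}}(\widehat R)$. For completeness you should justify two points:
\begin{enumerate}
\item Why this is an equivalence: restriction of scalars is a quasi-inverse, since $M\otimes_R\widehat R\cong M$ for finite-length $M$.
\item That $\Db_{\mathrm{fl}}(\widehat R)$ is thick in $\Db(\widehat R)$, so the thick closure of $\{\widehat K,\widehat D\otimes\widehat K\}$ stays inside it.
\end{enumerate}

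Your approach gives a statement independent of $D$ and of deformation: for any $\mathcal{E}\subseteq\Db(R)$, one has $k\in\thick_{\Db(R)}\mathcal{E}$ if and only if $k\in\thick_{\Db(\widehat R)}(\mathcal{E}\otimes_R\widehat R)$. The cost is the case split and the reliance on \cite[Corollary 5.8]{T23} for the Gorenstein case.
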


Applying the above result, we can remove the excellence assumption from Example \ref{1}(2).

\begin{cor}\label{8}
Let $R$ be a Cohen--Macaulay local ring with a canonical module.
Suppose that $R$ has finite CM-representation type.
Then $R$ is quasi-dominant.
\end{cor}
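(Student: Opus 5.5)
We reduce to the complete (hence excellent) case and then invoke Example \ref{1}(2). By Corollary \ref{compqdom}, $R$ is quasi-dominant if and only if its $\m$-adic completion $\widehat{R}$ is. Note that $R$ has a dualizing complex, namely its canonical module $\omega_R$, since $R$ is Cohen--Macaulay with a canonical module. So Corollary \ref{compqdom} applies. It therefore suffices to check that $\widehat{R}$ is a Cohen--Macaulay local ring with a canonical module and of finite CM-representation type. Once this is done, $\widehat{R}$ is excellent (being complete), so Example \ref{1}(2), that is, \cite[Proposition 10.15(4)]{T23}, gives that $\widehat{R}$ is quasi-dominant, and hence so is $R$.

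The first two properties of $\widehat{R}$ are standard: $\widehat{R}$ is Cohen--Macaulay, and $\omega_R\otimes_R\widehat{R}$ is a canonical module of $\widehat{R}$.

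The key step, and the main obstacle, is the transfer of finite CM-representation type from $R$ to $\widehat{R}$. This ascent is not automatic in general. For instance, it is known to fail when $R$ is not excellent and $\widehat{R}$ acquires non-isolated singularities. The approach I would try is the following.

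First, since $R$ has finite CM-representation type and admits a canonical module, results of Huneke--Leuschke and Leuschke--Wiegand show that $R$ has at most an isolated singularity.

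Next, I would argue that $\widehat{R}$ also has an isolated singularity. One route is the theorem of Leuschke--Wiegand that a CM local ring of finite CM type with a canonical module satisfies ascent to the completion, i.e.\ $\widehat{R}$ has finite CM type; I would cite it directly if available.

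Otherwise, I would argue by hand. The isolated singularity of $R$ gives the needed regularity of $\widehat{R}$ on the punctured spectrum; this is the delicate point in the non-excellent case. Granting it, every MCM $\widehat{R}$-module is extended from $R$ by Elkik/Wiegand-type descent, so $\widehat{R}$ inherits finite CM type.

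If this ascent proves too delicate, a fallback avoids full finite CM type. I would instead show directly that $k\in\thick_{\Db(R)}\{R,\omega_R\}$ when $R$ is non-Gorenstein, by working with the finitely many indecomposable MCM modules and Auslander--Reiten sequences. In the Gorenstein case, a Gorenstein ring of finite CM type is a hypersurface, and hence dominant by Example \ref{1}(1).
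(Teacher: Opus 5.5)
Your main line is the paper's proof: pass to $\widehat{R}$ via Corollary \ref{compqdom} (with $\omega_R$ as the dualizing complex), note that $\widehat{R}$ is an excellent Cohen--Macaulay local ring of finite CM-representation type, and apply Example \ref{1}(2). The one nontrivial input is the ascent of finite CM-representation type to the completion for a Cohen--Macaulay local ring with a canonical module, which the paper simply cites (\cite[Remark 4.9]{DKLO25}); commit to that citation and drop the ``by hand'' route, which grants the regularity of $\widehat{R}$ on the punctured spectrum (the whole difficulty), and the Auslander--Reiten fallback, which is not a proof as written and uses Auslander--Reiten sequences that are only generally available for henselian rings.
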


\begin{proof}
The completion $\widehat{R}$ is also a Cohen--Macaulay local ring of finite CM-representation type by \cite[Remark 4.9]{DKLO25}.
As $\widehat{R}$ is excellent, Example \ref{1}(2) implies that $\widehat{R}$ is quasi-dominant, and so is $R$ by Corollary \ref{compqdom}.
\end{proof}

One of the main advantages of dominance is that it provides a unified framework for subcategory classification; under suitable assumptions related to dominance, resolving subcategories and thick subcategories can be described in terms of specialization-closed subsets of $\operatorname{Spec} R$.
In what follows, we shall obtain restricted correspondences under weaker hypotheses to apply them to locally quasi-dominant rings. 

Denote by $\c(R)$ the subcategory of $\mod R$ consisting of modules $M$ with $\depth M_\p\ge\depth R_\p$ for all prime ideals $\p$ of $R$.
Let $\M$ be a subcategory of $\mod R$.
We say that a subcategory $\X$ of $\M$ is {\em thick} if it is closed under direct summands and short exact sequences in $\M$, that is, $\X$ satisfies the following conditions.
\begin{itemize}
\item
Let $A$ be an $R$-module and $B$ a direct summand of $A$.
If $A$ belongs to $\X$, then $B$ also belongs to $\X$.
\item
Let $0\to A\to B\to C\to0$ be a short exact sequence of $R$-modules with $A,B,C\in\M$.
If two of the three modules $A,B,C$ belong to $\X$, then the third module also belongs to $\X$.
\end{itemize}
The \textit{thick closure} $\thick_\M \X$ of $\X$ in $\M$ is defined as the smallest thick subcategory of $\M$ containing $\X$.
The following proposition says that thick subcategories containing $R$ bijectively correspond in a natural way.

\begin{prop}\label{2}
There are one-to-one correspondences
$$
\xymatrix@C+2pc{
{\left\{
\begin{matrix}
\text{Thick subcategories}\\
\text{of $\Db(R)$ containing $R$}
\end{matrix}
\right\}}
\ar@<.7mm>[r]^-{()\cap\mod R}
&
{\left\{
\begin{matrix}
\text{Thick subcategories}\\
\text{of $\mod R$ containing $R$}
\end{matrix}
\right\}}
\ar@<.7mm>[l]^-{\thick_{\Db(R)}()}
\ar@<.7mm>[r]^-{()\cap\c(R)}
&
{\left\{
\begin{matrix}
\text{Thick subcategories}\\
\text{of $\c(R)$ containing $R$}
\end{matrix}
\right\}}.
\ar@<.7mm>[l]^-{\thick_{\mod R}()}}
$$
\end{prop}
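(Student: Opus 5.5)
We prove each of the two bijections separately by checking that the maps are well defined and mutually inverse. Well-definedness is routine: for a thick subcategory $\T$ of $\Db(R)$ containing $R$, the intersection $\T\cap\mod R$ is closed under summands, and a short exact sequence of modules gives an exact triangle in $\Db(R)$, so $\T\cap\mod R$ satisfies two-out-of-three. Likewise, if $\X$ is thick in $\mod R$, then $\X\cap\c(R)$ is thick in $\c(R)$, because short exact sequences in $\c(R)$ are short exact sequences in $\mod R$. The maps in the other direction are thick closures, so they land in the correct sets by definition.

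For the first bijection, we must show two identities. The first is $\thick_{\Db(R)}(\X)\cap\mod R=\X$ for $\X$ thick in $\mod R$ containing $R$. The second is $\thick_{\Db(R)}(\T\cap\mod R)=\T$.

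For the second identity, take $X\in\T$. Since $R\in\T$, truncating a free resolution shows that a high enough syzygy-type module $\Omega^n X$ fits in an exact triangle with $X$ and a bounded complex of free modules. Concretely, for a bounded-above free resolution $F\to X$, the brutal truncation $\sigma_{\ge -n}F$ gives a triangle $P\to X\to \Omega[?]\rightsquigarrow$ with $P$ perfect and the other term a shifted module. That module lies in $\T\cap\mod R$, so $X\in\thick(\T\cap\mod R)$.

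The first identity is the main obstacle. We need $M\in\thick_{\Db(R)}\X$ with $M$ a module to force $M\in\X$. I would consider the subcategory $\Y$ of $\Db(R)$ consisting of complexes $Y$ such that, for some (equivalently all) large $n$, the $n$-th syzygy $\Omega^n Y$ (the truncated module of a free resolution) lies in $\X$, and then show $\Y$ is thick. Closure under shifts is clear, and closure under summands follows from summands of syzygies up to free summands. For cones, the horseshoe/mapping cone of free resolutions gives short exact sequences $0\to\Omega^nY'\to\Omega^nY\oplus F\to\Omega^nY''\to0$ up to free summands, so two-out-of-three in $\X$ applies. Thus $\thick\X\subseteq\Y$. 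It remains to see that a module $M$ with $\Omega^nM\in\X$ is itself in $\X$. This is where we need to be careful: thickness in $\mod R$ only gives syzygy closure, not the converse. However, $M\in\thick_{\Db(R)}\X$, so I would instead define $\Y$ as the set of $Y$ such that \emph{every} truncation module occurring is in $\X$, or argue by induction on the length of building $M$ via the ``ghost'' filtration. If that fails, I would use that $\X$ is thick in $\mod R$ containing $R$ and hence closed under cosyzygies as well: from $0\to\Omega M\to F\to M\to0$ with $\Omega M,F\in\X$ we get $M\in\X$. Therefore $\Omega^nM\in\X$ implies $M\in\X$ by descending induction, which settles the first identity.

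For the second bijection, the identity $\thick_{\mod R}(\Z)\cap\c(R)=\Z$ and the identity $\thick_{\mod R}(\X\cap\c(R))=\X$ follow by the same syzygy device. Every module $M$ has $\Omega^dM\in\c(R)$, where $d=\dim R$, or more precisely by the depth lemma a sufficiently high syzygy lies in $\c(R)$ (if $\dim R$ is infinite, one uses the local bound $\depth R_\p$ and noetherian finiteness of the non-CM behaviour). Hence $M\in\X$ iff $\Omega^nM\in\X\cap\c(R)$, by two-out-of-three along $0\to\Omega^{i+1}M\to F_i\to\Omega^iM\to0$. Conversely, for $\Z$ thick in $\c(R)$, the subcategory $\{M:\Omega^nM\in\Z \text{ for large } n\}$ is thick in $\mod R$, which I would check using the horseshoe lemma as before. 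It contains $\Z$ and meets $\c(R)$ exactly in $\Z$, using that syzygies of modules in $\c(R)$ remain in $\c(R)$ and that the cosyzygy two-out-of-three holds within $\c(R)$. The delicate point is the uniform bound for ``large $n$'' when $\dim R=\infty$; there I would handle each module individually, since only finitely many depths matter for a fixed module.
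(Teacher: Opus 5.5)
Your proof is correct. For the second bijection you follow the paper's argument essentially verbatim.
\begin{itemize}
\item You get $\X=\thick_{\mod R}(\X\cap\c(R))$ by passing to a syzygy that lies in $\c(R)$ and climbing back along $0\to\syz^{i+1}M\to F_i\to\syz^iM\to0$.
\item You get $\thick_{\mod R}(\Z)\cap\c(R)=\Z$ from the thick subcategory of modules whose high syzygies lie in $\Z$, followed by the same descending induction carried out inside $\c(R)$.
\end{itemize}

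The genuine difference is the first bijection. The paper does not prove it; it quotes Krause--Stevenson \cite[Theorem 1]{KS} (see also \cite[Lemma 10.5]{T23}). You prove it by hand.
\begin{itemize}
\item Brutal truncation of a free resolution gives a triangle $P\to X\to N[n+1]\rightsquigarrow$ with $P$ perfect and $N$ a module. Hence $N\in\T\cap\mod R$ and $\thick_{\Db(R)}(\T\cap\mod R)=\T$.
\item The subcategory of complexes whose high syzygies lie in $\X$ is thick. Indeed, lifting a morphism to a chain map of free resolutions, the mapping-cone sequence is degreewise split and yields short exact sequences among the high syzygies, up to free summands. Descending induction then gives $\thick_{\Db(R)}\X\cap\mod R=\X$.
\end{itemize}
This makes the proposition self-contained and shows both bijections run on the same syzygy device. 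The citation is shorter, and the Krause--Stevenson theorem holds in much greater generality.

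Two small remarks on the write-up.

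First, your worry that ``thickness in $\mod R$ only gives syzygy closure, not the converse'' is unfounded. Since $R\in\X$, two-out-of-three applied to $0\to\syz M\to F\to M\to0$ gives cosyzygy closure at once. So you can delete the alternatives you float (requiring every truncation module to lie in $\X$, ghost filtrations); the descending induction you settle on is exactly right.

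Second, when $\dim R<\infty$ the depth lemma gives $\syz^{\dim R}M\in\c(R)$ directly. Your parenthetical for infinite Krull dimension (``only finitely many depths matter for a fixed module'') is not an argument. The paper, however, likewise just asserts that some $n$ with $\syz^nX\in\c(R)$ exists, so your proof is no weaker than the source on this point.
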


\begin{proof}
The first pair of mutually inverse bijections is due to Krause and Stevenson \cite[Theorem 1]{KS}; see also \cite[Lemma 10.5]{T23}.
To get the second pair of mutually inverse bijections, fix a thick subcategory $\X$ of $\mod R$ containing $R$, and a thick subcategory $\Y$ of $\c(R)$ containing $R$.
Then, clearly, $\X\cap\c(R)$ is a thick subcategory of $\c(R)$ containing $R$, and $\thick_{\mod R}\Y$ is a thick subcategory of $\mod R$ containing $R$.

It is obvious that $\thick_{\mod R}(\X\cap\c(R))$ is contained in $\X$.
Pick any $X\in\X$.
There exists an integer $n\ge0$ such that the $n$th syzygy $\syz^nX$ belongs to $\c(R)$.
Since $\X$ is thick and contains $R$, every finitely generated projective $R$-module and $\syz^nX$ belong to $\X\cap\c(R)$.
There is a family $\{0\to\syz^{i+1}X\to P_i\to\syz^iX\to0\}_{i=0}^{n-1}$ of short exact sequences in $\mod R$ with each $P_i$ projective, so that $P_i\in\X\cap\c(R)$.
We inductively observe that $X\in\thick_{\mod R}(\X\cap\c(R))$.
We thus obtain the equality $\X=\thick_{\mod R}(\X\cap\c(R))$.

We claim that for every $M\in\thick_{\mod R}\Y$ there exists an integer $r\ge0$ such that $\syz^rM$ is in $\Y$.
Indeed, consider the subcategory $\Z=\{Z\in\mod R\mid\text{$\syz^iZ$ is in $\Y$ for all $i\gg0$}\}$ of $\mod R$.
Then $\Z$ is a thick subcategory of $\mod R$ containing $\Y$.
Hence $\Z$ contains $\thick_{\mod R}\Y$, and we find an integer $r\ge0$ with $\syz^rM\in\Y$.

Clearly, $\Y$ is contained in $(\thick_{\mod R}\Y)\cap\c(R)$.
Pick any $R$-module $M$ in $(\thick_{\mod R}\Y)\cap\c(R)$.
The above claim implies that $\syz^rM$ belongs to $\Y$ for some $r\ge0$.
We have a series $\{0\to\syz^{j+1}M\to Q_j\to\syz^jM\to0\}_{j=0}^{r-1}$ of exact sequences in $\mod R$ with $Q_j$ projective for all $0\le j\le r-1$.
Note that $\syz^{j+1}M,Q_j,\syz^jM$ are in $\c(R)$ and $Q_j$ is in $\Y$.
As $\Y$ is thick in $\c(R)$, we deduce that $M$ is in $\Y$.
We obtain $\Y=(\thick_{\mod R}\Y)\cap\c(R)$.
\end{proof}

We now prepare the terminology needed to state the main result of this section. 
We put:
$$
\begin{array}{l}
\X_\p=\{X_\p\mid X\in\X\}\subseteq\Db(R_\p)\text{ for }\X\subseteq\Db(R)\text{ and }\p\in\spec R,\\
\nf(M)=\{\p\in\spec R\mid\text{$M_\p$ is nonfree over $R_\p$}\}\text{ for }M\in\mod R,\\
\ipd(X)=\{\p\in\spec R\mid\text{$X_\p$ has infinite projective dimension over $R_\p$}\}\text{ for }X\in\Db(R),\\
\ipd(\X)=\bigcup_{X\in\X}\ipd(X)\text{ for }\X\subseteq\Db(R)\text{ and }\nf(\Y)=\bigcup_{Y\in\Y}\nf(Y)\text{ for }\Y\subseteq\mod R,\\
\ipd^{-1}(\Phi)=\{M\in\mod R\mid\ipd(M)\subseteq\Phi\}\text{ and }\nf^{-1}(\Phi)=\{C\in\c(R)\mid\nf(C)\subseteq\Phi\}\text{ for }\Phi\subseteq\spec R,\\
\sing R=\{\p\in\spec R\mid\text{$R_\p$ is not regular\}}\text{ and }\c_0(R)=\{M\in\c(R)\mid\nf(M)\subseteq\Max R\}.
\end{array}
$$
The set $\sing R$ is called the \textit{singular locus} of $R$.
A subset $W$ of $\spec R$ is called \textit{specialization-closed} if $\V(\p)$ is contained in $W$ for all $\p\in W$.
Denote by $\Dsg(R)$ the {\em singularity category} of $R$, that is, the Verdier quotient of $\Db(R)$ by the objects of finite projective dimension.
Let $\pi:\Db(R)\to\Dsg(R)$ be the canonical functor. 
A subcategory of $\mod R$ is called {\em resolving} if it contains $R$ and is closed under direct summands, extensions, and syzygies.
The following theorem is the most general result in this paper concerning subcategory classification.
Here, for subsets $A,B,X$ of $\spec R$, we say that $X$ is {\em between $A$ and $B$} if the inclusions $A\subseteq X\subseteq B$ hold.

\begin{thm}\label{3}
For a subcategory $\E$ of $\Db(R)$ containing $R$, the following two conditions are equivalent.
\begin{enumerate}[\rm(1)]
\item
For every prime ideal $\p$ of $R$, and for every object $X$ of $\Db(R_\p)$ which has infinite projective dimension over $R_\p$, the residue field $\kappa(\p)$ belongs to the thick closure $\thick_{\Db(R_\p)}(\E_\p\cup\{X\})$.
\item
There is a commutative diagram of mutually inverse bijections
$$
\xymatrix@R-1pc@C+2pc{
{\left\{
\begin{matrix}
\text{Resolving subcategories $\X$ of $\mod R$}\\
\text{with $\X\subseteq \c(R)$ and $\E\subseteq\thick_{\Db(R)}\X$}
\end{matrix}
\right\}}
\ar@<.7mm>[r]^-\nf
\ar@{=}[d]
&
{\left\{
\begin{matrix}
\text{Specialization-closed subsets of $\spec R$}\\
\text{between $\ipd(\E)$ and $\sing R$}
\end{matrix}
\right\}}
\ar@<.7mm>[l]^-{\nf^{-1}}
\ar@<.7mm>[d]^-{\ipd^{-1}}
\\
{\left\{
\begin{matrix}
\text{Thick subcategories $\X$ of $\c(R)$}\\
\text{with $\E\subseteq\thick_{\Db(R)}\X$}
\end{matrix}
\right\}}
\ar@<.7mm>[r]^-{\thick_{\mod R}()}
\ar@<.7mm>[d]^-{\thick_{\Dsg(R)}\pi()}
&
{\left\{
\begin{matrix}
\text{Thick subcategories $\X$ of $\mod R$}\\
\text{with $\E\subseteq\thick_{\Db(R)}\X$}
\end{matrix}
\right\}}
\ar@<.7mm>[l]^-{()\cap\c(R)}
\ar@<.7mm>[u]^-\ipd
\ar@<.7mm>[d]^-{\thick_{\Db(R)}()}
\\
{\left\{
\begin{matrix}
\text{Thick subcategories of $\Dsg(R)$}\\
\text{containing $\pi(\E)$}
\end{matrix}
\right\}}
\ar@<.7mm>[u]^-{\pi^{-1}()\cap\c(R)}
\ar@<.7mm>[r]^-{\pi^{-1}}
&
{\left\{
\begin{matrix}
\text{Thick subcategories of $\Db(R)$}\\
\text{containing $\E$}
\end{matrix}
\right\}.}
\ar@<.7mm>[u]^-{()\cap\mod R}
\ar@<.7mm>[l]^-\pi
}
$$
\end{enumerate}
\end{thm}

\begin{proof}
We focus on the two squares in the diagram in (2): the one (call $\U$) consisting of the upper four sets, and the other (call $\L$) consisting of the lower four sets.
In $\L$, the two horizontal one-to-one correspondences and the right vertical one are obtained as restrictions of Proposition \ref{2} and \cite[Lemma 10.5]{T23}.
Applying \cite[Remark 10.2(9)]{T23}, we deduce the commutativity of $\L$ and the left vertical one-to-one correspondence.

Next, we consider the square $\U$.
It is straightforward to verify that the maps $\nf,\ipd,\ipd^{-1}$ in $\U$ are well-defined.
%Note that $\F:=\ipd^{-1}(\ipd(\E))$ is a thick subcategory of $\Db(R)$ containing $\E$. The commutativity of $\L$ yields $\F=\thick_{\Db(R)}(\F\cap\c(R))$, while it is easily seen that $\F\cap\c(R)=\nf^{-1}(\ipd(\E))$. It follows that $\F=\thick_{\Db(R)}(\nf^{-1}(\ipd(\E)))$.
%Using this equality, we easily deduce that the map $\nf^{-1}$ in the square $\U$ is well-defined.
We have the commutative diagram below, where $\theta$ is an inclusion map.
Call this diagram $\VV$.
$$
\xymatrix@R-1pc@C+2pc{
{\left\{
\begin{matrix}
\text{Resolving subcategories $\X$ of $\mod R$}\\
\text{with $\X\subseteq \c(R)$ and $\E\subseteq\thick_{\Db(R)}\X$}
\end{matrix}
\right\}}
\ar[r]^-\nf
&
{\left\{
\begin{matrix}
\text{Specialization-closed subsets of $\spec R$}\\
\text{between $\ipd(\E)$ and $\sing R$}
\end{matrix}
\right\}}
\\
{\left\{
\begin{matrix}
\text{Thick subcategories $\X$ of $\c(R)$}\\
\text{with $\E\subseteq\thick_{\Db(R)}\X$}
\end{matrix}
\right\}}
\ar@<1.8mm>[r]^-{\thick_{\mod R}()}_-{\cong}
\ar@{^{(}->}[u]_-\theta
&
{\left\{
\begin{matrix}
\text{Thick subcategories $\X$ of $\mod R$}\\
\text{with $\E\subseteq\thick_{\Db(R)}\X$}
\end{matrix}
\right\}.}
\ar[u]^-\ipd\ar@<1.8mm>[l]^-{()\cap\c(R)}
}
$$
Note that $(\theta\circ()\cap\c(R)\circ\ipd^{-1})(\Phi)=\nf^{-1}(\Phi)$ for each subset $\Phi$ of $\spec R$.
Thus the map $\nf^{-1}$ in the square $\U$ is well-defined.
For a specialization-closed subset $W$ of $\spec R$ contained in $\sing R$, it follows from \cite[(b) and (g) in the proof of Theorem 10.10]{T23} that $\nf(\nf^{-1}(W))=W=\ipd(\ipd^{-1}(W))$.

Now, suppose that (1) is satisfied.
To observe that (2) holds, it suffices to prove that $\X=\nf^{-1}(\nf(\X))$ for any resolving subcategory $\X$ of $\mod R$ with $\X\subseteq \c(R)$ and $\E\subseteq\thick_{\Db(R)}\X$. 
Indeed, suppose that this statement is shown.
Then the maps $\nf,\nf^{-1}$ in $\U$ are mutually inverse bijections.
The commutativity of $\VV$ implies that the map $\ipd$ in $\VV$ is injective, which implies that the maps $\ipd,\ipd^{-1}$ in $\U$ are mutually inverse bijections.
The inclusion map $\theta$ in $\VV$ turns out to be an identity map, and assertion (2) follows.

Clearly, $\X$ is contained in $\nf^{-1}(\nf(\X))$. 
Let $M$ be an $R$-module in $\nf^{-1}(\nf(\X))$.
Similarly as in \cite[Remark 10.13]{T23}, we may assume that $(R,\m,k)$ is local.
We may also assume that $M$ is nonfree over $R$.
Fix $\p\in\nf(\X)$.
As $\X$ is contained in $\c(R)$, there exists an $R$-module $Z\in\X$ such that $Z_\p$ has infinite projective dimension over $R_\p$.
By our assumption that (1) holds, we have $\kappa(\p)\in\thick_{\Db(R_\p)}(\E_\p\cup\{Z_\p\})\subseteq\thick_{\Db(R_\p)}\X_\p$.
Choose an $R$-module $Y\in\X$ with $\kappa(\p)\in\thick_{\Db(R_\p)}Y_\p$.
It follows from \cite[(4)$\Rightarrow$(1) in Lemma 10.6]{T23} and \cite[Lemma 3.2(1)]{crspd} that $\syz^{\depth R_\p}\kappa(\p)$ belongs to the additive closure $\add(\X_\p)$.
In view of \cite[Remark 3.3(8)]{T23}, we have that $\c_0(R_\p)$ is contained in $\add(\X_\p)$ for all $\p\in\nf(\X)$.
Letting $\p=\m$, we get $\c_0(R)\subseteq\X$.
Apply \cite[Theorem 3.8]{T23} to obtain a short exact sequence $0\to C\to M\oplus N\to X\to0$ in $\mod R$ with $C\in\c_0(R)$ and $X\in\X$.
Consequently, $M$ is in $\X$.
We conclude that the equality $\X=\nf^{-1}(\nf(\X))$ holds.

Finally, we show that (2) implies (1).
Fix a prime ideal $\p$ of $R$ and an object $X\in\Db(R_\p)$ of infinite projective dimension.
Choose an $R$-module $M$ with $\thick_{\Db(R_\p)}\{R_\p, X\}=\thick_{\Db(R_\p)}\{R_\p, M_\p\}$, which implies $\p\in\ipd(M)$.
Using the mutually inverse bijections $\ipd$ and $\ipd^{-1}$ in the diagram in (2), we get
$$
R/\p\in \ipd^{-1}(\ipd(M))\subseteq \thick_{\Db(R)}(\E\cup\{M\})\cap \mod R.
$$
Hence $\kappa(\p)$ belongs to $\thick_{\Db(R_\p)}(\E_\p\cup\{M_\p\})$, which coincides with $\thick_{\Db(R_\p)}(\E_\p\cup\{X\})$, since $R\in\E$.
\end{proof}

\begin{rem}
The proof of Theorem \ref{3} says that, for the maps $\nf,\nf^{-1},\ipd,\ipd^{-1}$ which appear in the diagram in the theorem, the following two statements hold true, where $\id$ denotes the identity map.
\begin{enumerate}[(1)]
\item
One has the equalities $\nf\circ\nf^{-1}=\id$ and $\ipd \circ \ipd^{-1}=\id$.
\item
Theorem \ref{3}(2) is equivalent to the equality $\nf^{-1}\circ\nf=\id$, and to the equality $\ipd^{-1}\circ\ipd=\id$.
\end{enumerate}
%Every $\mathcal{C}(R)$ appearing in the diagram of Theorem \ref{3} can be replaced by an arbitrary dominant resolving subcategory in the sense of \cite[Definition 4.4]{crspd}.
\end{rem}

Taking $\E=\{R, D\}$ in Theorem \ref{3}, we obtain a classification of subcategories over locally quasi-dominant rings, which (partially) generalizes \cite[Corollary 10.17]{T23} to the case where $R$ is not Cohen--Macaulay.
The \textit{non-Gorenstein locus} $\ng R$ of $R$ is defined as the set of prime ideals $\p$ of $R$ such that $R_\p$ is not Gorenstein.

\begin{cor}\label{lqdom}
Suppose that $R$ admits a dualizing complex $D$.
Then the ring $R$ is locally quasi-dominant if and only if one has the following mutually inverse bijections.
$$
\xymatrix@R-1pc@C+2pc{
{\left\{
\begin{matrix}
\text{Resolving subcategories $\X$ of $\mod R$}\\
\text{with $\X\subseteq \c(R)$ and $D\in\thick_{\Db(R)}\X$}
\end{matrix}
\right\}}
\ar@<.7mm>[r]^-\nf
\ar@{=}[d]
&
{\left\{
\begin{matrix}
\text{Specialization-closed subsets of $\spec R$}\\
\text{between $\ng R$ and $\sing R$}
\end{matrix}
\right\}}
\ar@<.7mm>[l]^-{\nf^{-1}}
\ar@<.7mm>[d]^-{\ipd^{-1}}
\\
{\left\{
\begin{matrix}
\text{Thick subcategories $\X$ of $\c(R)$}\\
\text{with $R, D\in\thick_{\Db(R)}\X$}
\end{matrix}
\right\}}
\ar@<.7mm>[r]^{\thick_{\mod R}()}
\ar@<.7mm>[d]^{\thick_{\Dsg(R)}\pi()}
&
{\left\{
\begin{matrix}
\text{Thick subcategories $\X$ of $\mod R$}\\
\text{with $R, D\in\thick_{\Db(R)}\X$}
\end{matrix}
\right\}}
\ar@<.7mm>[l]^-{()\cap\c(R)}
\ar@<.7mm>[u]^-\ipd
\ar@<.7mm>[d]^{\thick_{\Db(R)}()}
\\
{\left\{
\begin{matrix}
\text{Thick subcategories of $\Dsg(R)$}\\
\text{containing $\pi(D)$}
\end{matrix}
\right\}}
\ar@<.7mm>[u]^-{\pi^{-1}()\cap\c(R)}
\ar@<.7mm>[r]^-{\pi^{-1}}
&
{\left\{
\begin{matrix}
\text{Thick subcategories of $\Db(R)$}\\
\text{containing $R$ and $D$}
\end{matrix}
\right\}.}
\ar@<.7mm>[u]^-{()\cap\mod R}
\ar@<.7mm>[l]^-\pi
}
$$
\end{cor}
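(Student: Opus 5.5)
We take $\E=\{R,D\}$ in Theorem \ref{3} and verify two things: that condition (1) of Theorem \ref{3} for this $\E$ is exactly local quasi-dominance, and that the diagram in Theorem \ref{3}(2) becomes the displayed diagram.

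First, localization. Since $D_\p$ is a dualizing complex of $R_\p$ for every $\p\in\spec R$, we have $\E_\p=\{R_\p,D_\p\}$. Condition (1) of Theorem \ref{3} then reads: for every $\p$ and every $X\in\Db(R_\p)$ of infinite projective dimension, $\kappa(\p)\in\thick_{\Db(R_\p)}\{R_\p,D_\p,X\}$. By Definition \ref{def_quasidom}, this says precisely that each $R_\p$ is quasi-dominant, that is, that $R$ is locally quasi-dominant.

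Second, we match the index sets.
\begin{itemize}
\item Since $R$ lies in any resolving subcategory, and in the thick closure of any subcategory of $\c(R)$ that contains $R$, the condition $\E\subseteq\thick_{\Db(R)}\X$ becomes ``$D\in\thick_{\Db(R)}\X$'' in the resolving row and ``$R,D\in\thick_{\Db(R)}\X$'' in the others.
\item We have $\pi(\E)=\{\pi(R),\pi(D)\}$ and $\pi(R)=0$, so thick subcategories of $\Dsg(R)$ containing $\pi(\E)$ are exactly those containing $\pi(D)$.
\item We claim $\ipd(\E)=\ipd(D)=\ng R$. Here $D_\p$ is a dualizing complex for $R_\p$, and a local ring is Gorenstein if and only if its dualizing complex has finite projective dimension. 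Indeed, if $\pd D_\p<\infty$, then $D_\p$ is perfect with finite injective dimension, which forces $R_\p$ to be Gorenstein (for example by Foxby's result, or because $R_\p\cong\RHom(D_\p,D_\p)$ then has finite injective dimension). Conversely, if $R_\p$ is Gorenstein, then $D_\p$ is a shift of $R_\p$.
\end{itemize}
Substituting these into Theorem \ref{3}(2) yields exactly the displayed diagram. The equivalence (1)$\Leftrightarrow$(2) of Theorem \ref{3} then gives the corollary in both directions.

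The main point needing care is the reverse implication: the existence of the bijections must give back condition (2) of Theorem \ref{3}, including commutativity. This follows because the displayed diagram is literally Theorem \ref{3}(2) under these identifications, with the same maps. The only genuine input beyond bookkeeping is the identification $\ipd(D)=\ng R$ via the Gorenstein criterion for dualizing complexes, together with the compatibility of dualizing complexes with localization.
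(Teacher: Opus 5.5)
Your proposal is correct and follows the paper's route: the paper obtains the corollary by taking $\E=\{R,D\}$ in Theorem~\ref{3}. Under this choice, condition (1) becomes local quasi-dominance because $D_\p$ is a dualizing complex of $R_\p$, and the index sets match via $\pi(R)=0$ and $\ipd(\{R,D\})=\ipd(D)=\ng R$. Your justification of the last identity, and your point that the displayed maps are the same as those in Theorem~\ref{3}(2), supply the bookkeeping the paper leaves implicit.
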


Recall that a local ring $R$ is said to have an {\em isolated singularity} if the localization $R_\p$ is a regular local ring for all nonmaximal prime ideals $\p$ of $R$.
We close the section by stating a remark on the above corollary.

\begin{rem}
Beyond providing a classification theorem, Corollary \ref{lqdom} offers insight into the nature of quasi-dominance.
Consider the case where $\ng R=\sing R$, e.g., where $R$ is a non-Gorenstein local ring with an isolated singularity.
Then quasi-dominance is characterized by the triviality of the subcategories generated by $R$ and $D$.
This observation shows that quasi-dominance is closely related to {\em strong G-regularity}, which we shall introduce in the next section.
Nontrivial examples of locally quasi-dominant rings for which the six sets in the diagram are not singleton sets are known in the Gorenstein case: singular locally dominant rings provide such examples.
On the other hand, such a non-Gorenstein example will be given in Example \ref{not deform sgr}(1).
\end{rem}

%%%%%%%%%%%%%%%%%%%%%%%%%%%%%%%%%%%%%%%
\section{Strongly G-regular rings}

This section introduces the notion of strongly G-regular rings, extending that of G-regular rings in the sense of \cite{T08}.
We shall figure out properties of strongly G-regular rings and consider the questions of Chen, Ringel and Zhang for commutative rings.
We start by recalling the definition of Gorenstein projective modules.

\begin{dfn}
\begin{enumerate}[(1)]
\item
Denote by $\Proj R$ the subcategory of $\Mod R$ consisting of projective $R$-modules, and by $\proj R=\Proj R\cap\mod R$ the subcategory of $\mod R$ consisting of finitely generated projective $R$-modules.
\item
A {\em totally acyclic} $R$-complex is by definition an exact complex over $\Proj R$ whose $P$-dual is again exact for every $P\in\Proj R$.
We denote by $\K(\Proj R)$ the homotopy category of complexes over $\Proj R$, and by $\Ktac (\Proj R)$ the subcategory of $\K(\Proj R)$ consisting of totally acyclic complexes.
\item
An $R$-module $M$ is called \textit{Gorenstein projective} if there exists a totally acyclic complex $P$ such that $M$ is isomorphic to the image of a differential map in $P$.
Denote by $\GProj R$ the subcategory of $\Mod R$ consisting of Gorenstein projective modules.
Let $\Gproj R=\GProj R\cap\mod R$ be the subcategory of $\mod R$ consisting of finitely generated Gorenstein projective modules, which are also called {\em totally reflexive}.
\end{enumerate}
\end{dfn}

We recall the definitions of G-regularity and the condition $\TR$, and then we shall define strong G-regularity.

\begin{dfn}
There are inclusions of subcategories
$$
\Mod R\supseteq{}^\perp\!R\supseteq\GProj R\supseteq\Proj R,\qquad
\mod R\supseteq{}^\perp\!R\cap\mod R\supseteq\Gproj R\supseteq\proj R
$$
where ${}^\perp\!R$ stands for the subcategory of $\Mod R$ consisting of $R$-modules $M$ with $\Ext_R^{>0}(M,R)=0$.
Following \cite{KLOT}, we say that $R$ {\em satisfies $\TR$} if the equality ${}^\perp\!R\cap\mod R=\Gproj R$ holds.
Following \cite{T08}, we say that $R$ is \textit{G-regular} if the equality $\Gproj R=\proj R$ is satisfied.
We thus call $R$ \textit{strongly G-regular} if $\GProj R=\Proj R$.
\end{dfn}

\begin{rem}\label{5}
\begin{enumerate}[(1)]
\item
When $R$ is strongly G-regular, $R$ is G-regular.
When $R$ is Gorenstein, $R$ is G-regular if and only if $R$ is strongly G-regular, if and only if $R$ is regular.
We refer the reader to \cite[Proposition 1.8(1)]{T08}.
\item
Atkins and Vraciu \cite{AV} call $R$ {\em strongly G-regular} if $R$ is an artinian local ring with residue field $k$ and every finitely generated $R$-module $M$ with $\Tor_{>0}^R(M,\EE_R(k))=0$ is free.
Hence an artinian local ring $R$ is strongly G-regular in their sense if and only if $R$ is G-regular and satisfies $\TR$.
\end{enumerate}
\end{rem}

G-regularity and strong G-regularity can be defined for a noncommutative ring.
Indeed, a noncommutative G-regular ring is called \textit{CM-free} and widely studied; see \cite{B98,Che12,Che17,CY,HS,KP}.
Chen asks in \cite[Problem C]{Che17} whether every G-regular artin algebra is strongly G-regular.
On the other hand, in the case of artin algebras, the condition $\TR$ is the same as the {\em weakly Gorenstein} property introduced by Ringel and Zhang \cite{RZ}.
They ask in \cite[Question 9.2]{RZ} if every G-regular artin algebra satisfies $\TR$.
We formulate these two questions due to Chen and to Ringel and Zhang for commutative rings:

\begin{ques}\label{main ques}
Suppose that $R$ is G-regular.\quad
(1) Is $R$ strongly G-regular?\quad
(2) Does $R$ satisfy $\TR$?
\end{ques}

\begin{rem}
\begin{enumerate}[(1)]
\item
Chen's question \cite[Problem C]{Che17} is closely related to his other two questions \cite[Problems A and B]{Che17}. 
Motivated by this, Dey, Kimura, Liu, and Otake \cite{DKLO25} formulate three corresponding questions for a Cohen--Macaulay local ring $R$ which has finite CM-representation type, and answer them affirmatively, assuming that $R$ is complete for the latter two questions.
In particular, Question \ref{main ques} has an affirmative answer if $R$ is a Cohen--Macaulay local ring having finite CM-representation type.
\item
In view of Remark \ref{5}(1), to consider Question \ref{main ques} we should assume that $R$ is non-Gorenstein.
Beligiannis \cite{B98} establishes a relationship between virtual Gorensteinness and strong G-regularity.
These properties are equivalent for non-Gorenstein commutative artinian rings; see \cite[Corollary 5.12]{DKLO25}.
\end{enumerate}
\end{rem}

Next we recall the definitions of annihilators for $R$-linear categories, and Rouquier dimension introduced in \cite{R08}.
Also, we introduce a certain set of prime ideals of $R$ and a certain condition on $R$.

\begin{dfn}
\begin{enumerate}[(1)]
\item
Let $\C$ be an $R$-linear category.
For each object $X\in\C$, denote by $\ann_R X$ the {\em annihilator} of $X$, that is to say, the set of elements $a\in R$ such that $a\Hom_\C(X,X)=0$.
Put $\ann_R \C=\bigcap_{X\in\C} \ann_R X$.
\item
The {\em (Rouquier) dimension} $\dim \T$ of a triangulated category $\T$ is defined as the infimum of integers $n$ such that there exists an object $X\in \T$ such that every object of $\T$ can be obtained from $X$ by taking finite direct sums, direct summands, shifts, and at most $n$ cones; we refer the reader to \cite{R08} for the details.
\item
The \textit{non-strongly G-regular locus} $\nsgr R$ is the set of prime ideals $\p$ such that $R_\p$ is not strongly G-regular.
\item
In what follows, the following condition will become a standard assumption.
\begin{enumerate}
\item[$(\#)$]
The ring $R$ admits a dualizing complex $D$, and $\Db(R)/\thick\{R, D\}$ has finite dimension.
\end{enumerate}
\end{enumerate}	
\end{dfn}

\begin{rem}\label{9}
\begin{enumerate}[(1)]
\item
The condition $(\#)$ is stable under localization by a multiplicatively closed subset of $R$.
\item
By Aoki's work \cite{A21} the latter condition of $(\#)$ holds if $R$ is a quasi-excellent ring of finite Krull dimension.
When $R$ has a dualizing complex, it is universally catenary and has finite Krull dimension; see \cite[Chapter V, \S10]{H}.
The ring $R$ satisfies $(\#)$ as long as $R$ is an excellent ring that admits a dualizing complex.
\end{enumerate}
\end{rem}

The following theorem is the main result of this section.
It especially says that under the assumption of $(\#)$, the non-strongly G-regular locus of $R$ is Zariski-closed, so that strong G-regularity is a local property. 
The equivalence between (1) and (3) also appears in \cite[Theorem 1.3]{DLL26} for artin algebras.

\begin{thm}\label{str G-req quasi-dom}
Suppose a commutative noetherian ring $R$ satisfies the condition $(\#)$.
Then the equality
\begin{equation}\label{6}
\nsgr R=\V(\ann_R (\Db(R)/\thick\{R, D\}))
\end{equation}
of sets of prime ideals of $R$ holds true, and the following seven conditions are equivalent.
\begin{enumerate}[\rm(1)]
\item
The commutative noetherian ring $R$ is strongly G-regular.
\item
The homotopy category $\Ktac (\Proj R)$ of totally acyclic complexes is zero.
\item
The thick closure $\thick_{\Db(R)}\{R, D\}$ coincides with the whole category $\Db(R)$.
\item
For every prime ideal $\p$ of $R$, the localization $R_\p$ is a strongly G-regular local ring.
\item
For every prime ideal $\p$ of $R$, the residue field $\kappa(\p)$ of the local ring $R_\p$ belongs to $\thick_{\Db(R_\p)} \{R_\p, D_\p\}$.
\item
One has $\sing R=\ng R$, and for any $\p\in\ng R$, the residue field $\kappa(\p)$ belongs to $\thick_{\Db(R_\p)} \{R_\p, D_\p\}$.
\item
One has $\sing R=\ng R$, and the localization $R_\p$ is a quasi-dominant local ring for each $\p\in\ng R$.
\end{enumerate}
\end{thm}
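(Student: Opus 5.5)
The plan is to prove the equivalence (1)$\Leftrightarrow$(2)$\Leftrightarrow$(3) globally first, then localize to get (\ref{6}), and finally read off (4)--(7) from (\ref{6}) and Remark \ref{4}(2).

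\emph{Step 1: (1)$\Leftrightarrow$(2)$\Leftrightarrow$(3).} The equivalence (1)$\Leftrightarrow$(2) is formal. A totally acyclic complex is contractible if and only if all its syzygies are projective, and every Gorenstein projective module occurs as such a syzygy.

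For (2)$\Leftrightarrow$(3) I will use Iyengar--Krause \cite{IK06}. Since $R$ has a dualizing complex, $\Ktac(\Proj R)$ is compactly generated. Its compact objects are, up to direct summands, the Verdier quotient $\Db(R)/\thick_{\Db(R)}\{R,D\}$; equivalently they form the kernel of the recollement $\K(\Proj R)\to\K(\Inj R)$ given by $-\otimes D$. Hence $\Ktac(\Proj R)=0$ exactly when this quotient vanishes, which is (3). This is the step I expect to be the main obstacle, since it needs care about idempotent completion. A thick subcategory of $\Db(R)$ whose quotient is zero up to summands is already everything, because thick subcategories are closed under summands.

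\emph{Step 2: the equality (\ref{6}).} Put $\T=\Db(R)/\thick\{R,D\}$. By Remark \ref{9}(1), localization at $\p$ commutes with forming $\T$, giving $\T_\p\simeq\Db(R_\p)/\thick\{R_\p,D_\p\}$ up to summands. Applying Step 1 to $R_\p$, we get $\p\notin\nsgr R$ if and only if $\T_\p=0$ (up to summands).

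It then suffices to show that $\T_\p=0$ iff $\p\notin\V(\ann_R\T)$. One direction is easy: if $\ann_R\T\not\subseteq\p$, pick $a\notin\p$ killing every endomorphism ring. Then each object becomes zero after inverting $a$, so $\T_\p=0$.

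The converse uses finite dimension from $(\#)$. Take a generator $G$ with $\T=\langle G\rangle_{n}$. If $G_\p=0$, some $s\notin\p$ kills $\operatorname{End}(G)$, which is finite over $R$ as a Hom-module of $\Db(R)$ modulo morphisms factoring through the thick subcategory. A standard ghost/level argument then shows that $s^{N}$ with $N$ depending on $n$ annihilates every object built in $n$ steps. So $\ann_R\T\not\subseteq\p$.

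\emph{Step 3: the remaining equivalences.} Granting (\ref{6}), we have (1)$\Leftrightarrow$(3)$\Leftrightarrow\T=0\Leftrightarrow\V(\ann\T)=\emptyset\Leftrightarrow\nsgr R=\emptyset\Leftrightarrow$(4). This uses that $\ann\T=R$ iff $\T=0$, since $1$ kills an object iff it is zero.

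For (4)$\Leftrightarrow$(5), apply Step 1 to $R_\p$. If $\kappa(\p)\in\thick\{R_\p,D_\p\}$ for all $\p$, then every module lies in the thick closure by induction on dimension via localization, using (\ref{6}) with closed nonstrongly-G-regular locus. Alternatively, use that the $\kappa(\p)$ generate $\Db$ up to localization.

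For (5)$\Leftrightarrow$(6), if $R_\p$ is Gorenstein then $D_\p\simeq R_\p$ up to shift. So $\kappa(\p)\in\thick R_\p$ iff $R_\p$ is regular, which forces $\sing R=\ng R$; the converse is clear. Finally, (6)$\Leftrightarrow$(7) is Remark \ref{4}(2) applied to the non-Gorenstein local rings $R_\p$.
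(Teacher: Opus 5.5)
\textbf{Verdict: the route is viable, but the step (5)$\Rightarrow$(4) is only asserted.}

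Your architecture differs from the paper's in two places.
\begin{itemize}
\item You reprove \eqref{6} by hand, by localizing the Verdier quotient and running a ghost-lemma argument against a strong generator. The paper simply cites \cite[Corollary 3.3]{DM24}.
\item You return from (5) to (4) by induction on height using \eqref{6}. The paper closes the cycle by (5)$\Rightarrow$(6)$\Leftrightarrow$(7)$\Rightarrow$(3), where the last step is Corollary \ref{lqdom}, i.e., the classification machinery of Section 2.
\end{itemize}

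The gap is in the second of these, which is the substantive content of the theorem. Induction on $\height\p$ together with \eqref{6} for $A=R_\p$ gives only $\nsgr A\subseteq\{\p A\}$. That is, $I=\ann_A(\Db(A)/\thick\{A,D_\p\})$ is $\p A$-primary or all of $A$. Your sketch never says how the hypothesis $\kappa(\p)\in\thick\{A,D_\p\}$ then disposes of the closed point. The ``alternatively'' sentence is not an argument either: membership of residue fields in localized thick closures does not, by itself, give membership in $\thick\{R,D\}$ without some local-to-global mechanism.

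\textbf{How to close the gap.} Write $I=(x_1,\dots,x_r)$. For $X\in\Db(A)$, set $X_0=X$ and $X_i=\operatorname{cone}(x_i\colon X_{i-1}\to X_{i-1})$, so that $X_r\cong X\otimes^{\mathbf L}_A\mathrm{K}(x_1,\dots,x_r)$.
\begin{itemize}
\item The homology of $X_r$ is killed by $I$, hence has finite length. So $X_r\in\thick\{\kappa(\p)\}\subseteq\thick\{A,D_\p\}$.
\item In the quotient, each $x_i\cdot\id_{X_{i-1}}$ vanishes because $x_i\in I$. Hence $X_i\cong X_{i-1}\oplus X_{i-1}[1]$ there.
\item Therefore $X$ is a summand of $X_r\cong0$ in the quotient, so $X\in\thick\{A,D_\p\}$.
\end{itemize}
There is also an alternative that avoids \eqref{6}. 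Choose $s\notin\q$ such that $s\cdot\id_{R/\q}$ factors through $\thick\{R,D\}$. Then descending induction on $\dim R/\q$, using the triangle $R/\q\xrightarrow{s}R/\q\to R/(\q+sR)\rightsquigarrow$, gives (5)$\Rightarrow$(3) without the Rouquier-dimension part of $(\#)$. The paper's route also avoids that hypothesis.

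\textbf{Two smaller points in Step 2.}
\begin{itemize}
\item Remark \ref{9}(1) does not say that localization commutes with the Verdier quotient. That needs the usual lifting argument: if $X_\p\in\thick\{R_\p,D_\p\}$, then $s\cdot\id_X$ factors through $\thick\{R,D\}$ for some $s\notin\p$. This is what \cite{DM24} supplies.
\item Morphisms in a Verdier quotient are not morphisms of $\Db(R)$ modulo those factoring through the subcategory, and you do not need $\operatorname{End}(G)$ to be finitely generated. The statement $G\cong0$ in the localization already means $s\cdot\id_G=0$ for some $s\notin\p$. A suitable power of $s$ then kills everything built from $G$ in boundedly many cones.
\end{itemize}
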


\begin{proof}
(1)$\Leftrightarrow$(2):
This is an immediate consequence of the well-known fact that  there exists a triangle equivalence between $\Ktac (\Proj R)$ and the stable category of $\GProj R$; see \cite[Example 3.10]{CET20} for instance.
%Here we include a proof for the convenience of the reader. First, in order to prove the ``only if'' part, we assume that $R$ is strongly G-regular. For a totally acyclic complex $F=(\cdots \to F_{i+1} \xrightarrow{d_{i+1}} F_{i} \xrightarrow{d_i} F_{i-1} \to \cdots)$, we divide it into short exact sequences $\sigma_i: 0\to G_{i+1}\xrightarrow{\iota_{i+1}} F_{i} \xrightarrow{\pi_i} G_{i}\to 0$ for each $i\in\mathbb{Z}$, that is, $G_i=\Im d_i$ and $d_i=\iota_i \circ\pi_i$. For any $i\in\mathbb{Z}$, $\sigma_i$ is split since $G_i\in\GProj R=\Proj R$. There is $\alpha_{i+1}: F_i\to G_{i+1}$ and $\beta_i: G_i\to F_i$ such that $$\alpha_{i+1} \circ\iota_{i+1}=\id_{G_{i+1}}, \pi_i\circ\beta_i=\id_{G_i}, \textrm{ and } \beta_i\circ\pi_i+\iota_{i+1}\circ\alpha_{i+1} =\id_{F_i}.$$ We put $h_i=\beta_i \circ\alpha_i$ for each $i\in\mathbb{Z}$. We obtain $$h_i\circ d_i+d_{i+1}\circ h_{i+1}=(\beta_i\circ \alpha_i)\circ (\iota_i\circ \pi_i)+(\iota_{i+1}\circ \pi_{i+1})\circ (\beta_{i+1}\circ \alpha_{i+1})=\beta_i\circ \pi_i+\iota_{i+1} \circ\alpha_{i+1}=\id_{F_i},$$ which means $F=0$ in $\Ktac (\Proj R)$. Conversely, in general,  if a complex $(\cdots \to  F_{i} \xrightarrow{d_i} F_{i-1} \to \cdots)$ is zero in the homotopy category, then $\Im d_i$ is isomorphic to a direct summand of $F_i$ for any $i\in\mathbb{Z}$. By this, we easily obtain the ``if'' part.

(2)$\Leftrightarrow$(3):
Applying \cite[(1) and (2) of Theorem 5.3]{IK06}, we immediately get this equivalence.

\eqref{6}:
By the equivalence (1)$\Leftrightarrow$(3) which has already been shown, the set $\nsgr R$ consists of the prime ideals $\p$ with $\Db(R_\p) \ne \thick\{R_\p, D_\p\}$.
This equals $\V(\ann_R (\Db(R)/\thick_{\Db(R)}\{R, D\}))$ by \cite[Corollary 3.3]{DM24}.

(3)$\Leftrightarrow$(4):
This equivalence is a direct consequence of the equality \eqref{6} which has already been proved.

(4)$\Rightarrow$(5):
Applying the already shown implication (1)$\Rightarrow$(3) to the localization $R_\p$ yields this implication.

(5)$\Rightarrow$(6):
If $R_\p$ is Gorenstein, then $D_\p$ is isomorphic to a shift of the stalk complex $R_\p$, and the containment $\kappa(\p)\in\thick_{\Db(R_\p)}\{R_\p,D_\p\}$ implies that $R_\p$ is regular.
The equality $\sing R=\ng R$ thus follows.

(6)$\Leftrightarrow$(7):
It is straightforward to deduce this equivalence by applying Remark \ref{4}(2). 

(7)$\Rightarrow$(3):
The ring $R$ is locally quasi-dominant, and Corollary \ref{lqdom} yields $\Db(R)=\thick_{\Db(R)}\{R, D\}$.
\end{proof}

\begin{rem}\label{12}
It is observed from the proof that the equivalences (1)$\Leftrightarrow$(2)$\Leftrightarrow$(3) in Theorem \ref{str G-req quasi-dom} hold for a commutative noetherian ring $R$ having a dualizing complex (we do not need to assume that $R$ satisfies $(\#)$).
\end{rem}

As an application of the above theorem, we have a partial answer to Question \ref{main ques}(2).

\begin{cor}\label{stgregtr}
Suppose that $R$ admits a dualizing complex $D$.
Then the following statements hold.
\begin{enumerate}[\rm(1)]
\item
If $R$ is a strongly G-regular ring, then $R$ satisfies the condition $\TR$.
\item
If Question \ref{main ques}(1) has an affirmative answer for the ring $R$, then so does Question \ref{main ques}(2).
\end{enumerate}
\end{cor}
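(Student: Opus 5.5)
For (1), we use the equivalence (1)$\Leftrightarrow$(3) of Theorem \ref{str G-req quasi-dom}. By Remark \ref{12}, this equivalence only needs a dualizing complex. So strong G-regularity of $R$ gives $\thick_{\Db(R)}\{R,D\}=\Db(R)$. We must show that every $M\in{}^\perp\!R\cap\mod R$ is totally reflexive.

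Consider the subcategory $\X$ of $\Db(R)$ consisting of objects $X$ such that $\RHom_R(M,X)$ has bounded cohomology in the "$\Ext^{\gg0}$" sense. More precisely, $\Ext^i_R(M,X)=0$ for all $i\gg0$. This $\X$ is thick: it is closed under shifts and summands, and under cones by the long exact sequence. It contains $R$ because $\Ext^{>0}_R(M,R)=0$. It contains $D$ because $D$ has finite injective dimension. Hence $\X=\Db(R)$.

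Next fix a prime $\p$. Localization preserves all of the above, so we may assume $R$ is local with residue field $k$. Taking $X=k$ gives $\Ext^i_R(M,k)=0$ for $i\gg0$, so $M$ has finite projective dimension. A module of finite projective dimension with $\Ext^{>0}_R(M,R)=0$ is projective: if $\pd M=n>0$, then $\Ext^n_R(M,R)\neq0$ by the standard argument with a minimal free resolution and Nakayama's lemma. So $M$ is projective, hence totally reflexive. Thus in fact ${}^\perp\!R\cap\mod R=\proj R=\Gproj R$, and $\TR$ holds.

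The one point to check is that this local argument globalizes. We have $M_\p$ free for every $\p$, so $M$ is projective, and nothing is lost. Actually, no localization is needed: $\Ext^i_R(M,R/\m)=0$ for $i\gg0$ for all maximal ideals $\m$ already gives finite projective dimension locally. The argument is then completed locally by the Nakayama step above, using that $\Ext$ commutes with localization for finitely generated $M$.

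For (2), suppose $R$ is G-regular and Question \ref{main ques}(1) holds for $R$. Then $R$ is strongly G-regular, so by (1) it satisfies $\TR$. Thus Question \ref{main ques}(2) has an affirmative answer for $R$.

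The main step is the thickness argument combined with the Iyengar--Krause equivalence. The rest is routine. Its one delicate point is the vanishing $\Ext^{\gg0}_R(M,D)=0$, which follows since $D$ is a bounded complex of injectives.
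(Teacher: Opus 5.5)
Your proposal is correct and follows essentially the same route as the paper: you use (1)$\Leftrightarrow$(3) of Theorem~\ref{str G-req quasi-dom} (valid with only a dualizing complex by Remark~\ref{12}) to get $\thick_{\Db(R)}\{R,D\}=\Db(R)$, then note that boundedness of $\RHom_R(M,-)$ on $R$ and $D$ propagates to all of $\Db(R)$, apply this to residue fields to get finite projective dimension locally, and conclude projectivity from $\Ext^{>0}_R(M,R)=0$. The only difference is that you finish locally at each maximal ideal, whereas the paper first passes to finite global projective dimension; this is a harmless variation.
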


\begin{proof}
The second assertion follows from the first.
To prove the first assertion, let $M$ be a finitely generated $R$-module such that $\Ext_R^{>0}(M,R)=0$.
Then $\Db(R)=\thick_{\Db(R)} \{R, D\}$ by the equivalence (1)$\Leftrightarrow$(3) in Theorem \ref{str G-req quasi-dom} (see Remark \ref{12}).
As $\RHom_R(M,R)$, $\RHom_R(M,D)$ are homologically bounded, so is $\RHom_R(M,X)$ for all objects $X\in\Db(R)$.
In particular, for every prime ideal $\p$ of $R$ the complex $\RHom_{R_\p}(M_\p, \kappa(\p))=\RHom_R(M,R/\p)_\p$ is homologically bounded, which means that $M_\p$ has finite projective dimension over $R_\p$.
Hence $M$ has finite projective dimension over $R$, and is projective over $R$ since $\Ext_R^{>0}(M,R)=0$.
\end{proof}

Theorem \ref{str G-req quasi-dom} makes it possible to compare strong G-regularity with other properties of commutative rings, as follows.
The third assertion is the same as \cite[Proposition 5.15]{DKLO25}.

\begin{cor}\label{sgrprprng}
Assume that $R$ satisfies the condition $(\#)$.
Then the following statements hold true.
\begin{enumerate}[\rm(1)]
\item
The ring $R$ is strongly G-regular if and only if $R$ is locally quasi-dominant with $\sing R=\ng R$.
\item
If $R$ is a non-Gorenstein quasi-dominant local ring with an isolated singularity, $R$ is strongly G-regular.
\item
If $R$ is Cohen--Macaulay non-Gorenstein local with finite CM-representation type, $R$ is strongly G-regular.
\end{enumerate}
\end{cor}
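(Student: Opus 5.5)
All three assertions follow from Theorem \ref{str G-req quasi-dom}, which applies because $R$ satisfies $(\#)$.

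For (1), we use the equivalence (1)$\Leftrightarrow$(7) of that theorem. Suppose first that $R$ is strongly G-regular. Condition (7) gives $\sing R=\ng R$ and quasi-dominance of $R_\p$ for every $\p\in\ng R$. For $\p\notin\ng R$, the ring $R_\p$ is Gorenstein, hence regular since $\sing R=\ng R$. A regular local ring is dominant (every complex has finite projective dimension, so the condition is vacuous), hence quasi-dominant by Example \ref{1}(1). So $R$ is locally quasi-dominant. Conversely, if $R$ is locally quasi-dominant with $\sing R=\ng R$, then (7) holds verbatim, and $R$ is strongly G-regular.

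For (2), let $R$ be a non-Gorenstein quasi-dominant local ring with an isolated singularity, and verify (6). For a non-maximal prime $\p$, the ring $R_\p$ is regular, hence Gorenstein, so $\p$ lies in neither $\sing R$ nor $\ng R$. The maximal ideal $\m$ lies in $\ng R$ because $R$ is non-Gorenstein, and $\ng R\subseteq\sing R$ always holds. Hence $\sing R=\ng R=\{\m\}$. For $\p=\m$, Remark \ref{4}(2) together with non-Gorensteinness gives $k\in\thick_{\Db(R)}\{R,D\}$, which is the remaining requirement of (6).

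For (3), Corollary \ref{8} shows that $R$ is quasi-dominant, since a Cohen--Macaulay local ring with a dualizing complex has a canonical module. Finite CM-representation type forces an isolated singularity; this is the classical result of Auslander, Huneke--Leuschke and Leuschke--Wiegand, valid without completeness. Part (2) then gives strong G-regularity. Alternatively, we can avoid the isolated-singularity input: finite CM-representation type localizes (again by the Leuschke--Wiegand results, using a canonical module), so each $R_\p$ is either regular or non-Gorenstein of finite type. One then checks $\sing R=\ng R$ using that a Gorenstein ring of finite CM type with $\p\neq\m$ is regular, and applies (1).

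The only potential obstacle is this citation for (3), namely that finite CM-representation type implies an isolated singularity without assuming excellence or completeness. Should that be delicate, we simply quote \cite[Proposition 5.15]{DKLO25}, as the text already notes that (3) coincides with it.
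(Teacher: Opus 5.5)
Your proof is correct and follows the paper's route. Part (1) is the equivalence (1)$\Leftrightarrow$(7) of Theorem \ref{str G-req quasi-dom}, where you rightly supply the trivial quasi-dominance of the regular localizations; part (2) amounts to checking $\sing R=\ng R=\{\m\}$; and part (3) is (2) combined with Corollary \ref{8}, where you make explicit the Huneke--Leuschke fact (finite CM-representation type forces an isolated singularity) that the paper uses tacitly. Your sketched ``alternative'' for (3) does not actually bypass that fact, since it needs it to conclude that Gorenstein localizations at non-maximal primes are regular, but your main argument does not depend on that alternative.
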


\begin{proof}
The first assertion follows from the equivalence (1)$\Leftrightarrow$(7) in Theorem \ref{str G-req quasi-dom}, while the second assertion is a direct consequence of the first.
The third assertion follows by the second and Corollary \ref{8}.
\end{proof}

\begin{eg}\label{str G-reg rem2}
In general, a Veronese subring of a polynomial ring over a field has an isolated singularity.
Hence, such a quasi-dominant local ring $R$ as in Example \ref{vrns} is strongly G-regular by Corollary \ref{sgrprprng}(2).
\end{eg}

Taking into account the fact that G-regularity is not necessarily preserved under localization, we see from the theorem stated above that strong G-regularity and G-regularity are distinct notions, and get the following.

\begin{cor}\label{11}
Question \ref{main ques}(1) has a negative answer in general.
\end{cor}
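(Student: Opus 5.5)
The plan is to exhibit a ring $R$ that is G-regular but has a localization $R_\p$ which is not G-regular. Strong G-regularity localizes: Theorem \ref{str G-req quasi-dom} (4) applies to rings satisfying $(\#)$, and by Remark \ref{9}(2) excellent rings with a dualizing complex satisfy $(\#)$. Hence such an $R$ cannot be strongly G-regular. Otherwise every $R_\p$ would be strongly G-regular, hence G-regular by Remark \ref{5}(1), contradicting the choice of $\p$. So the whole argument reduces to finding an excellent ring with a dualizing complex, say essentially of finite type over a field, that is G-regular while some localization is not.

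For the example I would take a ring of the form $R=S\times T$ or, better, a local ring. Note that a product is not suitable. Projectivity and Gorenstein projectivity are checked componentwise on a product, so a product with a non-G-regular factor is itself not G-regular.

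Instead I would use the standard phenomenon, recorded in \cite{T08}, that G-regularity is not preserved under localization. A typical source is a non-Gorenstein local ring $R$ of positive depth whose punctured spectrum contains a Gorenstein non-regular point $\p$; for instance, a suitable non-Gorenstein ring whose singular locus is strictly bigger than its non-Gorenstein locus. By Remark \ref{5}(1) such an $R_\p$ is not G-regular. Meanwhile $R$ itself can be G-regular, for example by a criterion from \cite{T08}, such as rings with $\m$ decomposable or Golod-type conditions making every totally reflexive module free. Concretely, I would try $R=k[x,y,z,w]_{(x,y,z,w)}/(xy, \ldots)$ built as a fibre product or gluing of a hypersurface singularity along a non-Gorenstein component. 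I would then verify G-regularity of $R$ by the fibre-product result: nontrivial fibre products are G-regular.

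The main obstacle is certifying G-regularity of the chosen $R$ while keeping a Gorenstein singular prime in its spectrum. For this I would rely on the cited literature (\cite{T08} and the fibre product results) rather than a direct computation. Once the example is fixed, the conclusion is immediate from Theorem \ref{str G-req quasi-dom}: $\p\in\sing R=\nsgr R$-type reasoning, or simply (1)$\Rightarrow$(4), shows $R$ is not strongly G-regular. This gives a negative answer to Question \ref{main ques}(1).
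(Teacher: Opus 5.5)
Your reduction is the same as the paper's. For a ring satisfying $(\#)$, which Remark \ref{9}(2) guarantees for complete local rings or rings essentially of finite type over a field, the implication (1)$\Rightarrow$(4) of Theorem \ref{str G-req quasi-dom} together with Remark \ref{5}(1) shows that a G-regular ring with a non-G-regular localization is not strongly G-regular. The gap is that you never produce the ring. Once that theorem is available, exhibiting the example is the entire content of the corollary. Writing ``$k[x,y,z,w]_{(x,y,z,w)}/(xy,\ldots)$'' and saying you would rely on the literature to certify G-regularity leaves the statement unproved.

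The direction you point toward does work, and it is what the paper uses. Take
\[
R=k\llbracket x,y,z\rrbracket/(x^2,xz,yz)\cong k\llbracket x,y\rrbracket/(x^2)\times_k k\llbracket z\rrbracket .
\]
\begin{enumerate}
\item This ring is G-regular by \cite[Example 6.2]{T08}.
\item At $\p=(x,z)$ the element $y$ is a unit, so $yz=0$ forces $z=0$ in $R_\p$. Hence $R_\p\cong k\llbracket x,y\rrbracket_{(x)}/(x^2)$, which is Gorenstein but not regular, and so not G-regular by Remark \ref{5}(1).
\item Since $R$ is complete, it satisfies $(\#)$, and your argument finishes the proof.
\end{enumerate}

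Your certification route as stated is also flawed. The claim that nontrivial fibre products are G-regular is false in that generality: $k\llbracket x\rrbracket\times_k k\llbracket y\rrbracket\cong k\llbracket x,y\rrbracket/(xy)$ is a nontrivial fibre product that is Gorenstein and singular, hence not G-regular. The fibre-product results you have in mind require $R$ to be non-Gorenstein. That hypothesis holds here, but you need to state and check it, or simply cite the example above directly.
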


\begin{proof}
Let $R=k\llbracket x,y,z\rrbracket/(x^2, xz, yz)$ be a quotient of a formal power series ring over a field $k$.
As is explained in \cite[Example 6.2]{T08}, the ring $R$ is G-regular, but $R_\p\cong k\llbracket x,y\rrbracket_{(x)}/(x^2)$ is not G-regular for $\p=(x,z)$.
Hence $R_\p$ is not strongly G-regular.
Remark \ref{9}(2) implies that $R$ satisfies $(\#)$.
It follows from Theorem \ref{str G-req quasi-dom} that the ring $R$ is not strongly G-regular.
(Also, as $\p$ is in $\sing R$ but not in $\ng R$, it holds that $\sing R\ne\ng R$.)
\end{proof}

\begin{rem}
\begin{enumerate}[(1)]
\item
The proof of Corollary \ref{11} also says that the {\em non-G-regular locus} of $R$ is not necessarily Zariski-closed even when $R$ is a complete equicharacteristic local ring.
\item
Since the ring $R$ in the proof of Corollary \ref{11} is one-dimensional, this proof does not provide a negative answer to the original Chen's question \cite[Problem C]{Che17} regarding artin algebras.
%In fact, the authors wonder if Question \ref{main ques}(1) should be affirmative when $R$ is artinian.
\end{enumerate}
\end{rem}

The difference between G-regularity and strong G-regularity can also be observed from another point of view.
With respect to modding out by a regular element, G-regularity behaves stably; see \cite[Propositions 4.2 and 4.6]{T08}. However, the following example shows that strong G-regularity does not enjoy such stability.

\begin{eg}\label{not deform sgr} Let $k$ be a field.
In the two items below, denote by $\m$ the maximal ideal of the local ring $R$.
\begin{enumerate}[(1)]
\item
Let $R$ be the completion of the third Veronese subring of the polynomial ring $k[s,t]$, that is to say, 
$$
R=k\llbracket s^3, s^2t, st^2, t^3\rrbracket\cong k\llbracket x,y,z,w\rrbracket/I_2\left(\begin{smallmatrix}
   x & y & z\\
   y & z & w
\end{smallmatrix}\right)
= k\llbracket x,y,z,w\rrbracket/(xz-y^2, xw-yz, yw-z^2).
$$
Then $R$ is a non-Gorenstein dominant local ring with an isolated singularity; see Examples \ref{vrns} and \ref{str G-reg rem2}.
By Corollary \ref{sgrprprng}, $R$ is strongly G-regular.
By Theorem \ref{str G-req quasi-dom} the localization $R_\p$ at $\p=(x,y,z)$ is strongly G-regular.
The element $x\in\m\setminus\m^2$ is regular on $R$.
By \cite[Proposition 4.6]{T08}, $S=R/xR$ is G-regular.
As $w\notin \p$ and $xw=yz$, we get $x\in \p^2 R_\p\cap R=\p^{(2)}$.
Since $S_\p=R_\p/xR_\p$ is not G-regular by \cite[Proposition 4.6]{T08} again, it is not strongly G-regular.
Using Theorem \ref{str G-req quasi-dom} again shows that $S$ is not strongly G-regular.
\item
Let $R=k\llbracket x,y,z\rrbracket/(x^2, xz, yz)$ be the ring in the proof of Corollary \ref{11}, which is not strongly G-regular.
The quotient $S=R/(y-z)R\cong k\llbracket x,y\rrbracket/(x^2, xy, y^2)$ by the regular element $y-z\in\m\setminus\m^2$ is an artinian local ring with minimal multiplicity.
By \cite[Proposition 5.10]{T23}, $S$ is dominant, whence strongly G-regular. 
\end{enumerate}
\end{eg}

\begin{rem}
Let $(R,\m)$ be a local ring and $x\in\m$ regular.
Let $M$ be an $R$-module with $M/xM$ projective over $R/xR$.
If $M$ is finitely generated, it is projective over $R$.
This elementary fact is no longer true even if $M$ is Gorenstein projective.
Indeed, in the situation of Example \ref{not deform sgr}(2), as $R$ is not strongly G-regular, there is a non-projective Gorenstein projective $R$-module $M$.
The element $f=y-z$ is regular, and $M/fM$ is Gorenstein projective $S$-module.
Since $S$ is strongly G-regular, $M/fM$ is a projective module over $S=R/fR$.
\end{rem}

The reason for the failure of deformation in Example \ref{not deform sgr} comes from non-preservation of equality between the non-Gorenstein and singular loci.
One can add an assumption that eliminates this obstruction as follows.

\begin{prop}\label{sgr reg}
Let $R$ be with a dualizing complex $D$, and let $x\in R$ be an $R$-regular element contained in every prime ideal belonging to the singular locus of $R$. If $R/xR$ is strongly G-regular, then so is $R$.
\end{prop}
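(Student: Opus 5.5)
Since $R$ has a dualizing complex $D$, Remark \ref{12} shows that $R$ is strongly G-regular if and only if $\thick_{\Db(R)}\{R,D\}=\Db(R)$. The same criterion applies to $\bar R=R/xR$, which has the dualizing complex $D_{\bar R}=\RHom_R(\bar R,D)$. The plan is to transport the equality $\thick_{\Db(\bar R)}\{\bar R,D_{\bar R}\}=\Db(\bar R)$ to $R$, and then use the hypothesis on $x$ to cover the regular locus.

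\emph{Step 1: everything killed by $x$ lies in $\thick\{R,D\}$.} As in the proof of Proposition \ref{qdomdeform}(1), the triangles $R\xrightarrow{x}R\to\bar R\rightsquigarrow$ and $D_{\bar R}\to D\xrightarrow{x}D\rightsquigarrow$ show that $\bar R,D_{\bar R}\in\thick_{\Db(R)}\{R,D\}$. By Remark \ref{12} applied to $\bar R$, every object of $\Db(\bar R)$ lies in $\thick_{\Db(\bar R)}\{\bar R,D_{\bar R}\}$. The restriction functor $\Db(\bar R)\to\Db(R)$ is exact, so the image of every object of $\Db(\bar R)$ lies in $\thick_{\Db(R)}\{R,D\}$. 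In particular every finitely generated $R$-module annihilated by $x$ lies there. A filtration by the submodules $x^iM$ then shows that the same holds for every module annihilated by a power of $x$.

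\emph{Step 2: reduce to $M\otimes^{\L}_R\bar R$.} Let $M\in\mod R$; it suffices to treat modules by Proposition \ref{2}. The triangle $M\xrightarrow{x}M\to M\otimes^{\L}_R\bar R\rightsquigarrow$ has third term with homology killed by $x$. So $M\otimes^{\L}_R\bar R$ lies in $\thick\{R,D\}$, being built from its homology modules. Hence it is enough to show that $M$ belongs to $\thick\{R,D,\,M\otimes^{\L}_R\bar R\}$.

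\emph{Step 3: the key step.} Over $R_x$ every localization is regular, because $x$ lies in every prime of $\sing R$. Thus $R_x$ is regular of finite dimension (a dualizing complex forces finite Krull dimension), and $M_x$ has finite projective dimension. Choosing $n=\dim R$, the syzygy $N=\syz^nM$ satisfies $N_\p$ free for all $\p\notin\V(x)$. Since $M\in\thick\{R,N\}$, we may replace $M$ by $N$ and assume $\nf(M)\subseteq\V(x)$.

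The main obstacle is to conclude from here. A module whose nonfree locus lies in $\V(x)$ should be built from $R$ and modules killed by powers of $x$, but it is not itself killed by such a power. To handle this, I would first try the standard fact that $\Ext^i_R(M,-)$ restricted to such modules is annihilated by a power of $x$ for $i>0$. This makes $x^t\cdot\mathrm{id}$ on $\syz M$, viewed in $\Dsg(R)$, equal to zero for $t\gg0$. Consequently $\syz M$ is a direct summand of $\mathrm{cone}(x^t)$ in $\Dsg(R)$, and that cone lies in $\thick\{R,D\}$ by Step 1. Lifting back from $\Dsg(R)$ via Theorem \ref{3} and Proposition \ref{2}, whose correspondences for thick subcategories containing $R$ pass through $\pi^{-1}$, then gives $M\in\thick_{\Db(R)}\{R,D\}$.

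Alternatively, the same annihilation argument can be run in $\Db(R)/\thick\{R,D\}$ directly: $x$ annihilates this quotient by Step 1, and a power of $x$ acts invertibly on it at primes outside $\V(x)$ by Step 3. Hence $x^t\cdot\mathrm{id}$ is zero on every object of the quotient, so the quotient is zero. Either way $\thick_{\Db(R)}\{R,D\}=\Db(R)$, and $R$ is strongly G-regular.
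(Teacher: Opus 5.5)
Your main argument is correct, but it takes a different route from the paper's. The paper uses your Step 1 and then invokes a generation theorem (Schoutens' Main Theorem, or the corollary cited as \cite{thgen}). By that theorem it suffices to check $R/\p\in\thick_{\Db(R)}\{R,D\}$ for each $\p\in\sing R$. Since $x\in\p$, each such $R/\p$ is an $R/xR$-module, so Step 1 finishes the proof.

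Your Steps 2--3 replace that external result with a direct argument:
\begin{enumerate}[(1)]
\item Pass to $N=\syz^{\dim R}M$, so that $\nf(N)\subseteq\V(x)$; finite Krull dimension is guaranteed by $D$.
\item A power $x^t$ is zero on $\syz N$ in $\Dsg(R)$, so $\syz N$ is a direct summand of $\syz N\otimes^{\L}_R R/x^tR$ there.
\item That object is built from modules killed by $x^t$, hence lies in $\thick_{\Db(R)}\{R,D\}$.
\item You then lift back from $\Dsg(R)$.
\end{enumerate}
This is self-contained and shows exactly where the hypothesis $\sing R\subseteq\V(x)$ enters. In effect you reprove the special case of the generation theorem that is needed. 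The paper's proof is shorter but relies on a black box.

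Two points should be stated precisely. First, the annihilation step needs a single $t$ with $x^t\Ext^1_R(N,\syz N)=0$. Such a $t$ exists because this one finitely generated module vanishes at every $\p\notin\V(x)$. Multiplying the class of $0\to\syz N\to P\to N\to 0$ by $x^t$ gives zero, and this says exactly that $x^t\cdot\mathrm{id}_{\syz N}$ factors through $P$. Knowing only that each $\Ext^1_R(N,L)$ is killed by some power of $x$, separately for each $L$, does not by itself give this. Second, the lifting step is the standard correspondence between thick subcategories of $\Db(R)$ containing the perfect complexes and thick subcategories of $\Dsg(R)$. Cite that directly: Theorem \ref{3} as stated is conditional on its hypothesis (1), even though its proof establishes the lower square unconditionally.

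Your ``Alternatively'' paragraph has the two roles reversed. For any $X\in\Db(R)$, the cone of $x\cdot\mathrm{id}_X$ is $X\otimes^{\L}_R R/xR$, which comes from $\Db(R/xR)$ and so lies in $\thick_{\Db(R)}\{R,D\}$ by Step 1. Thus Step 1 makes $x$ act \emph{invertibly} on every object of $\Db(R)/\thick_{\Db(R)}\{R,D\}$, not as zero. It is Step 3, combined with the stable annihilation above, that makes some power of $x$ act as zero on each object. With the attributions exchanged the argument is valid, since an endomorphism that is both invertible and zero forces the object to vanish. As written, however, its justification is wrong.
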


\begin{proof}
In view of Theorem \ref{str G-req quasi-dom}, it suffices to show $\Db(R)=\thick_{\Db(R)}\{R, D\}$ (see Remark \ref{12}).
By \cite[Main Theorem]{Sc03} (see also \cite[Corollary 1.3]{thgen}), it is enough to prove that $R/\p$ belongs to $\thick_{\Db(R)}\{R, D\}$ for every $\p\in\sing R$. 
The complex $D_{R/xR}:=\RHom_R(R/xR, D)$ is a dualizing complex of $R/xR$.
As $x$ is in $\p$ and $R/xR$ is strongly G-regular, we have  $R/\p=(R/xR)/(\p/xR)\in\thick_{\Db(R/xR)}\{ R/xR, D_{R/xR}\}$.
Since $R/xR$ and $D_{R/xR}$ are in $\thick_{\Db(R)}\{R, D\}$ as we saw in the proof of Proposition \ref{qdomdeform}, we get $R/\p\in \thick_{\Db(R)}\{R, D\}$.
\end{proof}

The first assertion of the theorem below deals with descent of G-regularity along a homomorphism of finite flat dimension, and the second assertion is an application.
For an $R$-module $M$, denote by $\pd_R M$, $\fd_R M$, and $\cx_R M$ the projective dimension, the flat dimension, and the complexity of $M$, respectively, and put $M^\ast=\Hom_R(M,R)$.
When $R$ is local, $\codim R$ stands for the codimension of $R$, i.e., $\codim R=\edim R-\dim R$.

\begin{thm}\label{13}
Let $\phi:R \to S$ be a homomorphism of commutative noetherian rings with $S$ being G-regular.
\begin{enumerate}[\rm(1)]
\item
If $\fd_R S < \infty$ and the image of the induced map $\spec S \to \spec R$ contains $\Max R$, then $R$ is G-regular.
\item
Suppose that $\phi$ is finite and local, and that $R$ is a complete intersection.
Then $\cx_R S = \codim R$.
\end{enumerate}
\end{thm}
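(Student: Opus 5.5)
For (1), I will take a finitely generated totally reflexive $R$-module $M$, show that $M\otimes_R S$ is totally reflexive over $S$, and then use G-regularity of $S$ to force $M$ to be projective.

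Set $n=\fd_R S<\infty$. Since $M$ is totally reflexive, it admits a complete resolution $F$, an exact complex of finitely generated free $R$-modules whose dual $F^\ast$ is exact, with $M$ a syzygy in $F$. Every syzygy of $M$ is again totally reflexive, and $\Tor_{>0}^R(N,S)=0$ for any totally reflexive $N$. To see this, write $N=\syz^{n}N'$ with $N'$ totally reflexive; then $\Tor_i^R(N,S)\cong\Tor_{i+n}^R(N',S)=0$ for $i>0$. Therefore $F\otimes_R S$ is exact. Its $S$-dual is $F^\ast\otimes_R S$, which is exact for the same reason, because $F^\ast$ is a complete resolution of $M^\ast$. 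So $F\otimes_R S$ is a complete resolution over $S$, and $M\otimes_R S$ is totally reflexive over $S$. By G-regularity of $S$, the module $M\otimes_R S$ is projective over $S$.

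To descend, fix $\m\in\Max R$ and choose $\n\in\spec S$ lying over $\m$. The local homomorphism $R_\m\to S_\n$ has finite flat dimension, and $M_\m\otimes_{R_\m} S_\n$ is free, with $\Tor_{>0}^{R_\m}(M_\m,S_\n)=0$. Take a minimal free resolution $G$ of $M_\m$. Then $G\otimes S_\n$ is a minimal free resolution of the free module $M_\m\otimes S_\n$, so it must have length $0$, which gives $G_1=0$. Hence $M_\m$ is free for every maximal $\m$, and $M$ is projective. The main point to verify carefully is the Tor-vanishing $\Tor_{>0}^R(N,S)=0$, which uses that totally reflexive modules are arbitrarily high syzygies.

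For (2), set $c=\codim R$. Since $R$ is a complete intersection, every finitely generated module has complexity at most $c$, so $\cx_R S\le c$. For the reverse inequality, I will use the support variety theory of Avramov–Buchweitz. First pass to the completion, which preserves G-regularity along $\phi$ finite (or argue directly). If $\cx_R S<c$, then the support variety $\V_R(S)$ is a proper subvariety of the cohomological variety of dimension $c$. Pick a nonzero point of the full variety outside $\V_R(S)$, and realize it after a faithfully flat extension of the residue field as a module $N$ of complexity $1$, for instance a maximal Cohen–Macaulay module with $\V_R(N)$ a line avoiding $\V_R(S)$. Since $\V_R(N)\cap\V_R(S)=\{0\}$, we get $\Tor^R_{\gg0}(N,S)=0$ and $\Ext_R^{\gg0}(N,S)=0$. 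Replacing $N$ by a high syzygy, which is totally reflexive, we may assume all positive $\Tor$ and $\Ext$ vanish. Then $\fd_R S$ need not be finite, but $N\otimes_R S$ is totally reflexive over $S$ by the Tor-vanishing argument of (1) applied to the complete resolution of $N$, using vanishing of $\Tor$ against its cosyzygies too. By G-regularity, $N\otimes_R S$ is free, and the descent argument then makes $N$ free, contradicting $\cx_R N=1$. Hence $\cx_R S=c$. The main obstacle is arranging the Tor-independence along the entire complete resolution of $N$, which is where the support variety intersection being trivial is essential, together with the residue field extension step.
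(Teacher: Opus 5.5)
Your proof of (1) is correct and is in substance the paper's argument. You get $\Tor^R_{>0}(N,S)=0$ for totally reflexive $N$ by dimension shifting, where the paper cites Christensen. You then base change a complete resolution to see that $M\otimes_R S$ is totally reflexive, where the paper computes $\RHom_S(M\otimes_R S,S)\cong M^*\otimes_R S$ by tensor evaluation. For the descent, your minimal-resolution argument at a prime $\n$ over $\m$ is a fine substitute for the paper's use of $S/P\cong(R/\m)^{\oplus\Lambda}$.

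For (2) your architecture matches the paper's: complete, find a nonfree maximal Cohen--Macaulay module of complexity one that is Tor-independent of $S$, show its base change is totally reflexive over $S$, and contradict via the descent from (1). However, two steps are not justified.

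The first is the residue field extension. Afterwards you use G-regularity of $S\otimes_R R'$, which you never prove and which can fail. If the residue field $\ell$ of $S$ is purely inseparable over $k$, the closed fiber of $S\to S\otimes_R R'$ is $\ell\otimes_k\bar k$, which is not reduced. Already for $R=k$ and $S=\ell=k(a^{1/p})$ one gets $S\otimes_k\bar k\cong\bar k[x]/(x^p)$, which is not G-regular. You must produce the test module over (the completion of) $R$ itself. The paper does this with [CDT14, Proposition 2.7], which from $\cx_R S<\codim R$ yields a finitely generated $M$ with $\pd_R M=\infty$ and $\Tor^R_{\gg0}(M,S)=0$. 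It then uses (the proof of) [Ber, Proposition 2.2(i)] to arrange $\cx_R M=1$. Your completion step also needs justification: $S\otimes_R\widehat R$ is the $\n$-adic completion of $S$ because $\phi$ is finite and local, and it is G-regular by [T08, Corollary 4.7].

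The second gap is the step you yourself flag, showing that $N\otimes_R S$ is totally reflexive. Here $\fd_R S$ may be infinite, so the argument of (1) is unavailable. Trivial intersection of support varieties gives only \emph{eventual} vanishing $\Tor^R_{\gg0}(N',S)=0$ for the cosyzygies $N'$ of $N$. But exactness of $F\otimes_R S$ and of $\Hom_S(F\otimes_R S,S)\cong\Hom_R(F,S)$ needs vanishing from degree one at every image of $F$. The missing ingredient is periodicity, which your choice of complexity one already makes available. Since $R$ is a complete complete intersection and $N$ is maximal Cohen--Macaulay of complexity one, Eisenbud's theorem [E80, Theorem 4.1] lets you assume $\syz^2N\cong N$, so $F$ can be taken $2$-periodic. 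Then $F\otimes_R S$ and $\Hom_R(F,S)$ are $2$-periodic complexes. Their (co)homology at any position equals that at some positive position, namely $\Tor^R_{>0}(N,S)$ and $\Ext^{>0}_R(N,S)$, which you have already arranged to vanish. Hence both complexes are exact and $N\otimes_R S\in\Gproj S$. The paper also relies on $2$-periodicity, but controls the dual side through $\Tor^R_{>0}(M^*,S)=0$. It obtains this from [S14, Corollary 2.8] applied to the thick subcategory $\{X\in\cm(R)\mid\Tor^R_{>0}(X,S)=0\}$, which is thick by the rigidity result [Avr, Theorem 9.3.6].
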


\begin{proof}
(1) Let $M \in \Gproj R$.
As $\fd_R S < \infty$, we have $\Tor_{>0}^R(M\oplus M^*,S)=0$ by \cite[Theorem 5.3.6]{Chr}.
We get
$$
\RHom_S(M\otimes_R S,S)\cong \RHom_S(M\otimes_R^\mathbf{L} S,S)\cong \RHom_R(M,S)\cong \RHom_R(M,R)\otimes_R^\mathbf{L}S\cong M^*\otimes_R S,
$$
where the third isomorphism follows from \cite[A.4.23]{Chr}.
%\cite[Theorem 8.4.12(b)]{CFH24}
Also, $\RHom_S(M^*\otimes_R S,S)\cong M^{**}\otimes_R S\cong M\otimes_R S$.
Hence $M\otimes_R S \in \Gproj S=\proj S$ as $S$ is G-regular. For any $\m \in \Max R$, there exists $P \in \spec S$ such that $P\cap R=\m$ by assumption. Since $R/\m$ is a field, we have $S/P\cong(R/\m)^{\oplus \Lambda}$ for some set $\Lambda$, which implies 
$$
(M\otimes_R^\mathbf{L} R/\m)^{\oplus \Lambda}
\cong (M\otimes_R^\mathbf{L} R/\m) \otimes_{R/\m}^\mathbf{L} S/P
\cong (M\otimes_R^\mathbf{L} S)\otimes_S^\mathbf{L} S/P 
\cong (M\otimes_R S)\otimes_S S/P.
$$
It follows that $\Tor^R_{>0}(M, R/\m)=0$, and therefore $M_\m$ is free over $R_\m$.
Thus $M$ is a projective $R$-module.

(2) Let $\m$ and $\n$ be the maximal ideals of $R$ and $S$, respectively.
As $\phi$ is finite and local, $\m S$ is $\n$-primary and $\n\cap R=\m$.
If $\cx_R S=0$, then $\pd_R S<\infty$, $R$ is G-regular by (1), $R$ is regular as it is Gorenstein, and $\cx_R S=\codim R=0$.
Let $\cx_R S>0$.
Let $\widehat{R}$ be the $\m$-adic completion of $R$.
Note that $\widehat{S}:= S\otimes_R \widehat{R}$ coincides with the $\n$-adic completion of $S$.
It is seen from \cite[Corollary 4.7]{T08} that $\widehat S$ is G-regular.
We have $\cx_{\widehat R}\widehat{S}=\cx_R S$ and $\codim \widehat{R}=\codim R$.
Replacing $R$ by $\widehat{R}$, we may assume that the local ring $R$ is ($\m$-adically) complete.

Assume that $\cx_R S < \codim R$.
By \cite[Proposition 2.7]{CDT14}, there exists a finitely generated $R$-module $M$ such that $\pd_R M=\infty$ and $\Tor_{\gg0}^R(M,S)=0$.
We may assume $\cx_R M=1$ by (the proof of) \cite[Proposition 2.2(i)]{Ber}.
Replacing $M$ by its sufficiently high syzygy, we may assume $\Tor_{>0}^R(M,S)=0$, $M$ is maximal Cohen--Macaulay and $2$-periodic by \cite[Theorem 4.1]{E80} (this can be applied as $R$ is a complete local complete intersection).

Set $\X=\{X\in \cm(R) \mid \Tor_{>0}^R(X,S)=0\}$.
Then $\X$ is a thick subcategory of $\cm(R)$ containing $R,M$.
Indeed, for $X\in\cm(R)$, $\Tor_{\gg 0}^R(X,S)=0$ implies $\Tor_{>0}^R(X,S)=0$; see \cite[Theorem 9.3.6]{Avr}.
As $R$ is complete, it follows from \cite[Corollary 2.8]{S14} that $M^*$ belongs to $\X$.
Hence $\Tor_{>0}^R(M\oplus M^*,S)=0$.
It is observed from this and the $2$-periodicity of $M$ that tensoring a complete resolution of $M$ with $S$ gives a complete resolution of the $S$-module $M\otimes_R S$.
Thus $M\otimes_R S \in \Gproj S=\proj S$ as $S$ is G-regular.
The same argument as in the latter part of the proof of (1) shows that $M$ is projective over $R$.
This contradicts the equality $\cx_R M=1$.
\end{proof}

\begin{rem}
If $x$ is an $R$-regular element and $R/xR$ is G-regular, then $R$ is also G-regular \cite[Proposition 4.1]{T08}.
For a flat local homomorphism $R\to S$ of commutative noetherian local rings, if $S$ is G-regular, then $R$ is G-regular as well \cite[Proposition 4.2]{T08}.
Theorem \ref{13}(1) is a common generalization of these two facts.
\end{rem}

%%%%%%%%%%%%%%%%%%%%%%%%%%%%%%%%%%%%%%%%%%%%%
\section{Homological finiteness of thick closures}

In this section, we investigate the relationship between the strong G-regularity of a ring and the homological finiteness of certain thick subcategories.
Here, a homologically finite subcategory means one that is either contravariantly finite or covariantly finite.
These notions were first introduced by Auslander and Smal{\o} \cite{AS} in their study of which subcategories admit almost split sequences. They have since become fundamental concepts in the representation theory of algebras.
In particular, the homological finiteness of specific subcategories often characterizes important properties of rings.
For example, let $R$ be a complete commutative noetherian local ring. Auslander--Buchweitz theory \cite{AB} shows that if $R$ is Gorenstein or G-regular, then the category of finitely generated Gorenstein projective $R$-modules is contravariantly finite in $\mod R$.
Conversely, Christensen, Piepmeyer, Striuli, and Takahashi \cite{CPST08} proved that this condition characterizes the property of being either Gorenstein or G-regular.
In this section, we characterize the condition that $R$ is strongly G-regular or Gorenstein in terms of the homological finiteness of certain thick subcategories of $\mod R$.

We begin by recalling the definition of a homologically finite subcategory of an additive category.

\begin{dfn}
Let $\C$ be an additive category and $\X$ a subcategory of $\C$.
A morphism $f:X\to C$ in $\C$ is called a \textit{right $\X$-approximation} of $C$ if $X\in\X$ and for every morphism $f':X'\to C$ with $X'\in\X$, there exists a morphism $g:X'\to X$ such that $f'=fg$.
The subcategory $\X$ is said to be \textit{contravariantly finite} in $\C$ if every object of $\C$ admits a right $\X$-approximation.
Dually, the notions of \textit{left $\X$-approximations} and \textit{covariantly finite} subcategories are defined.
A subcategory is called \textit{functorially finite} if it is both contravariantly finite and covariantly finite.
%For details, we refer the reader to \cite{AS}.
\end{dfn}

%When $R$ is a Cohen--Macaulay ring, we denote by $\cm R$ the subcategory of $\mod R$ consisting of maximal Cohen--Macaulay $R$-modules. In other words, we put $\cm R=\C(R)$ when $R$ is Cohen--Macaulay.

%Let $\X$ be a subcategory of an abelian category $\mathcal{A}$. We denote by $\X^{\perp}$ the subcategory of $\mathcal{A}$ consisting of all objects $M$ satisfying $\Ext_{\mathcal{A}}^{>0}(X,M)=0$ for every $X\in\X$. Similarly, we denote by ${}^{\perp}\X$ the subcategory of $\mathcal{A}$ consisting of all objects $M$ satisfying $\Ext_{\mathcal{A}}^{>0}(M,X)=0$ for every $X\in\X$.
%With this notation, it is immediate that $\X\subset {}^{\perp}(\X^{\perp})\cap({}^{\perp}\X)^{\perp}$.
An additive category $\C$ is functorially finite in itself.
Below we present some examples of homologically finite subcategories in categories of finitely generated modules over noetherian rings.

\begin{rem}\label{contperp}
\begin{enumerate}[\rm(1)]
   \item Let $\Lambda$ be a two-sided noetherian ring.
   Then $\proj\Lambda=\add\Lambda$ is a functorially finite subcategory of $\mod\Lambda$.
   Moreover, assume that $\Lambda$ is a noetherian algebra, and let $\C$ be an additive subcategory of $\mod\Lambda$.
   If a subcategory $\X$ of $\C$ has an additive generator, that is, $\X=\add T$ for some $T\in\X$, then $\X$ is functorially finite in $\C$; see \cite[Section 4]{AS}.
   \item Let $R$ be a Cohen--Macaulay local ring with a canonical module. Then the category $\cm(R)$ of maximal Cohen--Macaulay $R$-modules is a contravariantly finite subcategory of $\mod R$. This is a consequence of Auslander--Buchweitz approximation theory \cite{AB}.
\end{enumerate}
\end{rem}

We next recall the relationship between the strong G-regularity of an artin algebra and the homological finiteness of certain subcategories.
Let $\Lambda$ be an artin algebra, not necessarily commutative. 
It was shown by Beligiannis \cite[Corollary 4.11]{B98} that $\Lambda$ is strongly G-regular if and only if it is G-regular and virtually Gorenstein.
The notion of virtually Gorenstein algebras was introduced by Beligiannis and Reiten \cite{BR}.
It is defined by an Ext-orthogonality condition between the classes of Gorenstein projective modules and Gorenstein injective modules, and provides a common generalization of Gorenstein algebras and algebras of finite representation type.
Moreover, Beligiannis and Krause \cite{BK} showed that the virtually Gorenstein property can be characterized in terms of finitely generated modules.
In fact, it is shown in \cite[Theorem 1]{BK} that $\Lambda$ is virtually Gorenstein if and only if $\thick_{\mod\Lambda}(\proj\Lambda\cup\inj\Lambda)$ is contravariantly finite in $\mod\Lambda$, if and only if it is covariantly finite in $\mod\Lambda$.
Here, $\inj\Lambda$ denotes the category of finitely generated injective $\Lambda$-modules.
Combining these results, we obtain the following theorem.

\begin{thm}[Beligiannis--Krause, Beligiannis]\label{BKB}
Let $\Lambda$ be an artin algebra. Then the following are equivalent.
\begin{enumerate}[\rm(1)]
\item $\Lambda$ is strongly G-regular.
\item $\Lambda$ is G-regular, and $\thick_{\mod\Lambda}(\proj\Lambda\cup\inj\Lambda)$ is contravariantly finite in $\mod\Lambda$.
\item $\Lambda$ is G-regular, and $\thick_{\mod\Lambda}(\proj\Lambda\cup\inj\Lambda)$ is covariantly finite in $\mod\Lambda$.
\end{enumerate}
\end{thm}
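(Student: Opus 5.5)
The statement is a combination of two cited results, so the plan is to chain them rather than reprove either. The pivot is Beligiannis's theorem \cite[Corollary 4.11]{B98}: an artin algebra $\Lambda$ is strongly G-regular, meaning $\GProj\Lambda=\Proj\Lambda$, if and only if $\Lambda$ is G-regular and virtually Gorenstein. The finitely generated reformulation of virtual Gorensteinness is \cite[Theorem 1]{BK}: $\Lambda$ is virtually Gorenstein if and only if $\thick_{\mod\Lambda}(\proj\Lambda\cup\inj\Lambda)$ is contravariantly finite in $\mod\Lambda$, if and only if it is covariantly finite in $\mod\Lambda$.

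First I check that the notions match those used here. The paper's definition of strong G-regularity is exactly the condition $\GProj\Lambda=\Proj\Lambda$ appearing in \cite{B98}, and G-regularity is the CM-freeness of \cite{Che12}. I also check that the thick closure in \cite{BK} is taken inside $\mod\Lambda$, closed under direct summands and two-out-of-three in short exact sequences. This is the same meaning as the thick closure in a subcategory $\M$ of $\mod R$ defined in Section 2, with $\M=\mod\Lambda$.

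With the notions aligned, the argument runs as follows.
\begin{enumerate}[(a)]
\item (1)$\Rightarrow$(2) and (1)$\Rightarrow$(3): by \cite{B98}, $\Lambda$ is G-regular and virtually Gorenstein. Then \cite[Theorem 1]{BK} gives both contravariant and covariant finiteness of $\thick_{\mod\Lambda}(\proj\Lambda\cup\inj\Lambda)$.
\item (2)$\Rightarrow$(1) and (3)$\Rightarrow$(1): either finiteness condition makes $\Lambda$ virtually Gorenstein by \cite[Theorem 1]{BK}. Together with G-regularity, \cite{B98} then gives strong G-regularity.
\end{enumerate}

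The only real obstacle is bookkeeping. Beligiannis's statement is phrased through the cotorsion-pair or virtually Gorenstein framework and states its CM-free hypothesis as $\Gproj\Lambda=\proj\Lambda$. I must make sure it really gives the equivalence with $\GProj\Lambda=\Proj\Lambda$, not merely an implication, and that no additional hypothesis such as CM-finiteness is silently needed. If the cited corollary states only one direction, I would supply the other direction directly. Every Gorenstein projective module lies in ${}^\perp(\Proj\Lambda)$. Beligiannis's description of virtual Gorensteinness via $\GProj\Lambda=\varinjlim\Gproj\Lambda$ then shows that any Gorenstein projective module is a filtered colimit of finitely generated Gorenstein projective modules, which are projective. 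Over an artin algebra, a Gorenstein projective module that is a filtered colimit of projectives is flat and Gorenstein projective, hence projective.
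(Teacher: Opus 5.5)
Your proposal is correct and matches the paper. The paper also gives no further argument: it obtains the theorem by combining \cite[Corollary 4.11]{B98} (strongly G-regular $\iff$ G-regular and virtually Gorenstein) with \cite[Theorem 1]{BK} (virtually Gorenstein $\iff$ $\thick_{\mod\Lambda}(\proj\Lambda\cup\inj\Lambda)$ is contravariantly finite $\iff$ it is covariantly finite), and it uses the first citation as a genuine equivalence, so your contingency paragraph is not needed (and would in any case only recover the direction from virtual Gorensteinness to strong G-regularity).
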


Quite recently, Dey, Liu, and Lu \cite{DLL26} proved that, for a noetherian algebra $R$ over a commutative Cohen--Macaulay ring and an artin algebra $A$, the weak Gorensteinness of $R$ and the virtual Gorensteinness of $A$, respectively, can be characterized in terms of the thick closure of the ring and its canonical dual.

Our aim in this section is to establish a higher-dimensional analogue of Theorem \ref{BKB} in the commutative setting.
To this end, we first introduce the notion of orthogonal subcategories.
Let $\X$ be a subcategory of an abelian category $\mathcal{A}$. We denote by $\X^{\perp}$ the subcategory of $\mathcal{A}$ consisting of all objects $M$ satisfying $\Ext_{\mathcal{A}}^{>0}(X,M)=0$ for every $X\in\X$. Similarly, we denote by ${}^{\perp}\X$ the subcategory of $\mathcal{A}$ consisting of all objects $M$ satisfying $\Ext_{\mathcal{A}}^{>0}(M,X)=0$ for every $X\in\X$.
With this notation, it is immediate that $\X\subseteq {}^{\perp}(\X^{\perp})\cap({}^{\perp}\X)^{\perp}$.

Under this notation, the following result will be useful. In particular, the latter assertion is Wakamatsu's lemma \cite{AR91, Wak} for resolving subcategories.

\begin{lem}\label{wak}
Let $\X$ be a subcategory of $\mod R$.
Then the following hold.
\begin{enumerate}[\rm(1)]
\item  If there exists an exact sequence $0\to Y\to X\xrightarrow{f}M\to 0$ in $\mod R$ such that $Y\in\X^\perp$ and $X\in\X$, then $f$ is a right $\X$-approximation of $M$.
\item  Suppose that $R$ is henselian and local. Let $\X$ be a resolving subcategory of $\mod R$. If $\X$ is contravariantly finite in $\mod R$, then, for any $M\in\mod R$, there exists an exact sequence $0\to Y\to X\to M\to 0$ in $\mod R$ such that $Y\in\X^{\perp}$ and $X\in\X$.
\end{enumerate}
\end{lem}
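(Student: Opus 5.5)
For part (1), the plan is a direct verification. Let $f':X'\to M$ be any morphism with $X'\in\X$. We apply $\Hom_R(X',-)$ to the sequence $0\to Y\to X\xrightarrow{f}M\to 0$ and obtain the exact sequence
$$
\Hom_R(X',X)\xrightarrow{f\circ-}\Hom_R(X',M)\to\Ext_R^1(X',Y).
$$
The hypothesis $Y\in\X^\perp$ gives $\Ext_R^1(X',Y)=0$. Hence $f\circ-$ is surjective, so $f'=fg$ for some $g:X'\to X$. Since $X\in\X$, this shows that $f$ is a right $\X$-approximation of $M$.

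For part (2), the plan is to follow the standard Wakamatsu argument, using the henselian hypothesis to obtain minimal approximations. Fix $M\in\mod R$. Since $\X$ is contravariantly finite, $M$ has a right $\X$-approximation $f:X\to M$. Because $R$ contains $\X$ and every finitely generated free module maps onto $M$ and lies in $\X$ (as $\X$ is resolving, it is closed under finite direct sums), every free module map to $M$ factors through $f$, so $f$ is surjective.

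Next we make $f$ right minimal. Over a henselian local ring, $\mod R$ is Krull--Schmidt and the endomorphism rings of indecomposable modules are local. By the usual argument, a right approximation then decomposes as $f=(f_0,0):X_0\oplus X_1\to M$, where $f_0$ is right minimal; that is, any $h\in\operatorname{End}(X_0)$ with $f_0h=f_0$ is an automorphism. Since $\X$ is closed under summands, $X_0\in\X$, so we may assume $f$ is minimal. Put $Y=\ker f$.

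The main step is to show $\Ext_R^1(X'',Y)=0$ for all $X''\in\X$; higher Ext then follows because $\X$ is closed under syzygies and $\Ext^i(X'',Y)\cong\Ext^1(\syz^{i-1}X'',Y)$. Take an extension $0\to Y\to E\to X''\to0$ and form the pushout along $Y\to X$. This yields an exact sequence $0\to X\to E'\to X''\to 0$ together with a map $E'\to M$ extending $f$. Since $\X$ is closed under extensions, $E'\in\X$. Because $f$ is an approximation, the map $E'\to M$ factors as $f\circ g$ with $g:E'\to X$. Restricting $g$ to $X$ gives an endomorphism $h$ of $X$ with $fh=f$, and minimality makes $h$ an automorphism. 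Then $h^{-1}g$ is a retraction of $X\to E'$, so the pushout sequence splits. This splitting should force the original extension to split; I expect this diagram chase, which uses the fact that the pushout sequence and the original sequence correspond to the same element via the connecting maps, to be the main point requiring care, as in Wakamatsu's lemma \cite{AR91,Wak}.
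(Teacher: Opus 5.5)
Your argument for (1) is exactly the paper's. For (2), the paper simply cites \cite[Lemma 3.8]{T11}, i.e.\ Wakamatsu's lemma, and uses the henselian hypothesis only to make $\mod R$ Krull--Schmidt so that minimal approximations exist; your reconstruction follows that same route.

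The step you left open closes at once if you use the approximation property a second time. The pushout sequence represents $\iota_*\xi$, where $\xi\in\Ext^1_R(X'',Y)$ is the original extension and $\iota:Y\to X$ is the inclusion. The map $\iota_*:\Ext^1_R(X'',Y)\to\Ext^1_R(X'',X)$ is injective, because the preceding map $\Hom_R(X'',X)\to\Hom_R(X'',M)$ in the long exact sequence is surjective ($f$ is a right $\X$-approximation and $X''\in\X$). Hence $\xi=0$, so the original extension splits.
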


\begin{proof}
We prove assertion (1). For any $X'\in\X$, the sequence $\Hom_R(X',X)\xrightarrow{\Hom(X',f)}\Hom_R(X',M)\to\Ext_R^1(X',Y)=0$ is exact, which shows that $f$ is a right $\X$-approximation of $M$.
Assertion (2) is nothing but \cite[Lemma 3.8]{T11}.
We note that the assumption that $R$ is henselian and local is imposed to ensure that $\mod R$ is a \textit{Krull--Schmidt category}; see \cite[Section 1]{LW}.
\end{proof}

The following theorem provides a higher-dimensional analogue of Theorem \ref{BKB} for commutative rings.
Furthermore, in our setting, the assumption that the ring is G-regular is not required.
The essential part of the proof of the following theorem is based on \cite[Theorem 1.4]{T11}.

\begin{thm}\label{contfin}
Let $R$ be a henselian Cohen--Macaulay local ring with a canonical module $\omega$. Assume that $R$ is not Gorenstein. Then the following are equivalent.
\begin{enumerate}[\rm(1)]
   \item $R$ is strongly G-regular.
   \item $\thick_{\cm(R)}\{R,\omega\}$ is contravariantly finite in $\cm(R)$.
   \item $\thick_{\cm(R)}\{R,\omega\}$ is covariantly finite in $\cm(R)$.
\end{enumerate}
\end{thm}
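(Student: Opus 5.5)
Write $\T=\thick_{\cm(R)}\{R,\omega\}$. The direction (1)$\Rightarrow$(2),(3) is quick. If $R$ is strongly G-regular, then Theorem \ref{str G-req quasi-dom} (with Remark \ref{12}, since a canonical module gives a dualizing complex $D=\omega$) yields $\thick_{\Db(R)}\{R,\omega\}=\Db(R)$. Proposition \ref{2} restricts this to thick subcategories of $\c(R)=\cm(R)$ containing $R$. Hence $\T=\thick_{\Db(R)}\{R,\omega\}\cap\cm(R)=\cm(R)$, and $\cm(R)$ is functorially finite in itself.

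The converse is the main work. I will show that either finiteness condition forces $\T=\cm(R)$. Then $\thick_{\Db(R)}\{R,\omega\}=\Db(R)$ by Proposition \ref{2}, and Theorem \ref{str G-req quasi-dom} gives (1).

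For (2)$\Rightarrow$(1), I argue as follows. The subcategory $\T$ is thick in $\cm(R)$ and contains $R$, so it is resolving inside $\cm(R)$. I want a Wakamatsu-type statement in the spirit of Lemma \ref{wak}(2), applied inside $\cm(R)$ rather than $\mod R$. One option is to compose with the MCM approximations of Remark \ref{contperp}(2), which makes $\T$ contravariantly finite in $\mod R$. Then Lemma \ref{wak}(2) applies, since $\mod R$ is Krull--Schmidt because $R$ is henselian. Take $M=k$, or better a high syzygy $N=\syz^{d}k\in\cm(R)$. This gives $0\to Y\to X\to N\to0$ with $X\in\T$ and $Y\in\T^\perp$; in the $\cm(R)$ version, $Y$ is also MCM.

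Now use $\omega\in\T$ together with $R\in\T$. First, $Y\in\T^\perp$ gives $\Ext^{>0}(\omega,Y)=0$. By the Auslander--Buchweitz theory for MCM modules, an MCM module $Y$ with $\Ext^{>0}_R(\omega,Y)=0$... I need the reverse direction. Since $\T$ is closed under $\omega$-cosyzygies within $\cm(R)$ (as $\omega$-resolutions stay in $\T$), the standard argument of \cite[Theorem 1.4]{T11} applies: $Y\in\T^\perp\cap\cm(R)$ together with $\T\supseteq\{R,\omega\}$ forces $Y$ to have finite injective dimension, so $Y\in\add\omega\subseteq\T$. Then $N\in\T$. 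Since $k$ is built from $N$ and free modules, $k\in\thick_{\Db(R)}\{R,\omega\}$. The same argument localizes: $\T_\p$ stays contravariantly finite after passing to completions and localizations. Alternatively, I avoid localization by applying the argument to every $M\in\cm(R)$ in place of $N$: each $M$ gets $0\to Y\to X\to M\to0$ with $Y\in\add\omega$, so $M\in\T$ directly. This alternative is cleaner and is the one I will use.

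For (3)$\Rightarrow$(1), I dualize via $(-)^\dagger=\Hom_R(-,\omega)$, which is an exact duality of $\cm(R)$ swapping $R$ and $\omega$. It therefore sends $\T$ to itself and interchanges covariant and contravariant finiteness, so (3) is equivalent to (2).

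The main obstacle is the key step in (2)$\Rightarrow$(1): showing that an MCM module $Y$ with $\Ext^{>0}(\T,Y)=0$ lies in $\add\omega$. This needs care, because $\T$ is only thick, not closed under cosyzygies in general. I will use that $\Ext^{>0}(\omega,Y)=0$ and $\Ext^{>0}(R,Y)=0$, and run Takahashi's argument from \cite{T11}. Concretely, I will take a right $\add\omega$-approximation of $Y$, whose kernel lies in $\T^\perp$ as well, and iterate. Then I will use the non-Gorensteinness together with Krull--Schmidt to prevent an infinite nonsplit process, concluding $\operatorname{id}Y<\infty$.
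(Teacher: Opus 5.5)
Your arguments for (1)$\Rightarrow$(2),(3) and for (2)$\Leftrightarrow$(3) via $(-)^\dagger=\Hom_R(-,\omega)$ match the paper. Your reduction in (2)$\Rightarrow$(1) is also fine as far as it goes. Lemma \ref{wak}(2) gives $0\to Y\to X\to M\to0$ with $X\in\T$ and $Y\in\T^\perp\cap\cm(R)$, and $Y\in\add\omega$ would give $M\in\T$.

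The gap is exactly the step you flag: the claim $\T^\perp\cap\cm(R)\subseteq\add\omega$ is never proved, and \cite[Theorem 1.4]{T11} does not apply as you invoke it. That theorem concerns a \emph{contravariantly finite resolving} subcategory containing a module $N$ of infinite projective dimension with $\Ext^{>0}_R(N,R)=0$. Dually, you need $Y^\dagger$ to be free. Now $Y^\dagger$ lies in $\C:={}^\perp\T\cap\cm(R)\subseteq{}^\perp R$, since $\Ext^i_R(Y^\dagger,T)\cong\Ext^i_R(T^\dagger,Y)=0$. But nothing you have shows that $\C$ is contravariantly finite. You also cannot use $\T$ in its place, since you do not know $Y^\dagger\in\T$.

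Your iteration does not close this. It is well defined: using $0\to T\to\omega^n\to C\to0$ with $C\in\T$ and $\Ext^1_R(C,Y)=0$, the right $\add\omega$-approximation of $Y$ is onto, with kernel again in $\T^\perp\cap\cm(R)$. But dualizing the resulting $\omega$-resolution gives a $\Hom_R(-,R)$-exact free coresolution of $Y^\dagger$, so you only get $Y^\dagger\in\Gproj R$. Going from totally reflexive to free is G-regularity, which is not assumed. Dually, your infinite nonsplit process is half of a complete resolution of $Y^\dagger$, and non-Gorensteinness plus Krull--Schmidt does not rule that out.

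The missing idea is how the paper makes $\C$ contravariantly finite.
\begin{enumerate}[(a)]
\item Approximate $(\syz M)^\dagger$ rather than $M$, giving $0\to Y\to X\to(\syz M)^\dagger\to0$.
\item Pull back along $\omega^n\to(\syz M)^\dagger$ from $0\to M^\dagger\to\omega^n\to(\syz M)^\dagger\to0$. The resulting sequence $0\to Y\to W\to\omega^n\to0$ splits because $Y\in\T^\perp$. This gives $0\to M^\dagger\to Y\oplus\omega^n\to X\to0$.
\item Dualizing gives $0\to X^\dagger\to Y^\dagger\oplus R^n\to M\to0$, with $Y^\dagger\oplus R^n\in\C$ and $X^\dagger\in\T\subseteq\C^\perp$. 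By Lemma \ref{wak}(1) this is a right $\C$-approximation. So $\C$ is a contravariantly finite resolving subcategory of $\mod R$ inside ${}^\perp R$.
\item Now \cite[Theorem 1.4]{T11} applies to $\C$. If $Y^\dagger$ had infinite projective dimension, $\C$ would contain $\syz^dk$. That forces $\Ext^{>d}_R(k,R)=0$, so $R$ would be Gorenstein, a contradiction.
\end{enumerate}
Hence $Y^\dagger$ is free and $M\in\T$. Your claim $\T^\perp\cap\cm(R)=\add\omega$ does hold a posteriori, but only via this argument.
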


\begin{proof}
Note that $R$ is strongly G-regular if and only if $\thick_{\Db(R)}\{R,\omega\}=\Db(R)$ by the equivalence (1)$\Leftrightarrow$(3) in Theorem \ref{str G-req quasi-dom}; see also Remark \ref{12}.
Moreover, by restricting the correspondence in Proposition \ref{2} and noting that $\c(R)=\cm(R)$ in our setting, we see that the latter condition is equivalent to $\thick_{\cm(R)}\{R,\omega\}=\cm(R)$.
Therefore, the implications (1)$\Rightarrow$(2) and (1)$\Rightarrow$(3) hold.
We prove the remaining implications.
Let $\X=\thick_{\cm(R)}\{R,\omega\}$ and set $(-)^\dagger=\Hom_R(-,\omega)$.
%Then, by the duality $(-)^\dagger:\cm(R)\xrightarrow{\sim}\cm(R)$, we obtain $\X^\dagger=\X$, and we will freely use this fact throughout the proof.
Note that we have duality $(-)^\dagger:\cm(R)\xrightarrow{\sim}\cm(R)$ and induced equality $\X^\dagger=\X$.
We can easily show that the equivalence of (2) and (3) follows from these.
Indeed, for a morphism $f$ in $\cm(R)$, it is a right $\X$-approximation of $M\in\cm(R)$ if and only if its canonical dual $f^\dagger$ is a left $\X$-approximation of $M^\dagger$.
This follows from the canonical duality, and hence (2) and (3) are equivalent.
%Indeed, assume that $\X$ is contravariantly finite in $\cm(R)$ and let $M\in\cm(R)$. Then, since $M^\dagger$ is also in $\cm(R)$, it has a right $\X$-approximation $f:X\to M^\dagger$.
%Then, for any $X'\in\X$, the homomorphism $\Hom_R({X'}^\dagger,f)$ is surjective, and the diagram
%$$
%\xymatrix{
%\Hom_R(X^\dagger,X') \ar[rrr]^-{\Hom_R(f^\dagger,X')} \ar[d]_{\cong}^{(-)^\dagger}&&&\Hom_R(M,X') \ar[d]^{\cong}_{(-)^\dagger}\\
%\Hom_R({X'}^\dagger,X) \ar[rrr]_-{\Hom_R({X'}^\dagger,f)}&&&\Hom_R({X'}^\dagger,M^\dagger).
%}
%$$
%commutes. The homomorphism $\Hom_R(f^\dagger,X')$ is surjective, and hence $f^\dagger:M\to X^\dagger$ is a left $\X$-approximation of $M$.
%This proves (2)$\Rightarrow$(3). The converse follows by a dual argument. 
Therefore, it remains to prove (2)$\Rightarrow$(1).
Fix $M\in\cm(R)$, and consider the exact sequence $0\to M^\dagger\to \omega^{\oplus n}\to(\Omega M)^\dagger\to 0$
in $\cm(R)$.
Now, since $\X$ is contravariantly finite in $\cm(R)$ by assumption and $\cm(R)$ is contravariantly finite in $\mod R$ by Remark \ref{contperp}(2), it follows that $\X$ is contravariantly finite in $\mod R$. Moreover, since $\cm(R)$ is a resolving subcategory of $\mod R$, it is easy to see that $\X$ is also a resolving subcategory of $\mod R$.
Therefore, by Lemma \ref{wak}(2), there exists an exact sequence $0\to Y\to X\to(\Omega M)^\dagger\to 0$ in $\mod R$ such that $Y\in\X^\perp$ and $X\in\X$.
Note that, in general, $\X^\perp$ is not contained in $\cm(R)$. However, since both $X$ and $(\Omega M)^\dagger$ belong to $\cm(R)$, so does $Y$.
We consider the following pullback diagram
\enlargethispage*{3\baselineskip}
$$
\xymatrix@C=2em@R=1.5em{
%\xymatrix@R-1.2pc@C+1pc{
&&0\ar[d]&0\ar[d]&\\
&&M^\dagger\ar[d]\ar@{=}[r]&M^\dagger\ar[d]&\\
0\ar[r]&Y\ar@{=}[d]\ar[r]&W\ar[d]\ar[r]&\omega^{\oplus n}\ar[d]\ar[r]&0\\
0\ar[r]&Y\ar[r]&X\ar[d]\ar[r]&(\Omega M)^\dagger\ar[d]\ar[r]&0\\
&&0&0.&
}
$$
Then the second row splits since $Y\in\X^\perp$ and $\omega^{\oplus n}\in\X$.
Hence the second column yields an exact sequence $0\to M^\dagger\to Y\oplus\omega^{\oplus n}\to X\to 0$ in $\cm(R)$.
Taking $(-)^\dagger$, we obtain an exact sequence $0\to X^\dagger\to Y^\dagger\oplus R^{\oplus n}\xrightarrow{f}M\to0$.
Moreover, $Y^\dagger\oplus R^{\oplus n}\cong (Y\oplus\omega^{\oplus n})^\dagger\in(\X^\perp\cap\cm(R))^\dagger$.
For the rest of the proof, we consider the subcategory $\C:={}^{\perp}\X\cap\cm(R)$.
Then we see that $(\X^\perp\cap\cm(R))^\dagger\subseteq\C$.
Indeed, let $T\in\X^\perp\cap\cm(R)$.
For any $X\in\X$ and any $i>0$, we have $\Ext_R^i(T^\dagger,X)\cong\Ext_R^i(X^\dagger,T)=0$, where the last equality follows from the fact that $X^\dagger\in\X^\dagger=\X$.
Thus $T^\dagger\in\C$, and the desired inclusion follows.
Also, it immediately follows that $\X\subseteq({}^{\perp}\X\cap\cm(R))^\perp=\C^\perp$.
Thus, $Y^\dagger\oplus R^{\oplus n}\in\C$ and $X^\dagger\in\X\subseteq\C^\perp$.
Hence, by Lemma \ref{wak}(1), the homomorphism $f:Y^\dagger\oplus R^{\oplus n}\to M$ is a right $\C$-approximation of $M$.
Therefore, $\C$ is contravariantly finite in $\cm(R)$, and hence also in $\mod R$.
It is easy to see that ${}^{\perp}\X$ is a resolving subcategory of $\mod R$.
Moreover, since $\cm(R)$ is also a resolving subcategory of $\mod R$, so is $\C={}^{\perp}\X\cap\cm(R)$.
Since $\C$ is closed under direct summands, it follows that $Y^\dagger\in\C$.
In particular, by the definition of $\C$, one has $\Ext_R^{>0}(Y^\dagger,R)=0$.
If $Y^\dagger$ has infinite projective dimension, then $\C=\cm(R)$ or $\mod R$ by \cite[Theorem 1.4]{T11}.
In particular, $\Omega^d k\in\C\subseteq{}^{\perp}\X$, where $d=\dim R$, and hence $\Ext_R^{>d}(k,R)=0$.
This implies that $R$ is Gorenstein, contradicting our assumption.
Therefore, $Y^\dagger$ has finite projective dimension.
However, since $Y^\dagger$ is Cohen--Macaulay, it follows that $Y^\dagger$ is free.
As both $X^\dagger$ and $Y^\dagger\oplus R^{\oplus n}$ belong to $\X$, the exact sequence $0\to X^\dagger\to Y^\dagger\oplus R^{\oplus n}\to M\to0$ implies that $M\in\X$.
Thus $\X=\cm(R)$, and $R$ is strongly G-regular.
\end{proof}

The following corollary follows from the above theorem and Remark \ref{contperp}(1).
In particular, assertion (2) is the same as \cite[Theorem 1.3{\rm(1)}]{DKLO25}; see also Corollary \ref{sgrprprng}{\rm(3)}.
Hence, the same conclusion is recovered under a slightly different assumption.

\begin{cor}\label{contfincor}
Let $R$ be a henselian Cohen--Macaulay local ring with a canonical module $\omega$. Then the following hold.
\begin{enumerate}[\rm(1)]
    \item
    If $\thick_{\cm(R)}\{R,\omega\}$ is of finite representation type, then $R$ is strongly G-regular or Gorenstein.
    \item 
    If $R$ is of finite CM-representation type, then $R$ is strongly G-regular or Gorenstein.
\end{enumerate}
\end{cor}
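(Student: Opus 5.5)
The plan is to reduce both assertions to Theorem \ref{contfin}, using Remark \ref{contperp}(1) to get the finiteness hypothesis.

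For (1), we split into cases. If $R$ is Gorenstein, there is nothing to show. Otherwise $R$ is a henselian non-Gorenstein Cohen--Macaulay local ring with canonical module $\omega$, which are exactly the hypotheses of Theorem \ref{contfin}. Set $\X=\thick_{\cm(R)}\{R,\omega\}$. By finite representation type, $\X$ has only finitely many isomorphism classes of indecomposable objects, say $T_1,\dots,T_m$. Since $R$ is henselian, $\mod R$ is Krull--Schmidt, so every object of $\X$ is a finite direct sum of the $T_i$. Every direct summand of an object of $\X$ again lies in $\X$, because thick subcategories are closed under summands. Hence $\X=\add T$ with $T=T_1\oplus\cdots\oplus T_m$.

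Now $\cm(R)$ is an additive subcategory of $\mod R$, and $R$ is a noetherian algebra over itself. Remark \ref{contperp}(1) therefore says that $\X$ is functorially finite in $\cm(R)$, and in particular contravariantly finite. The implication (2)$\Rightarrow$(1) of Theorem \ref{contfin} then gives that $R$ is strongly G-regular.

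For (2), suppose $R$ has finite CM-representation type. Every indecomposable object of $\X$ is an indecomposable maximal Cohen--Macaulay module, because $\X\subseteq\cm(R)$. So $\X$ has only finitely many indecomposables up to isomorphism, i.e.\ it is of finite representation type, and (1) applies.

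The only point needing care is the Krull--Schmidt step, which justifies writing $\X=\add T$. This is exactly where the henselian hypothesis enters; otherwise the argument is a direct application of earlier results.
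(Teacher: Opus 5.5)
Your proposal is correct and follows the paper's route: finite representation type gives $\X=\add T$, Remark \ref{contperp}(1) makes $\X$ contravariantly finite in $\cm(R)$, and Theorem \ref{contfin} handles the non-Gorenstein case, with (2) reducing to (1) because $\X\subseteq\cm(R)$. One correction to your closing comment: $\X=\add T$ only needs that finitely generated modules decompose into finitely many indecomposables, which holds over any noetherian ring, so the henselian hypothesis is not used there but inside Theorem \ref{contfin} (for instance in Lemma \ref{wak}(2)).
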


%%%%%%%%%%%%%%%%%%%%%%%%%%%%%%%%%%%%%%%%%
\section{Examples and counterexamples for strong G-regularity}

In this section, we show examples of rings that are strongly G-regular and of rings that are not. We first present several new constructions of (locally) dominant rings that fall outside the scope of the existing criteria. The ring arising from these constructions that is not a hypersurface is strongly G-regular. We then construct commutative artinian local rings that are G-regular and satisfy $\TR$ but are not strongly G-regular. In particular, this gives a negative answer to Question \ref{main ques}(1), even in the artinian case, and, moreover, provides negative answers to the open problems \cite[Problems A, B, and C]{Che17} for artin algebras.

The following two families of dominant rings, given in Propositions \ref{power} and \ref{T23(8.8)}, are constructed from suitable Burch rings using the preservation of dominance under various operations established in \cite{T23}.

\begin{prop}\label{power}
Let $S$ be a regular ring and $f_1,\ldots,f_n$ a regular sequence on $S$. Then $R=S/(f_1,\ldots,f_n)^s$ is locally dominant for every $s>1$.
\end{prop}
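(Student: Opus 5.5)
Fix a prime $P$ of $R$, i.e.\ a prime of $S$ containing $I=(f_1,\ldots,f_n)$. We must show that $R_P=S_P/I_P^s$ is dominant. Localization preserves the hypotheses: $S_P$ is regular local, and the images of the $f_i$ lying in $P$ form an $S_P$-regular sequence. If some $f_i\notin P$, then $I_P=S_P$, which cannot happen since $I\subseteq P$. So all $f_i\in P S_P$, and $f_1,\dots,f_n$ remains a regular sequence in the regular local ring $(S_P,\n)$. Thus we reduce to the case where $S$ is regular local with maximal ideal $\n$, $f_1,\dots,f_n\in\n$ form a regular sequence, and the goal is to show that $R=S/I^s$ is dominant.

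The plan is to realize $R$ as a quotient of a Burch ring by a regular sequence, or as a deformation of one, and to use the stability of dominance from \cite{T23}. By \cite[Theorem 5.6]{T23} (the dominance analogue of Proposition \ref{qdomdeform}(1)), if $x$ is $A$-regular and $A/xA$ is dominant, then $A$ is dominant. Consider $T=S[\![X_1,\dots,X_n]\!]$, the ideal $J=(X_1,\dots,X_n)^s$, and $A=T/J$. The elements $X_i-f_i$ form a regular sequence on $A$, because $A$ is flat over $S$ with $A/(X_i-f_i)\cong S/I^s=R$; this is the standard flatness argument via a graded/filtered comparison. One then checks that $A$ can be reached from the ring $S/\n\,[\![X]\!]/(X)^s$ (after completing, using Corollary \ref{compqdom}-type arguments for dominance, namely \cite[Corollary 5.8]{T23}) by adjoining regular elements. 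Concretely, $\widehat{A}\cong \widehat{S}[\![X]\!]/(X)^s$, and $\widehat S$ is a power series ring over a regular ring, or, in mixed characteristic, a quotient of one by a regular element. So dominance of $A$ follows from dominance of $B=C[\![X_1,\dots,X_n]\!]/(X_1,\dots,X_n)^s$ for a field or DVR $C$, repeatedly applying \cite[Theorem 5.6]{T23} with the regular parameters of $S$.

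It remains to show that $B$ is dominant. For $s>1$ the ideal $\m^s$ in a regular local ring gives a Burch ring: $\m^s\ne\m(\m^s:\m)$ because $\m^s:\m=\m^{s-1}$ and $\m\cdot\m^{s-1}=\m^s$. Hence the Burch condition $\m I\ne \m(I:\m)$ needs care, and I would instead use the criterion in \cite{burch} that $S/\m^s$ (and $S/\m^s$ with $S$ regular local) is Burch, which is known; see also the reference in Example \ref{1}(1). In fact $\m^s:\m=\m^{s-1}$ and $\m(\m^s:\m)=\m^s\neq \m\cdot\m^s$, which is exactly the Burch condition. Burch rings are dominant by Example \ref{1}(1). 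Finally, we pass back down from $A$ to $R=A/(X_i-f_i)$. Here the deformation direction is the wrong one for \cite[Theorem 5.6]{T23}, which lifts dominance from $A/xA$ to $A$. So instead I would directly apply the Burch criterion to $R$ viewed as $T/(J+(X_i-f_i))$.

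The expected main obstacle is exactly this direction issue. Dominance ascends along regular-element deformations, but we need it to descend to $S/I^s$. Therefore I expect the real argument to be a direct verification that $R_P=S_P/I_P^s$ itself is Burch, or becomes Burch after modding out a suitable part of a regular system of parameters complementary to $f_1,\dots,f_n$. We extend $f_1,\dots,f_n$ as far as possible. If $f_1,\dots,f_n$ is part of a regular system of parameters, then $S_P/I^s$ modulo the complementary parameters is $S'/\n'^s$, which is Burch, hence dominant, and \cite[Theorem 5.6]{T23} lifts dominance to $R_P$, since those parameters are $R_P$-regular because $R_P$ is flat over the subring generated by them. In the general case, where the $f_i$ lie in $\n^2$, I would check the Burch condition $\n I^s\ne\n(I^s:\n)$ directly, using that $\operatorname{gr}_I S$ is a polynomial ring over $S/I$. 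This gives $I^s:\n\subseteq I^{s-1}$-type containments and shows that $\n(I^s:\n)$ contains an element outside $\n I^s$.
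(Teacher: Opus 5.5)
The gap is in your last step. You read \cite[Theorem 5.6]{T23} as an ascent statement only, so you abandon your construction just before it would finish. That theorem is two-sided. It is the dominance counterpart of both parts of Proposition \ref{qdomdeform}, and in particular of part (2): if $x$ is $A$-regular, $x\notin\m_A^2$, and $A$ is dominant, then $A/xA$ is dominant. This is exactly the descent you need.

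With $A=S\llbracket X_1,\dots,X_n\rrbracket/(X_1,\dots,X_n)^s$, the argument runs as follows.
\begin{enumerate}
\item The elements $X_1-f_1,\dots,X_n-f_n$ form an $A$-regular sequence. The clean reason is that $A$ is Cohen--Macaulay and $\dim A-\dim R=n$. Your stated reason, ``$A$ is flat over $S$ and $A/(X-f)A\cong R$'', does not by itself give regularity.
\item We have $A/(X_1-f_1,\dots,X_{i-1}-f_{i-1})A\cong S\llbracket X_i,\dots,X_n\rrbracket/(f_1,\dots,f_{i-1},X_i,\dots,X_n)^s$. Since $s>1$, this defining ideal lies in the square of the maximal ideal. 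So $X_i$ remains a minimal generator, and $X_i-f_i$ is not in the square of the maximal ideal.
\item Descending $n$ times shows that $R$ is dominant.
\end{enumerate}
This is precisely the paper's proof. Your way of getting dominance of $A$ is a valid substitute for the paper's appeal to \cite[Proposition 7.4]{T23} (flat local map $S\to A$ with regular base and dominant closed fibre). You cut $A$ down by a regular system of parameters of $S$, which is $A$-regular because $A$ is $S$-free. This reaches the Burch ring $k[X_1,\dots,X_n]/(X_1,\dots,X_n)^s$, and you then ascend. The completion and field/DVR detour is not needed.

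The fallback you propose does not work outside your ``part of a regular system of parameters'' case. If $n<\dim S_P$, then $R_P$ is Cohen--Macaulay of positive depth. Hence $I^s:\n=I^s$, and the condition $\n I^s\neq\n(I^s:\n)$ fails trivially.

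Even in depth zero it can fail. Take $S=k\llbracket x,y\rrbracket$, $f_1=x^2$, $f_2=y^2$, $s=2$. Then $I^2=(x^4,x^2y^2,y^4)$ and $I^2:\n=I^2+(x^3y,xy^3)$. But $x^4y,\ x^3y^2,\ x^2y^3,\ xy^4$ all lie in $\n I^2$, so $\n(I^2:\n)=\n I^2$. This ring is dominant by the proposition, yet the Burch condition you plan to verify fails for it. The same happens for $k\llbracket x,y,z\rrbracket/(x^2,y^2)^2$ after cutting down by $z$.

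Producing such non-Burch dominant rings is the point of the proposition. So the descent step via \cite[Theorem 5.6]{T23} cannot be replaced by a direct Burch check.
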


\begin{proof}
First, we assume that $S$ is local and prove that $R$ is dominant.
By \cite[Corollary 5.8]{T23}, we may assume that $S$ is complete. Set $A=S\llbracket x_1,\ldots,x_n\rrbracket/(x_1,\ldots,x_n)^s$. The natural ring homomorphism $S\to A$ is flat and local. Let $\m$ be the maximal ideal of $S$ and let $k$ be the residue field of $S$. By \cite[Proposition 5.10(1)]{T23} and \cite[Proposition 6.3(1)(g)]{KT}, $A/\m A=k\llbracket x_1,\ldots,x_n\rrbracket/(x_1,\ldots,x_n)^s$ is dominant. Since $S$ is regular, $A$ is also dominant by \cite[Proposition 7.4]{T23}. 
Put $I=(x_1-f_1,\ldots,x_n-f_n)A$. We have $A/I\cong R$. Since $A$ is Cohen--Macaulay, the equality
$$\grade I=\height I=\dim A-\dim R=\dim S-(\dim S-n)=n$$ 
holds. This shows that $x_1-f_1,\ldots,x_n-f_n$ is a regular sequence on $A$. As $s>1$, the image of $x_i-f_i$ does not belong to the square of the maximal ideal of $A/(x_1-f_1,\ldots,x_{i-1}-f_{i-1})A$. Applying \cite[Theorem 5.6]{T23} repeatedly, it follows that $R$ is dominant. 
For a general regular ring $S$, localizing at any prime ideal $\q$ of $S$ containing $f_1,\ldots,f_n$, the local case shows that $R_\q$ is dominant. This means that $R$ is locally dominant.
\end{proof}

When $S$ has finite Krull dimension, $R$ admits a dualizing complex; hence, by Corollary \ref{lqdom}, the corollary below shows that the ring $R$ constructed in Proposition \ref{power} is strongly G-regular unless $n=1$.

\begin{cor}\label{5.2}
Let $S$ be a regular ring of finite Krull dimension, let $f_1,\ldots,f_n$ be a regular sequence on $S$, and let $R=S/(f_1,\ldots,f_n)^s$ where $s>1$ and $n\ge 1$.
Then $R$ is strongly G-regular if and only if $n>1$.
\end{cor}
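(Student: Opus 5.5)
We will use the criterion (1)$\Leftrightarrow$(7) of Theorem \ref{str G-req quasi-dom} (equivalently Corollary \ref{sgrprprng}(1)). First we check the standing hypothesis $(\#)$. Since $S$ is regular of finite Krull dimension, $S$ is a dualizing complex for itself and is excellent, assuming excellence holds or arguing via Aoki's result \cite{A21} for quasi-excellent rings. Hence $R=S/(f_1,\ldots,f_n)^s$ is a quotient of $S$, admits a dualizing complex, and is excellent. So $R$ satisfies $(\#)$ by Remark \ref{9}(2). By Proposition \ref{power}, $R$ is locally dominant, hence locally quasi-dominant by Example \ref{1}(1). Therefore, by Corollary \ref{sgrprprng}(1), $R$ is strongly G-regular if and only if $\sing R=\ng R$. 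The whole proof thus reduces to comparing the singular and non-Gorenstein loci.

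Put $I=(f_1,\ldots,f_n)$. Every prime $\p$ of $R$ corresponds to a prime $\q\supseteq I$ of $S$, and $R_\p=S_\q/I_\q^s$, where $f_1,\ldots,f_n$ is a regular sequence in the regular local ring $S_\q$. Since $s>1$, the ideal $I_\q^s$ is not generated by part of a regular system of parameters: $R_\p$ has dimension $\dim S_\q-n$ but is not regular, as $I^s\subseteq\q^2$. Hence $\sing R=\spec R$. Moreover $R_\p$ is Cohen--Macaulay, because powers of ideals generated by regular sequences are perfect; the associated graded ring $\operatorname{gr}_I S$ is a polynomial ring over $S/I$, so the Eagon--Northcott type resolution of $I^s$ has length $n$.

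It remains to decide whether each $R_\p$ is Gorenstein.

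\textbf{Case $n=1$.} Here $I^s=(f_1^s)$ is principal and generated by a regular element, so $R_\p$ is a hypersurface, hence Gorenstein. Thus $\ng R=\emptyset\neq\spec R=\sing R$, and $R$ is not strongly G-regular. This uses $\spec R\neq\emptyset$, which holds because $I$ is proper as the $f_i$ form a regular sequence.

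\textbf{Case $n>1$.} We must show that $S_\q/I_\q^s$ is never Gorenstein. We compute the Cohen--Macaulay type as the last Betti number of $S_\q/I^s$. The minimal resolution of $I^s$ (equivalently of $(x_1,\ldots,x_n)^s$ in a polynomial ring, base-changed along the flat map $x_i\mapsto f_i$) is the Eagon--Northcott complex. Its last free module has rank $\binom{n+s-2}{n-1}$, which is the same count as for $B/(x_1,\dots,x_4)^2$ in Example \ref{vrns}(3), where it gives $4=\binom{4}{3}$. Since $n\ge2$ and $s\ge2$, this rank is at least $n\ge2$, so the type exceeds $1$ and $R_\p$ is not Gorenstein. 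Hence $\ng R=\spec R=\sing R$, and $R$ is strongly G-regular.

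The main obstacle is making the type computation rigorous. To justify the reduction, we would use that $S_\q$ is flat over $\mathbb{Z}[x_1,\ldots,x_n]$, or over $k[x]$, via $x_i\mapsto f_i$ on the relevant Tor. More directly, a regular sequence gives $\Tor$-independence, so the minimal resolution of $(x)^s$ tensors to a minimal resolution of $I^s$; minimality holds because all entries lie in $(f)\subseteq\q$. The rank of the last module can then be read off the linear resolution of $(x)^s$.
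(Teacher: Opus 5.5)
Your reduction goes through Theorem \ref{str G-req quasi-dom}(1)$\Leftrightarrow$(7), equivalently Corollary \ref{sgrprprng}(1), and both are stated under the standing hypothesis $(\#)$. Your verification of $(\#)$ rests on the claim that a regular ring of finite Krull dimension is excellent, or at least quasi-excellent so that Aoki's theorem \cite{A21} and Remark \ref{9}(2) apply. That claim is false. Nagata's classical examples include discrete valuation rings of positive characteristic that are not Japanese, hence not quasi-excellent, and the corollary puts no excellence hypothesis on $S$. So, as written, your proof only covers excellent $S$. Right after this corollary the paper stresses that it holds without $(\#)$.

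The repair is to notice that the implications you actually use need only a dualizing complex. $R$ has one, since $S$ is a dualizing complex for itself.

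\emph{Case $n>1$.} $R$ is locally dominant by Proposition \ref{power}, and $\ng R=\sing R=\spec R$. Corollary \ref{lqdom}, which assumes only a dualizing complex $D$, matches thick subcategories of $\Db(R)$ containing $R,D$ with specialization-closed subsets between $\ng R$ and $\sing R$. There is exactly one such subset, so $\thick_{\Db(R)}\{R,D\}=\Db(R)$. Remark \ref{12} then gives strong G-regularity.

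\emph{Case $n=1$.} $R$ is Gorenstein and, by your computation of $\sing R$, not regular, so Remark \ref{5}(1) shows it is not strongly G-regular. Alternatively, (1)$\Rightarrow$(3) holds by Remark \ref{12}, and localizing (3) gives (5), which forces a Gorenstein $R_\p$ to be regular.

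This is exactly the paper's argument.

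Your treatment of the non-Gorenstein locus is a valid alternative to the paper's. You compute the Cohen--Macaulay type of every $R_\p$ as the last Betti number $\binom{n+s-2}{n-1}$ of a base-changed linear resolution of $(x_1,\dots,x_n)^s$. The paper instead reduces to minimal primes, where $R$ is artinian, and bounds the socle from below using the free $S/J$-module $J^{s-1}/J^s\subseteq R$ of rank $\binom{n+s-2}{n-1}$. That route is more elementary: it needs neither Cohen--Macaulayness nor resolutions.

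In your version, drop the flatness claim over $P=\mathbb{Z}[x_1,\dots,x_n]$, with $x_i\mapsto f_i$; it fails, for example, in positive characteristic. Your Tor-independence argument is enough on its own. $\Tor^P_{>0}(S,P/(x))=0$ by Koszul acyclicity, and $P/(x)^s$ is filtered by the free $P/(x)$-modules $(x)^j/(x)^{j+1}$.
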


\begin{proof}
If $n=1$, then $R=S/(f_1^s)$ is singular and Gorenstein as $s>1$, and thus it is not strongly G-regular. Suppose $n>1$. We prove $\ng R=\spec R$. To prove this, we show that $R_\p$ is not Gorenstein for every minimal prime ideal $\p$ of $R$. For this purpose, it suffices to prove that $R$ is not Gorenstein when $(S,\m,k)$ is an $n$-dimensional regular local ring and $f_1,\ldots,f_n$ is a maximal $S$-regular sequence. Put $J=(f_1,\ldots,f_n)S$. Since $f_1,\ldots,f_n$ is a regular sequence on $S$, the associated graded ring $\operatorname{gr}_J(S)$ is isomorphic to the polynomial ring $(S/J)[X_1,\ldots, X_n]$ over $S/J$. Then $J^{s-1}/J^s$ is a free $S/J$-module of rank $\binom{n+s-2}{n-1}$. Since $J^{s-1}/J^s$ is an ideal of $R$, we have 
$$
\dim_k (0:_R \m)\ge \dim_k (0:_{J^{s-1}/J^s} \m)= \binom{n+s-2}{n-1} \dim_k (0:_{S/J} \m).
$$
As $s>1$ and $n>1$, one has $\binom{n+s-2}{n-1}\ge 2$, which implies that $R$ is not Gorenstein. 

This establishes $\ng R=\spec R$, and hence, in particular, $\ng R=\sing R$. The ring $R$, being a quotient of a regular ring of finite Krull dimension, admits a dualizing complex $D$. Since $R$ is locally dominant by Proposition \ref{power}, Corollary \ref{lqdom} gives $\thick_{\Db(R)}\{R, D\}=\Db(R)$. Finally, we conclude that $R$ is strongly G-regular by the equivalence $(1)\Leftrightarrow(3)$ in Theorem \ref{str G-req quasi-dom}; see also Remark \ref{12}.%noting that hypothesis $(\#)$ is not needed for the equivalence $(1)\Leftrightarrow(3)$ in Theorem \ref{str G-req quasi-dom}
\end{proof}

This corollary establishes strong G-regularity without requiring either isolated singularities or condition $(\#)$ as a hypothesis.
Like Proposition \ref{power}, the following proposition constructs dominant rings from a single regular sequence, but by a different method.

\begin{prop}\label{T23(8.8)}
Let $S$ be a regular local ring, $x_1,\ldots,x_n$ a regular system of parameters of $S$ where $n\ge 1$, and $a_1,\ldots,a_n, b_1,\ldots,b_n$ positive integers. Then $R=S/(x_1^{a_1},\ldots,x_n^{a_n})(x_1^{b_1},\ldots,x_n^{b_n})$ is a dominant local ring.
\end{prop}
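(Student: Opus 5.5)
Mirroring the proof of Proposition \ref{power}, I will realize $R$ as a quotient of a dominant ring by a regular sequence of elements lying outside the square of the maximal ideal. Then \cite[Theorem 5.6]{T23} transfers dominance down to $R$. By \cite[Corollary 5.8]{T23} I may first replace $S$ by its completion, since dominance passes along completion. The hypotheses are preserved under this step: $x_1,\ldots,x_n$ remains a regular system of parameters, and the defining ideal extends.

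\textbf{Step 1: the model ring.} Let $k$ be the residue field of $S$ (after completion). I take $A=S\llbracket y_1,\ldots,y_n\rrbracket/(y_1^{a_1},\ldots,y_n^{a_n})(y_1^{b_1},\ldots,y_n^{b_n})$. The map $S\to A$ is flat and local, and its closed fibre is
$$B=k\llbracket y_1,\ldots,y_n\rrbracket/(y_1^{a_1},\ldots,y_n^{a_n})(y_1^{b_1},\ldots,y_n^{b_n}).$$
I need $B$ to be dominant. The natural route is to check that $B$ is a Burch ring; ideals of the form $IJ$ with $I,J$ $\m$-primary in a regular local ring are typical Burch ideals. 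Then Example \ref{1}(1), i.e. \cite[Proposition 5.10]{T23} together with the Burch criterion used in the proof of Proposition \ref{power}, shows that $B$ is dominant. Since $S$ is regular, \cite[Proposition 7.4]{T23} then shows that $A$ is dominant.

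\textbf{Step 2: cutting down.} Put $I=(y_1-x_1,\ldots,y_n-x_n)A$. Then $A/I\cong R$. The ring $A$ is Cohen--Macaulay: $B$ is artinian, hence Cohen--Macaulay, and $S\to A$ is flat with regular base. Therefore $\grade I=\height I=\dim A-\dim R=n-0=n$, so $y_1-x_1,\ldots,y_n-x_n$ is an $A$-regular sequence. It remains to see that the image of $y_i-x_i$ does not lie in the square of the maximal ideal of $A_{i-1}:=A/(y_1-x_1,\ldots,y_{i-1}-x_{i-1})$. For this, note that the defining relations of $A_{i-1}$ all lie in degree $\ge 2$ in the variables $x_j,y_j$, since every product $y_j^{a_j}y_l^{b_l}$ has degree at least $2$. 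Hence the cotangent space of $A_{i-1}$ has basis the images of $x_1,\ldots,x_n,y_i,\ldots,y_n$, in which $y_i-x_i$ is nonzero. Applying \cite[Theorem 5.6]{T23} $n$ times shows that $R$ is dominant.

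\textbf{Main obstacle.} The delicate point is Step 1, namely verifying that $B$ is dominant when some exponents equal $1$. In that case the product ideal can be, for example, $(y_1,\ldots,y_n)^2$, which is still fine. I would verify the Burch condition directly: $\m(IJ:\m)\ne \m IJ$, which should hold because the socle-type element $y_1^{a_1+b_1-1}$ lies in $IJ:\m$. If this fails in some edge case, I would instead decompose $B$ via \cite[Proposition 6.3]{KT} as in Proposition \ref{power}.
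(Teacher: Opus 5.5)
\textbf{There is a genuine gap in Step 1.} Your Step 2 scaffolding is sound: the flat map $S\to A$, the regular sequence $y_i-x_i$, and the cotangent argument. The cotangent argument is better phrased as $(y^a)(y^b)\subseteq (y)^2$, so that $y_1-x_1,\ldots,y_n-x_n$ are linearly independent in $\m_A/\m_A^2$; the linear relations $y_j-x_j$ are of course not of degree $\ge 2$. But this only reduces the statement to the case $S=k\llbracket y\rrbracket$, $x_i=y_i$. The ring $B$ is itself an instance of the proposition, so Step 1 carries all the content, and there the argument fails.

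The ideal $J=(y_1^{a_1},\ldots,y_n^{a_n})(y_1^{b_1},\ldots,y_n^{b_n})$ is not Burch in general. Take $n=2$ and $a_1=a_2=b_1=b_2=2$, and write $\n=(y_1,y_2)$. Then $J=(y_1^4,y_1^2y_2^2,y_2^4)$, and the socle of $k\llbracket y_1,y_2\rrbracket/J$ is spanned by $y_1^3y_2$ and $y_1y_2^3$. Hence $\n(J:\n)=\n J+(y_1^4y_2,y_1^3y_2^2,y_1^2y_2^3,y_1y_2^4)=\n J$. The same happens for $a=(2,3)$, $b=(3,2)$, where $J=(y_1^5,y_1^2y_2^2,y_2^5)$. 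This $J$ has colength $16$, which is not divisible by $3$, so it is not the square of a parameter ideal, and Proposition~\ref{power} does not rescue it either.

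Your proposed socle element $y_1^{a_1+b_1-1}$ lies in $J:\n$ only when $\min\{a_i,b_i\}=1$ for every $i\ge 2$, because $y_iy_1^{a_1+b_1-1}\in J$ forces $a_i=1$ or $b_i=1$. So the failure is the generic case, not an edge case. The fallback to \cite[Proposition 6.3]{KT} does not supply an argument.

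\textbf{The missing idea is an exponent reduction that uses \cite[Theorem 5.6]{T23} in both directions.} Assume $a_1\le b_1$ and set $A=S\llbracket t\rrbracket/(t,x_2^{a_2},\ldots,x_n^{a_n})(x_1^{b_1-a_1}t,x_2^{b_2},\ldots,x_n^{b_n})$. For every $c>0$, the element $x_1^c-t$ lies outside the square of the maximal ideal of $A$. It is also $A$-regular; this is checked on $tA$ and $A/tA$, using a primary decomposition. Taking $c=a_1$ and $c=1$ gives the chain of equivalences: $R\cong A/(x_1^{a_1}-t)A$ is dominant iff $A$ is dominant iff $S/(x_1,x_2^{a_2},\ldots,x_n^{a_n})(x_1^{b_1-a_1+1},x_2^{b_2},\ldots,x_n^{b_n})$ is dominant. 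So you may replace $(a_1,b_1)$ by $(1,b_1-a_1+1)$, and symmetrically when $b_1<a_1$.

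After doing this for every index, $\min\{a_i,b_i\}=1$ for all $i$. Now your own socle computation works: $x_1^{a_1+b_1-1}\in I:\m$, and $x_1^{a_1+b_1}$ is a minimal generator of $I$. So $R$ is Burch, hence dominant by \cite[Proposition 5.10]{T23}. Since $R$ is artinian, the Burch condition can be checked directly in $S$, so the passage to the completion and to $S\llbracket y\rrbracket$ becomes unnecessary.
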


\begin{proof}
Assume $a_1\le b_1$. We put $A=S\llbracket t\rrbracket/(t,x_2^{a_2},\ldots,x_n^{a_n})(x_1^{b_1-a_1}t, x_2^{b_2},\ldots,x_n^{b_n})$. There is a short exact sequence $0\to tA\to A\to A/tA\to 0$, and we have
\begin{align*}
&A/tA\cong S/(x_2^{a_2},\ldots,x_n^{a_n})(x_2^{b_2},\ldots,x_n^{b_n}) \ \text{and} \\
& tA\cong A/(0:_A t)\cong S\llbracket t\rrbracket/(x_1^{b_1-a_1}t, x_2^{b_2},\ldots,x_n^{b_n}, x_1^{b_1-a_1}x_2^{a_2},\ldots, x_1^{b_1-a_1}x_n^{a_n}).
\end{align*}
For any $c>0$, the image of $x_1^c-t$ does not belong to the square of the maximal ideal of $A$.
Then $x_1^c-t$ is regular on both $A/tA$ and $tA$. Indeed, for ideals $C=(x_2^{a_2},\ldots,x_n^{a_n})$ and $D=(x_2^{b_2},\ldots,x_n^{b_n})$ of $S\llbracket t\rrbracket$, we have 
$$(x_1^{b_1-a_1}t, D, x_1^{b_1-a_1}C)=(x_1^{b_1-a_1}, D) \cap (t, C, D)$$
in $S\llbracket t\rrbracket$. 
A primary decomposition of the right-hand side shows that $x_1^c-t$ is regular on $tA$.
Applying \cite[Theorem 5.6]{T23} twice, with $c=a_1$ and $c=1$, shows that the following are equivalent: $R\cong A/(x_1^{a_1}-t)A$ is dominant, $A$ is dominant, and $A/(x_1-t)A\cong S/(x_1,x_2^{a_2},\ldots,x_n^{a_n})(x_1^{b_1-a_1+1},x_2^{b_2},\ldots,x_n^{b_n})$ is dominant. Thus we may assume that $a_1=1$.

Repeating the above operation, we may assume that, for every $i=1,\ldots,n$, either $a_i=1$ or $b_i=1$. In particular, by symmetry, we may assume $a_1=1$. Then $x_1^{b_1}$ belongs to $(I:_S \m)$, where $\m=(x_1,\ldots,x_n)$ and $I=(x_1^{a_1},\ldots,x_n^{a_n})(x_1^{b_1},\ldots,x_n^{b_n})S$. In fact, fix $i=1,\ldots,n$. If $a_i=1$, then $x_ix_1^{b_1} \in I$ is obvious; otherwise, $x_ix_1^{b_1}=(x_i^{b_i}x_1^{a_1})x_1^{b_1-1}\in I$ since $a_1=1=b_i$. Hence $x_1^{b_1+1}$ belongs to $\m(I:_S \m)$. On the other hand, $x_1^{b_1+1}$ is not in $\m I$ since it is one of the minimal generators of $I$. It follows that $R=S/I$ is Burch, and hence it is dominant by \cite[Proposition 5.10]{T23}.
\end{proof}

As in Corollary \ref{5.2}, the rings constructed in this proposition are strongly G-regular unless they are hypersurfaces.

\begin{cor}
In the notation of Proposition \ref{T23(8.8)}, the ring $R$ is strongly G-regular if and only if $n>1$.
\end{cor}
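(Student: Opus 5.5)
We follow the pattern of Corollary \ref{5.2}. Put $I=(x_1^{a_1},\ldots,x_n^{a_n})(x_1^{b_1},\ldots,x_n^{b_n})$, so $R=S/I$. Since $S$ is regular local, $R$ is artinian and admits a dualizing complex (its injective hull $\EE_R(k)$). Proposition \ref{T23(8.8)} says $R$ is dominant, so quasi-dominant by Example \ref{1}(1). Because $R$ is artinian, $\spec R=\{\m\}$, and $\sing R=\{\m\}$ since $I\subseteq\m^2$. So $\sing R=\ng R$ holds exactly when $R$ is not Gorenstein.

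Case $n=1$. Here $R=S/(x_1^{a_1+b_1})$ is a hypersurface, hence Gorenstein. It is singular because $a_1+b_1\ge2$. By Remark \ref{5}(1), a singular Gorenstein ring is not G-regular, so $R$ is not strongly G-regular.

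Case $n>1$. The main step is to show that $R$ is not Gorenstein. Then $\ng R=\{\m\}=\sing R$, and Corollary \ref{lqdom}, together with the equivalence (1)$\Leftrightarrow$(3) of Theorem \ref{str G-req quasi-dom} and Remark \ref{12}, gives strong G-regularity exactly as in the proof of Corollary \ref{5.2}. (Alternatively, since $R$ is artinian it has an isolated singularity, and $R$ is excellent, so Corollary \ref{sgrprprng}(2) applies directly.)

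Non-Gorensteinness is a monomial computation. Pass to the completion $k\llbracket x_1,\ldots,x_n\rrbracket/I$, which is legitimate because Gorensteinness is preserved and $I$ is generated by monomials in a regular system of parameters. Then $R$ is a monomial quotient, and its socle is spanned by the monomials $u\notin I$ with $x_iu\in I$ for all $i$. The ideal $I$ is $\m$-primary, generated by the products $x_i^{a_i}x_j^{b_j}$, so its complement is a finite order ideal of exponent vectors. We will exhibit two distinct maximal monomials outside $I$; such monomials are independent socle elements, so the socle has dimension at least $2$.

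Take $i\neq j$. Start from the monomials $x_i^{a_i+b_i-1}$ and $x_j^{a_j+b_j-1}$, which lie outside $I$ since a product $x_p^{a_p}x_q^{b_q}$ divides a pure power of $x_i$ only if $p=q=i$. Enlarge each of them to a maximal monomial not in $I$, multiplying only by other variables. The resulting maximal monomials are socle elements.

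The main obstacle is checking that these two maximal monomials really are distinct. A maximal monomial above $x_i^{a_i+b_i-1}$ keeps $x_i$-exponent $a_i+b_i-1$, because multiplying by $x_i$ already lands in $I$. It has $x_j$-exponent at most $\min(a_j,b_j)-1$, because $x_i^{a_i}x_j^{b_j}$ and $x_j^{a_j}x_i^{b_i}$ both lie in $I$. The same reasoning applies with $i$ and $j$ swapped. Since $a_j+b_j-1>\min(a_j,b_j)-1$, the two maximal monomials differ in their $x_j$-exponents, so they are distinct.
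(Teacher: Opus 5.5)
Your overall route matches the paper's. The $n=1$ case is identical. For $n>1$, the maximal monomial you build above $x_i^{a_i+b_i-1}$ is exactly the paper's socle element $u_i=x_i^{a_i+b_i-1}\prod_{j\ne i}x_j^{\min\{a_j,b_j\}-1}$. The paper uses all $n$ of these; you use two, which suffices. The final appeal to Theorem \ref{str G-req quasi-dom}, via Corollary \ref{lqdom} or Corollary \ref{sgrprprng}(2), is also fine. Your exponent bookkeeping is correct: the $x_i$-exponent is pinned at $a_i+b_i-1$, the $x_j$-exponent is at most $\min\{a_j,b_j\}-1$, and the two monomials are therefore distinct.

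The gap is the reduction ``pass to the completion $k\llbracket x_1,\ldots,x_n\rrbracket/I$''. Sending $X_i\mapsto x_i$ to get $\widehat S\cong k\llbracket X_1,\ldots,X_n\rrbracket$ is Cohen's structure theorem, and it is available only when $S$ contains a field. Proposition \ref{T23(8.8)} allows an arbitrary regular local ring. For example, take $S=\mathbb{Z}_p\llbracket y\rrbracket$ with $x_1=p$, $x_2=y$ and all exponents equal to $1$. Then $R=S/(p^2,py,y^2)$ has characteristic $p^2$, so it is not a $k$-algebra, and ``monomials outside $I$ form a $k$-basis'' has no meaning.

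Every non-membership claim you make rests on one fact: a monomial in the $x_i$ lies in an ideal generated by monomials in the $x_i$ only if one of those generators divides it. This covers $x_i^{a_i+b_i-1}\notin I$, the enlarged monomial staying outside $I$, and the independence of the two socle elements. Over a general regular local ring this is a real theorem about monomials in a regular sequence, and the paper imports it from \cite[Corollary 3]{KiSt}.

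With that criterion your argument closes as follows. Let $u\ne v$ be maximal monomials outside $I$. Neither divides the other, since $v\mid u$ with $v\ne u$ would give $u\in\m v\subseteq I$. So the criterion gives $u\notin I+Sv$ and $v\notin I+Su$. Now suppose $\alpha u+\beta v\in I$ with $\alpha,\beta\in S$. Then $\alpha\in\m$, or else $u\in I+Sv$. Hence $\alpha u\in\m u\subseteq I$, which forces $\beta v\in I$ and so $\beta\in\m$. This gives $\dim_k(0:_R\m)\ge2$. You should either restrict to the equicharacteristic case or replace the completion step by this divisibility criterion.
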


\begin{proof}
If $n=1$, then $R=S/(x_1^{a_1+b_1})$ is singular and Gorenstein, and thus it is not strongly G-regular. Suppose $n>1$. We set $I=(x_1^{a_1},\ldots,x_n^{a_n})(x_1^{b_1},\ldots,x_n^{b_n})S$. 
Let $\m$ be the maximal ideal of $S$ and let $k$ be the residue field of $S$. For each $i=1,\ldots,n$, set
$$
u_i=x_i^{a_i+b_i-1}\prod_{j\ne i}x_j^{\min\{a_j,b_j\}-1}.
$$
We first observe that $u_i\notin I$. Indeed, if $p\ne i$, then the exponent of $x_p$ in $u_i$ is less than $a_p$; if $q\ne i$, then the exponent of $x_q$ in $u_i$ is less than $b_q$; and the exponent of $x_i$ in $u_i$ is less than $a_i+b_i$. Hence no monomial generator $x_p^{a_p}x_q^{b_q}$ of $I$ divides $u_i$.
We show that $\m u_i\subseteq I$. Clearly, $x_i u_i\in I$. Let $j\ne i$. If $a_j\le b_j$, then $x_j u_i$ is divisible by $x_j^{a_j}x_i^{b_i}$, while if $b_j<a_j$, then $x_j u_i$ is divisible by $x_i^{a_i}x_j^{b_j}$. Hence $x_j u_i\in I$ for every $j=1,\ldots,n$. 
It follows that $u_1,\ldots,u_n$ belong to $(I:_S\m)$, and their images in $(0:_R\m)$ are linearly independent over $k$. Indeed, for all $i=1,\ldots,n$, since $u_i$ is divisible neither by any $u_j$ with $j\ne i$ nor by any monomial in $x_1,\ldots,x_n$ that belongs to $I$, \cite[Corollary 3]{KiSt} shows that $u_i\notin I+ \sum_{j\ne i} S u_j$. Thus
$$
\dim_k \operatorname{Soc} R=\dim_k (0:_R\m)\ge n>1,
$$
which shows that $R$ is not Gorenstein.
As $R$ is an artinian dominant local ring with $\ng R=\sing R=\spec R$, the implication (7)$\Rightarrow$(1) of Theorem \ref{str G-req quasi-dom} yields that $R$ is strongly G-regular.
\end{proof}

\begin{rem}
\begin{enumerate}[(1)]
\item Proposition \ref{power} provides a related positive result that complements \cite[Proposition 7.7]{T23}. Proposition \ref{T23(8.8)} concerns a special type of product of two ideals generated by regular sequences, as in \cite[Proposition 8.8]{T23}, but does not require the concatenation of the two sequences to be regular. In particular, these propositions give families of dominant rings that are not fully covered by previous results, such as those in \cite{T23,KT}.

\item Whether every Golod local ring is dominant is an interesting open problem; see \cite[Question 9.4]{T23}. Hence it is also natural to ask whether every Golod local ring admitting a dualizing complex is quasi-dominant. Since every Cohen--Macaulay local ring of minimal multiplicity is Golod by \cite[Example 5.2.8]{Avr}, Example \ref{not deform sgr} provides an example of a Golod ring that is not strongly G-regular. On the other hand, it is known that every Golod local ring that is not a hypersurface is G-regular; see \cite[Examples 3.5]{AM}. 

The G-regularity of the rings appearing in Corollary \ref{5.2} can also be deduced from Golod theory. Indeed, the quotient of a regular local ring by the $s$-th power of a complete intersection ideal is Golod for every $s\ge2$; see \cite[Corollary 3.3]{GS} for instance. The conclusion obtained in this paper is stronger: Proposition \ref{power} shows that these rings are locally dominant, and Corollary \ref{5.2} shows that they are strongly G-regular when $n>1$. It follows from \cite[Corollary 1.2]{DDS22} that the quotient of a polynomial ring in three variables by the product of two proper monomial ideals is Golod. Proposition \ref{T23(8.8)} shows that, when the monomial ideals are parameter ideals, its completion is dominant.
\end{enumerate}
\end{rem}

The examples of commutative rings that are G-regular but not strongly G-regular given in Corollary \ref{11} arise from the failure of G-regularity to be preserved under localization. However, any counterexample to Question \ref{main ques}(1) obtained in this way necessarily has positive Krull dimension. Question \ref{main ques}(1) has its origin in \cite[Problem C]{Che17}, which was posed for artin algebras and had remained open in that setting. Thus, a counterexample relying on loci or localization would not resolve the original problem in its intended setting. We therefore study strong G-regularity for artinian rings by developing a method for constructing nontrivial totally acyclic complexes. The following proposition gives a sufficient condition for a ring not to be strongly G-regular.

\begin{prop}\label{twosqzeromaps}
Let $G$ be a projective $R$-module, and let $a,b:G\to G$ be $R$-endomorphisms satisfying $a^2=0=b^2$ and $ab=ba$. Assume that the following two conditions hold.
\begin{enumerate}[{\rm (1)}]
\item There is an endomorphism $g$ of the additive group $G$ such that $ag+ga=\id_G$ and $bg=gb$.
\item For every free $R$-module $H$, there exists an endomorphism $h$ of the additive group $\Hom_R(G,H)$ such that $b^*h+hb^*=\id_{\Hom_R(G,H)}$ and $a^*h=ha^*$, where $a^*=\Hom_R(a,H)$ and $b^*=\Hom_R(b,H)$.
\end{enumerate}
Set $F=G\oplus G$ and define an $R$-endomorphism $c$ of $F$ by
$$
c=
\begin{pmatrix}
a&b\\
b&-a
\end{pmatrix}.
$$
Then the one-periodic complex $X:=(\cdots\xrightarrow{c}F\xrightarrow{c}F\xrightarrow{c}F\xrightarrow{c}\cdots)$ is totally acyclic. In particular, if $\Im c$ is not projective, then $R$ is not strongly G-regular.
\end{prop}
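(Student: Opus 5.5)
First I check that $X$ is a complex. Since $a^2=b^2=0$ and $ab=ba$,
$$
c^2=\begin{pmatrix}a^2+b^2 & ab-ba\\ ba-ab & b^2+a^2\end{pmatrix}=0 .
$$
The rest of the proof builds explicit additive contracting homotopies, first for $X$ and then for $\Hom_R(X,H)$ with $H$ free. This settles exactness, and then total acyclicity after reducing from arbitrary projectives to free modules.

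\emph{Exactness of $X$.} Using the $g$ from condition (1), I set
$$
s=\begin{pmatrix} g & 0\\ 0 & -g\end{pmatrix}
$$
as an additive endomorphism of $F$. Then
$$
cs+sc=\begin{pmatrix} ag+ga & -bg+gb\\ bg-gb & ag+ga\end{pmatrix}=\id_F ,
$$
by $ag+ga=\id_G$ and $bg=gb$. If the off-diagonal signs do not cancel, I will change the sign pattern of $s$, or use $\left(\begin{smallmatrix}0&g\\ g&0\end{smallmatrix}\right)$ with the roles of $a$ and $b$ adapted. The aim is an additive map $s$ with $cs+sc=\id_F$. Such an $s$ gives exactness at once: if $cx=0$ then $x=c(sx)+s(cx)=c(sx)$. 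Linearity of $s$ over $R$ is not needed, since homology is only a statement about abelian groups.

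\emph{Exactness of $\Hom_R(X,H)$ for free $H$.} Write $(-)^*=\Hom_R(-,H)$. The dual complex is one-periodic with differential
$$
c^*=\begin{pmatrix} a^* & b^*\\ b^* & -a^*\end{pmatrix}
$$
on $G^*\oplus G^*$ (transposed, which changes nothing because the matrix is symmetric). The roles of $a$ and $b$ are now swapped: condition (2) gives $b^*h+hb^*=\id$ and $a^*h=ha^*$. So I take
$$
t=\begin{pmatrix} 0 & h\\ h & 0\end{pmatrix}
$$
or a suitable signed variant. Then the diagonal entries of $c^*t+tc^*$ are $b^*h+hb^*=\id$, and the off-diagonal entries are $\pm(a^*h-ha^*)=0$. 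Choosing the signs correctly is the only fiddly point, and I expect it to work, since the matrix shape is symmetric in $a$ and $b$ up to the sign in the corner. This gives exactness of $\Hom_R(X,H)$ for every free $H$.

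\emph{Arbitrary projective $P$ and the final claim.} Write $P$ as a direct summand of a free module $H$. Then $\Hom_R(X,P)$ is a direct summand of $\Hom_R(X,H)$ as a complex, so it is exact as well. Hence $X$ is totally acyclic, and $\Im c$ is Gorenstein projective by definition. If $\Im c$ is not projective, then $\GProj R\ne\Proj R$, i.e.\ $R$ is not strongly G-regular. The main obstacle is only the sign bookkeeping in the two homotopies. Everything else is formal, because the hypotheses were designed precisely to provide these homotopies.
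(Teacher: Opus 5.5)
Your proposal is correct and follows the paper's proof essentially verbatim. It uses the same additive homotopies, $\begin{pmatrix} g&0\\ 0&-g\end{pmatrix}$ for $X$ and $\left(\begin{smallmatrix}0&h\\ h&0\end{smallmatrix}\right)$ for $\Hom_R(X,H)$, the same direct-summand reduction from projective to free modules, and your sign hedging is unnecessary since both homotopies work exactly as you wrote them (the off-diagonal entries come out as $\pm(gb-bg)$ and $\pm(a^*h-ha^*)$).
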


\begin{proof}
The assumptions $a^2=0=b^2$ and $ab=ba$ yield
$$
c^2=
\begin{pmatrix}
a^2+b^2&ab-ba\\
ba-ab&a^2+b^2
\end{pmatrix}
=0.
$$
Using $ag+ga=\id_G$ and $bg=gb$, we obtain
$$
c
\begin{pmatrix}
g&0\\
0&-g
\end{pmatrix}
+
\begin{pmatrix}
g&0\\
0&-g
\end{pmatrix}c
=
\begin{pmatrix}
ag+ga&-bg+gb\\
bg-gb&ag+ga
\end{pmatrix}
=\id_F.
$$
This equality shows that $X$ is acyclic.

Let $H$ be a free $R$-module. Under the natural isomorphism $\Hom_R(F,H)\cong\Hom_R(G,H)\oplus\Hom_R(G,H)$, the map $c^*=\Hom_R(c,H)$ is represented by
$$
c^*=
\begin{pmatrix}
a^*&b^*\\
b^*&-a^*
\end{pmatrix}.
$$
It follows from $b^*h+hb^*=\id_{\Hom_R(G,H)}$ and $a^*h=ha^*$ that
$$
c^*
\begin{pmatrix}
0&h\\
h&0
\end{pmatrix}
+
\begin{pmatrix}
0&h\\
h&0
\end{pmatrix}
c^*
=
\begin{pmatrix}
b^*h+hb^*&a^*h-ha^*\\
-a^*h+ha^*&b^*h+hb^*
\end{pmatrix}
=\id_{\Hom_R(F,H)}.
$$
Hence $\Hom_R(X,H)$ is acyclic. For every projective $R$-module $L$, the complex $\Hom_R(X,L)$ is a direct summand of $\Hom_R(X,H')$ for some free $R$-module $H'$, and is therefore acyclic. Thus $X$ is totally acyclic. If $\Im c$ is not projective, then $\Im c\in\GProj R\setminus\Proj R$, and hence $R$ is not strongly G-regular.
\end{proof}

The idea behind Proposition \ref{twosqzeromaps} is to realize acyclicity and dual acyclicity separately and then combine them. Since $g$ is only an endomorphism of the underlying additive group of $G$, the equality $ag+ga=\id_G$ does not necessarily make the one-periodic complex determined by $a$ split exact as a complex of $R$-modules. It does, however, imply that this complex is acyclic. Likewise, for every free $R$-module $H$, the equality $b^*h+hb^*=\id_{\Hom_R(G,H)}$ implies that the one-periodic complex determined by $b^*$ is acyclic, although it need not be split exact as a complex of $R$-modules.
Proposition \ref{twosqzeromaps} shows that, when these two pieces of acyclicity data satisfy the appropriate compatibility conditions, they can be combined to construct a totally acyclic complex. The key idea of the following theorem is that, when the pairs $(a,g)$ and $(b,h)$ are constructed using the two different tensor factors, and the tensor product structure naturally ensures the required commutativity.

\begin{thm}\label{hanrei}
Let $k$ be a field, and let $(A,\m,k)$ and $(B,\n,k)$ be artinian local $k$-algebras that are not fields. Assume that $\m^2=0=\n^2$, and put $R=A\otimes_kB$. Then $R$ is not strongly G-regular. Moreover, if $\dim_k\m\ge2$ and $\dim_k\n\ge2$, then $R$ is G-regular and satisfies $\TR$ but is not strongly G-regular.
\end{thm}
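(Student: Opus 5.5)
The plan is to apply Proposition \ref{twosqzeromaps} with the two square-zero maps coming from the two tensor factors. We would take $a=\alpha\otimes_k\id$ and $b=\id\otimes_k\beta$, acting on a free module of the form $G=R\otimes_k(V\otimes_k W)$. Here $\alpha$ is a square-zero $A$-linear differential on a free $A$-module and $\beta$ is a square-zero $B$-linear differential on a free $B$-module. Then $a^2=0=b^2$ and $ab=ba$ hold automatically. Commutation of the homotopies is also automatic: if the homotopy $g$ is $\gamma\otimes\id$, with $\gamma$ only $k$-linear on the $A$-factor, then $bg=gb$, since the two act on different tensor factors. The same holds for $h$ built from a $k$-linear contraction on the dualised $B$-factor.

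\textbf{Step 1 (acyclicity from the $A$-side).} Since $\m^2=0$ and $A$ is not a field, $A$ is singular. We need a free $A$-module $P$ with a square-zero $A$-linear $\alpha$ such that the one-periodic complex $(P,\alpha)$ is $k$-linearly contractible. The first attempt is to fold an unbounded exact complex of free $A$-modules into a one-periodic one, taking $P=\bigoplus_n C_n$ with $\alpha$ the total differential. The exact complex $C$ itself would come from the fact that $\K_{\rm ac}(\Proj A)\neq0$ when $A$ is not regular (Iyengar--Krause). Since any exact complex of $k$-vector spaces is contractible over $k$, the $k$-linear $g$ exists. After tensoring with $B$, the map $ag+ga=\id$ persists.

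\textbf{Step 2 (dual acyclicity from the $B$-side).} We need $(Q,\beta)$ over $B$ such that $\Hom_B(Q,H)$ is exact with a $k$-linear homotopy commuting with $a^*$, for every free $H$. The route I would try first is to make $Q$ degreewise finitely generated, so that $\Hom_R(G,H)$ becomes a direct sum (or product) of copies of $\Hom(Q,B)\otimes(\text{$A$-part})$. Then exactness of $\Hom_B(Q,B)$ over $k$ suffices, and the contraction tensors with $\id$ on the $A$-factor. The natural candidate is to dualise Step 1 on the $B$-side: take an exact complex whose $B$-dual is exact, i.e.\ a complex of free $B$-modules that is possibly non-exact itself but is ``dual-acyclic''. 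Here the second factor is essential, because the $A$-side supplies acyclicity while the $B$-side supplies it only after dualising. I expect this to be the main obstacle: producing over a radical-square-zero ring a free complex $Q$ with $\Hom_B(Q,\text{free})$ exact, with control over infinite direct sums in $H$.

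\textbf{Step 3 (non-projectivity of $\Im c$).} We show $\Im c$ is not projective by observing that $c$ has entries in $\m_R$ for minimal choices. Equivalently, $c\otimes_R k=0$, so $\Im c\subseteq\m_R F$, and $\Im c$ would then be projective only if the complex split. This contradicts minimality since $F\neq0$. Proposition \ref{twosqzeromaps} then shows that $R$ is not strongly G-regular.

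\textbf{Step 4 (the second statement).} When $\dim_k\m,\dim_k\n\ge2$, the ring $R$ has radical cube zero, local Hilbert series $1+(e+f)t+eft^2$, and is non-Gorenstein, since its socle is $\m\otimes\n$ of dimension $ef\ge4$. For G-regularity and $\TR$ we would verify directly that a finitely generated $M$ with $\Ext^{>0}_R(M,R)=0$ is free. One way is via Koszulness and Poincaré/Bass series of $R=A\otimes_kB$: the series factor as products, and one can apply the known results for radical-cube-zero rings showing that such $M$ force $\Ext$ to vanish only when $M$ is free, unless $R$ is Gorenstein. Finally, by Remark \ref{5}(2) this gives G-regularity and $\TR$ at once.
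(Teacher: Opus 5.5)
Your architecture matches the paper's. You take $G=P\otimes_kQ$, with $a$ acting on a free $A$-module $P$ (supplying acyclicity) and $b$ acting on a free $B$-module $Q$ (supplying dual acyclicity). The tensor structure then gives $ab=ba$, $bg=gb$ and $a^*h=ha^*$ in Proposition~\ref{twosqzeromaps}.

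Steps~1 and~3 are fine. One caveat: Step~3 needs $\alpha$ to be minimal, and an abstract existence result such as Iyengar--Krause does not give that. The explicit map $e_{\lambda(r,i)}\mapsto x_ie_r$ on $A^{(\mathbb{N})}$, with $\lambda\colon\mathbb{N}\times\{1,\dots,p\}\to\mathbb{N}$ a bijection, is exact, has image $\m A^{(\mathbb{N})}$, and so is minimal.

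\textbf{The gap is Step~2.} This is where the whole difficulty lies, and you leave it open. Worse, the route you propose provably fails when $q=\dim_k\n\ge2$, which is exactly the case needed for the counterexample. Suppose $Q$ is degreewise finitely generated and $\Hom_B(Q,B)$ is acyclic.
\begin{enumerate}
\item Then $\Hom_B(Q,B)$ is an acyclic complex of finitely generated free $B$-modules. Write each image as $N_i'\oplus(\text{free})$ with $N_i'$ having no free summand.
\item Schanuel and Krull--Schmidt give $N_i'\cong k^{t_i}$ with $t_{i+1}=qt_i$, because $\Omega_B(k^t)\cong\n^{\oplus t}\cong k^{qt}$.
\item So every $t_i$ is divisible by every power of $q$, hence $t_i=0$. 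All images are free, so $\Hom_B(Q,B)$ is contractible.
\item Since $Q\cong\Hom_B(\Hom_B(Q,B),B)$ degreewise, $Q$ is contractible too, with a $B$-linear contraction $\sigma$.
\item Then $\tilde\sigma=\id_P\otimes\sigma$ is $R$-linear and commutes with $a$, and $\left(\begin{smallmatrix}0&\tilde\sigma\\ \tilde\sigma&0\end{smallmatrix}\right)$ is an $R$-linear contraction of $X$. Hence $\Im c$ is projective.
\end{enumerate}
So the $B$-side complex must be infinitely generated in each degree. You then have to build a contraction on the product $\Hom_R(G,H)\cong H^{\mathbb{N}^2}$ for an arbitrary free $H$.

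\textbf{The missing idea} is to take $\beta$ to be the ``transpose'' of the $A$-side map. Set $Q=B^{(\mathbb{N})}$ and $\beta(e_s)=\sum_{j=1}^q y_je_{\mu(s,j)}$ for a bijection $\mu\colon\mathbb{N}\times\{1,\dots,q\}\to\mathbb{N}$. For $q\ge2$ this complex is not exact, but its dual behaves like $a$.
\begin{itemize}
\item Write $H=K\otimes_kB=K\oplus Ky_1\oplus\cdots\oplus Ky_q$ with $K=A^{(\Lambda)}$.
\item Each $\phi\in\Hom_R(G,H)$ then has unique coordinates $\phi(e_{r,s})=u_{r,s}\otimes1+\sum_jv_{r,s,j}\otimes y_j$ with $u_{r,s},v_{r,s,j}\in K$.
\item Define $h(\phi)(e_{r,\mu(s,j)}):=v_{r,s,j}\otimes1$.
\end{itemize}
A direct check on the basis gives $b^*h+hb^*=\id$ and $a^*h=ha^*$. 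Because $h$ is defined coordinatewise through an $A$-module decomposition of $H$, no finiteness of $H$ is needed.

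\textbf{On Step~4}, the dichotomy is not ``unless $R$ is Gorenstein''. Koszulness is also no obstruction, since $A\otimes_kB$ is Koszul. The criterion the paper uses (Atkins--Vraciu, Example~2.5(a)) is numerical. What you must check is $\dim_k\operatorname{Soc}R=\dim_k(\m\otimes_k\n)=pq\neq p+q-1=\edim R-1$ when $p,q\ge2$. With that check, Remark~\ref{5}(2) gives G-regularity and $\TR$.
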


\begin{proof}
Put $p=\dim_k\m$ and $q=\dim_k\n$, and fix $k$-bases $\{x_1,\ldots,x_p\}$ of $\m$ and $\{y_1,\ldots,y_q\}$ of $\n$. We also fix bijections
$$
\lambda:\mathbb{N}\times\{1,\ldots,p\}\xrightarrow{\cong}\mathbb{N}
\quad\text{and}\quad
\mu:\mathbb{N}\times\{1,\ldots,q\}\xrightarrow{\cong}\mathbb{N}.
$$
Let $G$ be the free $R$-module with basis $\{e_{r,s}\}_{(r,s)\in\mathbb{N}^2}$.
Define $R$-endomorphisms $a,b$ of $G$ by
$$
a(e_{\lambda(r,i),s})=x_ie_{r,s}
\quad\text{and}\quad
b(e_{r,s})=\sum_{j=1}^q y_je_{r,\mu(s,j)}.
$$
Since $\lambda$ is bijective, the first formula defines $a$ on every basis element of $G$. The equalities $\m^2=0=\n^2$ give $a^2=0=b^2$, and $ab=ba$ because $a$ and $b$ act on different indices; see
$$
ba(e_{\lambda(r,i),s})=\sum_{j=1}^q x_iy_je_{r,\mu(s,j)}=ab(e_{\lambda(r,i),s}).
$$

We first verify the condition (1) in Proposition \ref{twosqzeromaps}. Since $R=B\oplus x_1B\oplus\cdots\oplus x_pB$ as a free $B$-module, 
$\{e_{r,s}, x_1e_{r,s}, \ldots, x_pe_{r,s} \mid (r,s)\in\mathbb{N}^2\}$ is a $B$-basis of $G$. So we can define a $B$-linear map $g:G\to G$ by 
$$
g(e_{r,s})=0
\quad\text{and}\quad
g(x_ie_{r,s})=e_{\lambda(r,i),s}.
$$
A direct check on the corresponding $B$-basis of $G$ gives $ag+ga=\id_G$.
Moreover, as $g$ changes only the first index, whereas $b$ changes only the second index, we obtain $gb=bg$ as $B$-linear maps; see
\begin{align*}
&gb(e_{r,s})=\sum_{j=1}^q y_j g(e_{r,\mu(s,j)})=0=bg(e_{r,s}), \\
&gb(x_ie_{r,s})=\sum_{j=1}^q y_j g(x_ie_{r,\mu(s,j)})=\sum_{j=1}^q y_je_{\lambda(r,i),\mu(s,j)}=b(e_{\lambda(r,i),s})=bg(x_ie_{r,s}).
\end{align*}

We next verify (2). Let $H=R^{\oplus\Lambda}$ be a free $R$-module and put $K=A^{\oplus\Lambda}$. As a free $A$-module, we have $H=K\otimes_k B=(K\otimes_k k)\oplus (K\otimes_k ky_1)\oplus\cdots\oplus (K\otimes_k ky_q)$. 
Thus, for every $\phi\in\Hom_R(G,H)$, there are unique elements $u_{r,s},v_{r,s,1},\ldots,v_{r,s,q}\in K$ such that
$$
\phi(e_{r,s})=(u_{r,s}\otimes 1)+\sum_{j=1}^q (v_{r,s,j} \otimes y_j)
$$
for all $(r,s)\in\mathbb{N}^2$. Since $\mu$ is bijective and $\{e_{r,s}\}_{(r,s)\in\mathbb{N}^2}$ is an $R$-basis of $G$, the formulas
$$
h(\phi)(e_{r,\mu(s,j)})=v_{r,s,j}\otimes 1
$$
define an $R$-homomorphism $h(\phi): G\to H$. This correspondence $\phi\mapsto h(\phi)$ yields an additive endomorphism $h$ of $\Hom_R(G,H)$.
Set $a^*=\Hom_R(a,H)$ and $b^*=\Hom_R(b,H)$. Since $\n^2=0$, we have
$$
(b^*\phi)(e_{r,s})=\phi(b(e_{r,s}))=\sum_{j=1}^q y_j\phi(e_{r,\mu(s,j)})=\sum_{j=1}^q u_{r,\mu(s,j)}\otimes y_j.
$$
It follows from the definition of $h$ that
$$
(b^*h(\phi))(e_{r,s})=h(\phi)(b(e_{r,s}))=\sum_{j=1}^q y_j h(\phi)(e_{r,\mu(s,j)})=
\sum_{j=1}^q v_{r,s,j}\otimes y_j \quad\text{and}\quad h(b^*(\phi))(e_{r,\mu(s,j)})=u_{r,\mu(s,j)} \otimes 1.
$$
Therefore, for every $R$-homomorphism $\phi$, the maps $\phi$ and $(b^*h+hb^*)(\phi)$ agree on an $R$-basis $\{e_{r,s}\}_{(r,s)\in\mathbb{N}^2}$ of $G$, and hence they agree as $R$-homomorphisms. This means $b^*h+hb^*=\id_{\Hom_R(G,H)}$.

For all $r,s\in\mathbb{N}$, $1\le i\le p$, and $1\le j\le q$, the $K$-coefficient of $y_j$ in
$$
(a^*\phi)(e_{\lambda(r,i),s})=\phi(a(e_{\lambda(r,i),s}))=\phi(x_ie_{r,s})=x_i\phi(e_{r,s})=(x_iu_{r,s}\otimes 1)+\sum_{k=1}^q (x_iv_{r,s,k} \otimes y_k)
$$
is $x_iv_{r,s,j}$. By the definition of $h$, we have $(ha^*(\phi))(e_{\lambda(r,i),\mu(s,j)})=x_iv_{r,s,j} \otimes 1$. On the other hand, 
$$
(a^*h(\phi))(e_{\lambda(r,i),\mu(s,j)})=h(\phi)(a(e_{\lambda(r,i),\mu(s,j)}))=h(\phi)(x_ie_{r,\mu(s,j)})=x_ih(\phi)(e_{r,\mu(s,j)})=x_iv_{r,s,j}\otimes 1.
$$
The bijectivity of $\lambda$ and $\mu$ now yields $a^*h(\phi)=ha^*(\phi)$, and thus $a^*h=ha^*$.

Both conditions in Proposition \ref{twosqzeromaps} are satisfied.
Set $F=G\oplus G$, let
$$
c=
\begin{pmatrix}
a&b\\
b&-a
\end{pmatrix}:F\to F,
$$
and put $M=\Im c$. Proposition \ref{twosqzeromaps} shows that $M$ is Gorenstein projective.
The ring $R$ is local with maximal ideal $I=\m R+\n R$, and one has $I^2\ne 0$ and $I^3=0$. 
By the definition of $a$ and $b$, we have $0\ne M\subseteq IF$, which implies that $I^2M\subseteq I^3F=0$. Therefore, $M$ is not projective, and $R$ is not strongly G-regular.

Finally, $\dim_k (I/I^2)=p+q$ and $\dim_k \operatorname{Soc} R=\dim_k I^2=pq$. 
If $p,q\ge2$, then $pq+1\ne p+q$ as
$$
(pq+1)-(p+q)=(p-1)(q-1)>0.
$$
By \cite[Example 2.5(a)]{AV} and Remark \ref{5}(2), $R$ is G-regular and satisfies $\TR$.
\end{proof}

\begin{rem}
\begin{enumerate}[(1)]
\item When $\dim_k\m=2=\dim_k\n$, it was already shown in \cite[Remark 9.8]{T23} that the ring in Theorem \ref{hanrei} is G-regular but not dominant. The above theorem is the first to show that this ring is not strongly G-regular. This shows that, for artinian local rings, strong G-regularity in the sense of this paper is strictly stronger than strong G-regularity in the sense of \cite{AV}; see Remark \ref{5}.

\item Moreover, over a commutative G-regular local ring, the ring itself is the only finitely generated indecomposable Gorenstein projective module up to isomorphism. Thus, the ring in Theorem \ref{hanrei} is CM-finite in the sense of \cite{Che17}, but admits a Gorenstein projective module that cannot be expressed as a direct sum of finitely generated Gorenstein projective modules. Therefore, it provides a counterexample to \cite[Problem A]{Che17}. As noted immediately before the statement of \cite[Problem B]{Che17}, it is equivalent to \cite[Problem A]{Che17}; hence, \cite[Problem B]{Che17} is also settled in the negative. The rings in Theorem \ref{hanrei}, which provide these counterexamples, are also weakly Gorenstein in the sense of Ringel and Zhang \cite{RZ}.
\end{enumerate}
\end{rem}

\begin{cor}\label{counterex}
Even for artinian local rings, Question \ref{main ques}(1) has a negative answer. In particular, all of the problems (A), (B), and (C) given in Section 1 (the same as \cite[Problems A,B,C]{Che17}) have negative answers.
\end{cor}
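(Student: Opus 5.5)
This is a direct specialization of Theorem \ref{hanrei}. Fix any field $k$ and take $A=B=k[x,y]/(x,y)^2$. Then $A$ is an artinian local $k$-algebra that is not a field, with maximal ideal $\m=(x,y)$. We have $\m^2=0$ and $\dim_k\m=2$, and similarly for $B$ with $\dim_k\n=2$.

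Put $R=A\otimes_kB\cong k[x_1,x_2,y_1,y_2]/\bigl((x_1,x_2)^2+(y_1,y_2)^2\bigr)$. This is a commutative artinian local ring, and it is a finite-dimensional $k$-algebra, hence an artin algebra. Theorem \ref{hanrei} applies with $p=q=2$ and gives three facts: $R$ is G-regular, $R$ satisfies $\TR$, and $R$ is not strongly G-regular. The non-projective Gorenstein projective module witnessing the last fact is $\Im c$ from Proposition \ref{twosqzeromaps}, which is infinitely generated.

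This answers Question \ref{main ques}(1) negatively for artinian local rings. It remains to translate the result into Chen's language and derive (A), (B), (C).

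For (C): G-regularity coincides with CM-freeness. The ring $R$ is CM-free, but $\GProj R\neq\Proj R$, so (C) fails.

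For (A): over a commutative G-regular local ring, $R$ is the only indecomposable finitely generated Gorenstein projective module up to isomorphism, so $R$ is CM-finite. Suppose every Gorenstein projective $R$-module were a direct sum of finitely generated Gorenstein projective modules. Each summand would then be free, so every Gorenstein projective module would be projective. This contradicts the previous paragraph, so the ``only if'' direction of (A) fails.

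For (B): the introduction notes, citing Beligiannis \cite{B98}, that the ``only if'' part of (A) is equivalent to (B). More concretely, CM-finite together with virtually Gorenstein would force every Gorenstein projective module to be a direct sum of finitely generated ones. So $R$ is CM-finite but not virtually Gorenstein, and (B) fails.

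Finally, $\TR$ is exactly weak Gorensteinness in the sense of Ringel--Zhang, so $R$ is moreover weakly Gorenstein.

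There is no real obstacle here. The only point needing care is the standard fact that a commutative G-regular local ring has $R$ as its unique indecomposable finitely generated Gorenstein projective module, which is what makes $R$ CM-finite.
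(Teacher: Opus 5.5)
Your proposal is correct and follows essentially the paper's route. You specialize Theorem \ref{hanrei} to $p=q=2$; your choice $A=B=k[x,y]/(x,y)^2$ is exactly the case singled out in the paper's closing remark. You then read off (C) from G-regularity $=$ CM-freeness, (A) from $R$ being CM-finite while having a non-projective Gorenstein projective module, and (B) via Beligiannis's theorem, which is the same reasoning the paper gives in the remark preceding the corollary.
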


\begin{ac}
The authors thank Toshinori Kobayashi for giving valuable comments.
Kaito Kimura was supported by JSPS Overseas Research Fellowships.
Yuki Mifune was supported by Grant-in-Aid for JSPS Fellows 25KJ1386.
Yuya Otake was supported by Grant-in-Aid for JSPS Fellows 26KJ0067.
Ryo Takahashi was supported by JSPS Grant-in-Aid for Scientific Research 26K00599.
\end{ac}

%%%%%%%%%%%%%%%%%%%%%%%%%%%%%%%%%%%%%%%%%

\end{document}